\numberwithin{equation}{section}
\newtheorem{Lemma}{Lemma}[section]
\newtheorem{Cor}[Lemma]{Corollary}
\newtheorem{Th}[Lemma]{Theorem}
\newtheorem{Prop}[Lemma]{Proposition}
\theoremstyle{definition}
\newtheorem{Def}[Lemma]{Definition}
\theoremstyle{remark}
\newtheorem{Rk}[Lemma]{Remark}
\newtheorem{Rks}[Lemma]{Remarks}
\newcommand{\be}{\begin{equation}}
\newcommand{\ee}{\end{equation}}
\newcommand{\baa}{\begin{array}}
\newcommand{\eaa}{\end{array}}
\newcommand{\ba}{\begin{eqnarray}}
\newcommand{\ea}{\end{eqnarray}}
\newcommand{\ds}{\displaystyle}
\renewcommand\phi{\varphi}
\def\he{h^\star}
\def\ph{\phi_h}
\def\pe{\phi_{\he}}
\def\psihe{\psi_{h, e}}
\def\epsilon{\varepsilon}
\def\O{\Omega}
\def\Rm{\R^{N-1}}
\def\pn{\partial_\nu}
\def\pO{\partial \O}
\def\qtext#1{\quad\text{#1}\quad}
\def\pref#1{{\rm (\ref{#1})}}
\def\S{{\mathbb S}}
\def\Sm{\S^{N-2}}
\def\R{{\mathbb R}}
\def\N{{\mathbb N}}
\def\L{{\mathcal L}}
\def\trait (#1) (#2) (#3){\vrule width #1pt height #2pt depth #3pt}
\def\fin{\hfill\trait (0.1) (5) (0) \trait (5) (0.1) (0) \kern-5pt \trait (5) (5) (-4.9) \trait (0.1) (5) (0)}
\def\todo#1 {\marginpar{\textcolor{red}{$\Rightarrow$#1}}}
\newcommand{\bee}{\begin{equation*}}
\newcommand{\eee}{\end{equation*}}
\newcommand{\bc}{\begin{cases}}
\newcommand{\ec}{\end{cases}}
\title{\bf Front blocking and propagation in cylinders with varying cross section}
\author[1]{Henri Berestycki}
\author[2]{Juliette Bouhours}
\author[1,3]{Guillemette Chapuisat}
\affil[1]{Ecole des Hautes Etudes en Sciences Sociales et CNRS, Centre d'Analyse et de Math\'ematiques Sociales, UMR 8557, 190-198, avenue de France 75244 Paris Cedex 13, France}
\affil[2]{Sorbonne Universit\'es, UPMC Univ Paris 06, et CNRS, Laboratoire Jacques-Louis Lions, UMR 7598,F-75005, Paris, France}
\affil[3]{Aix-Marseille Universit\'e, CNRS, Centrale Marseille, Institut de Math\'ematique de Marseille, UMR 7373, 13453 Marseille, France}
\begin{document}
\maketitle

\maketitle

\begin{abstract}
In this paper we consider a bistable reaction-diffusion equation in unbounded domains and we investigate the existence of propagation phenomena, possibly partial, in some direction or, on the contrary, of blocking phenomena. We start by proving the well-posedness of the problem. Then we prove that when the domain has a decreasing cross section with respect to the direction of propagation there is complete propagation. Further, we prove that the wave can be blocked as it comes to an abrupt geometry change. Finally we discuss various general geometrical properties that ensure either partial or complete invasion by 1. In particular, we show that in a domain that is ``star-shaped" with respect to an axis, there is complete invasion by 1.
\bigskip\\
\noindent{2010 \em{Mathematics Subject Classification}: 35B08, 35B30, 35B40,35C07, 35K57, 92B05, 92C20}\\
\noindent{{\em Keywords:} Reaction-diffusion equations, travelling waves, invasion fronts, bistable equation, blocking, propagation, stationary solutions} 
\end{abstract}

%{$\ $}\vspace{-50pt}

%%%%%%%%%%%%%%%%%%%%%%%%%%%%%%%%%%%%%%%%
%%%%%%%%%%%%%%%%%%%%%%%%%%%%%%%%%%%%%%%%
\tableofcontents

\vskip 20pt

\section{Introduction}\label{Intro}
\subsection{The problem}
In this paper we consider the following parabolic problem
\be\label{problem}
\begin{cases}
\partial_tu(t,x)-\Delta u(t,x)=f(u(t,x)), &\textrm{for } t\in\R,\quad x\in\Omega,\\
\partial_\nu u(t,x) =0, &\textrm{for } t\in\R,\quad x\in\partial\Omega,
\end{cases}
\ee
where $\Omega$ is an unbounded domain such that 
\be\label{Omegadef}
\Omega=\left\{(x_1,x'), x_1\in\R, x'\in\omega(x_1)\subset\R^{N-1}\right\}.
\ee
Throughout the paper, we assume $\omega$ not to depend on $x_1$ for $x_1<0$ and $f$ is a bistable nonlinearity. We call such domains ``cylinder-like". We give more precisions on the general assumptions in the next section but one can see in Figure \ref{exomega} various examples of the type of domains $\Omega$ we have in mind in dimension 2.
\begin{figure}[h!]
\begin{center}
\includegraphics[scale=0.6]{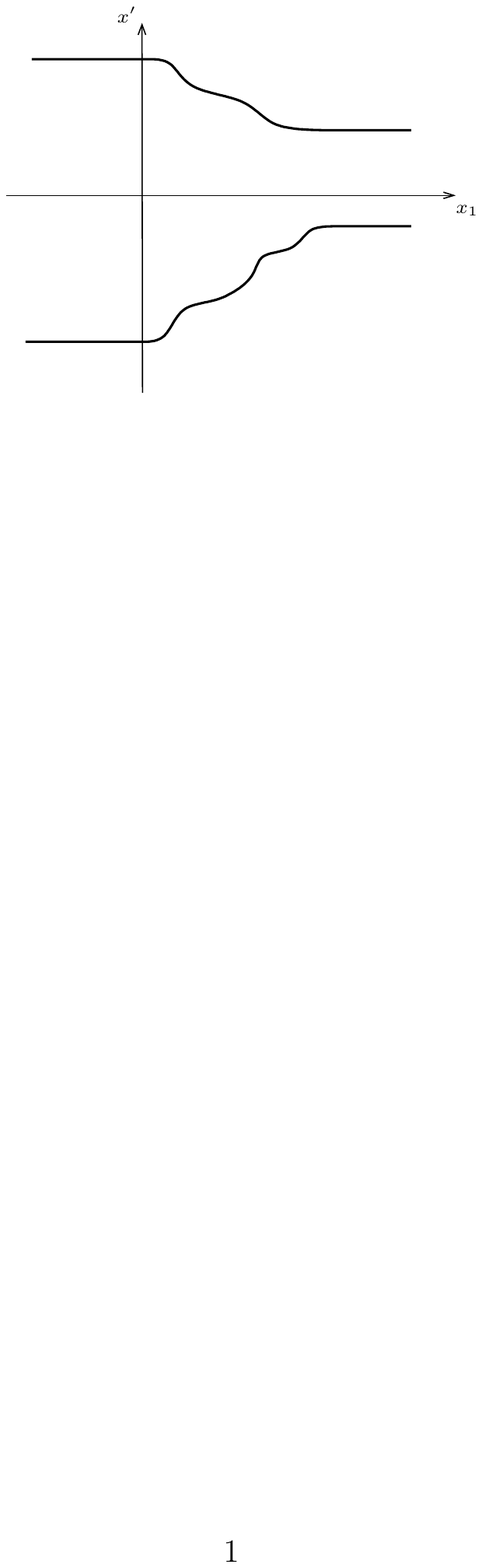}
\includegraphics[scale=0.5]{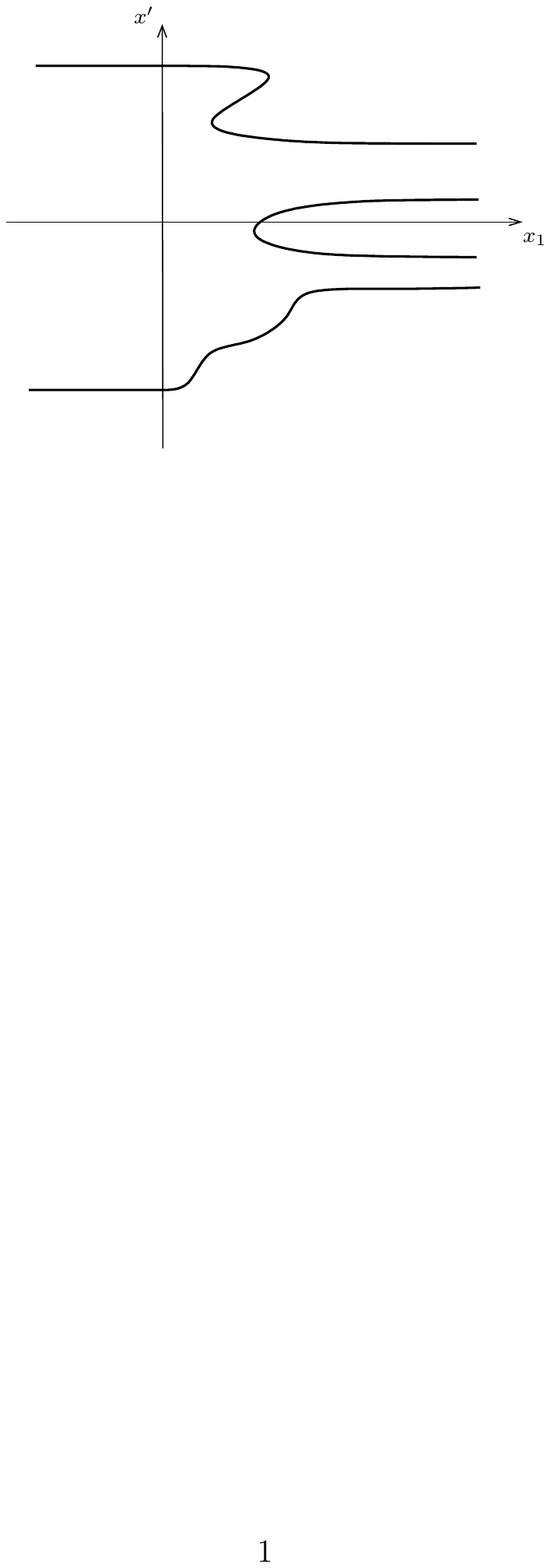}
\includegraphics[scale=0.35]{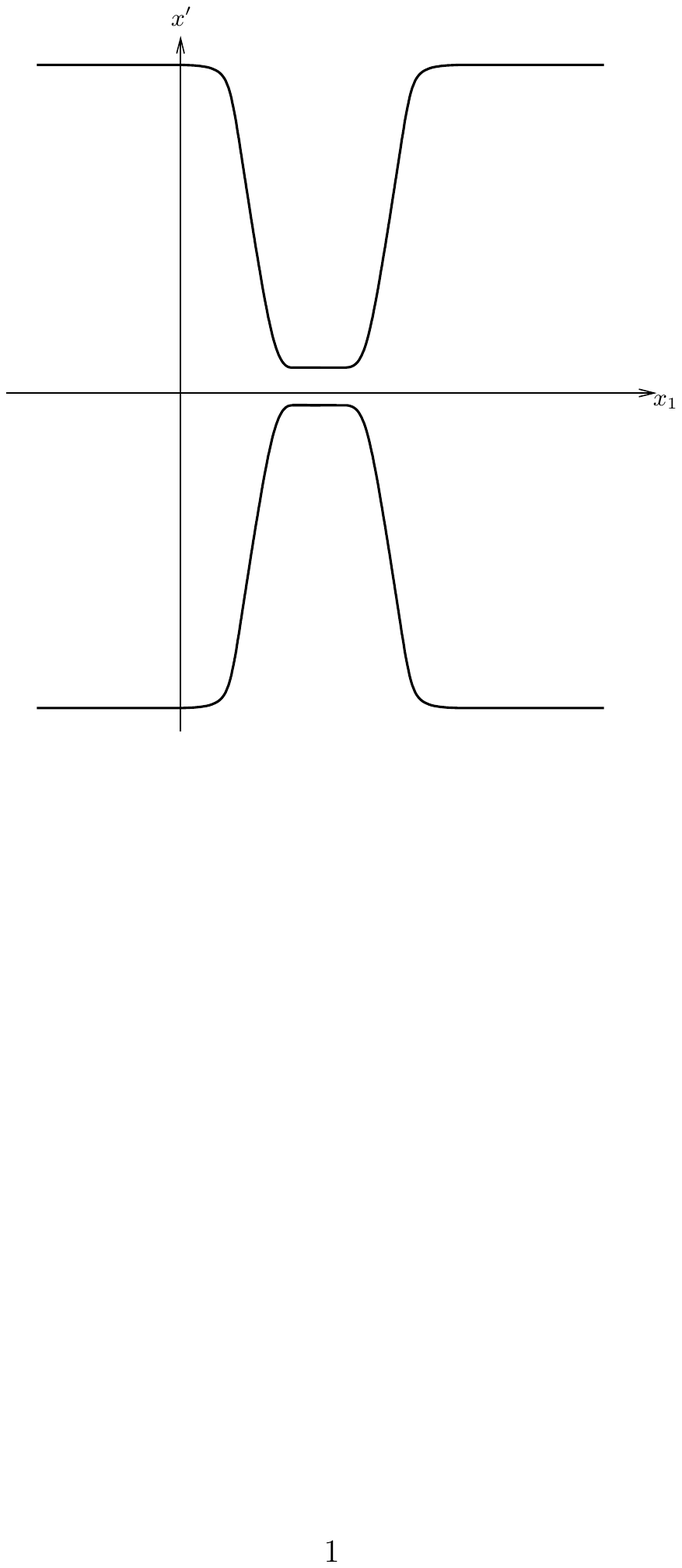}
\includegraphics[scale=0.75]{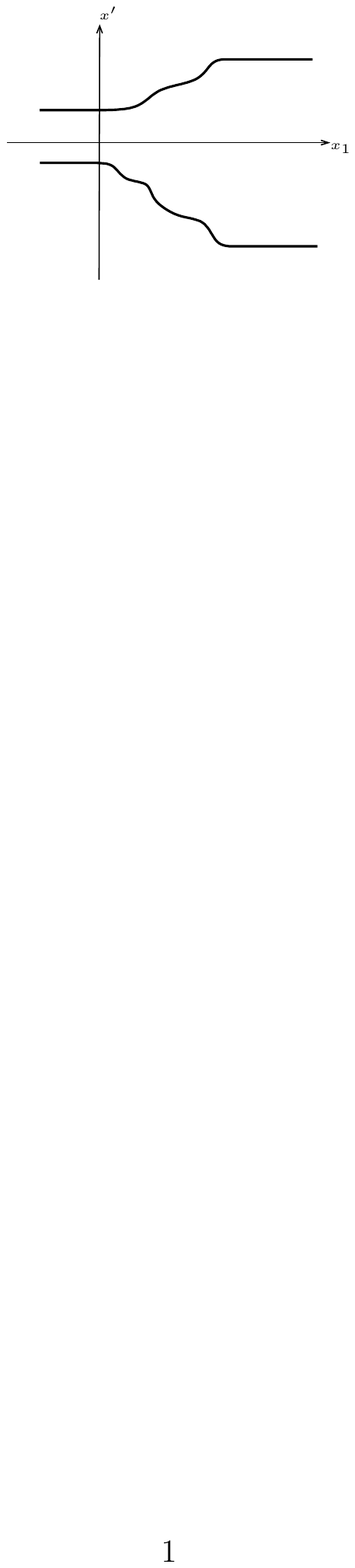}\quad
\includegraphics[scale=0.7]{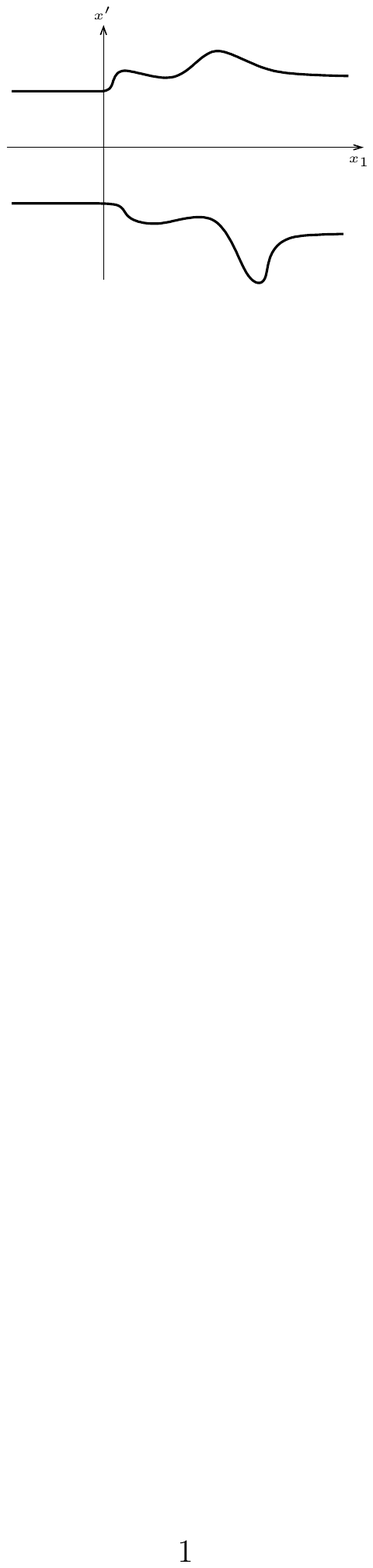}\quad
\includegraphics[scale=0.55]{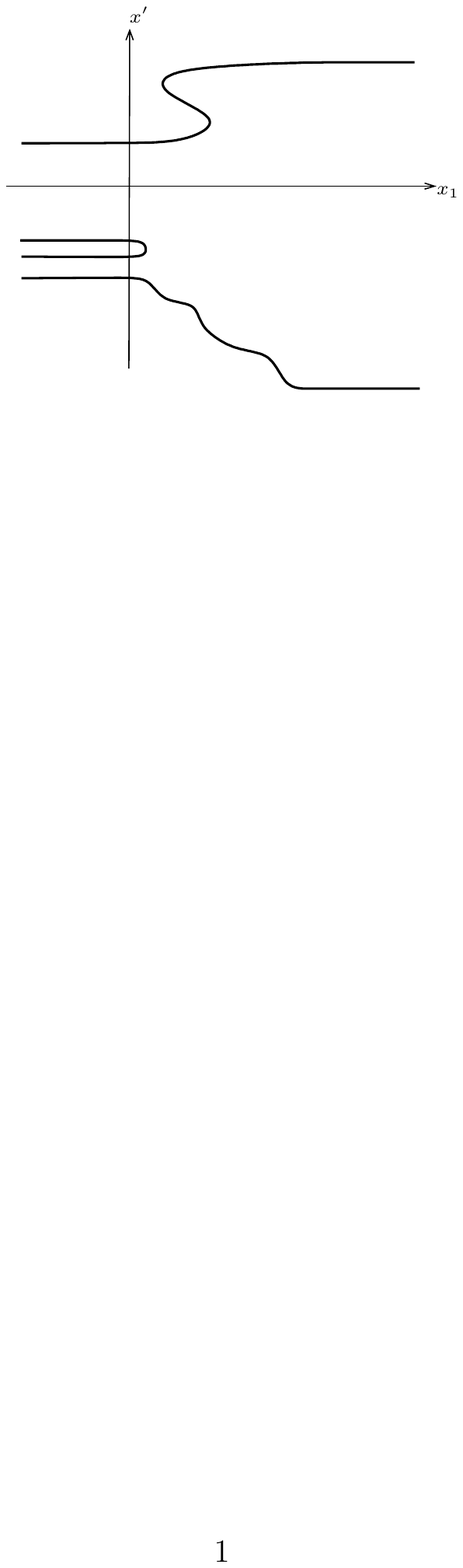}

\caption{Examples of ``cylinder-like" domains $\Omega$. Propagation is considered in the direction of increasing $x_1$.}\label{exomega}
\end{center}
\end{figure}

The aim of this paper is to describe invasions coming from $-\infty$. That is, waves propagating from $x_1=-\infty$ in the direction of increasing $x_1$. These waves have implications for the initial value problem for initial data with, say, compact support. We can also envision domains with two different axes, with the direction of propagation changing as $t\to-\infty$ and as $t\to+\infty$. Think for instance of ``bent cylinders". But for simplicity, in this paper we only present the results in the framework of a single axis.
The main results in this paper are the following.
\begin{itemize}
\item We first study the existence of transition fronts in this setting, i.e the existence and uniqueness of an entire solution $u$ of the parabolic problem \eqref{problem}, such that 
\be\label{condinf}
u(t,x)-\phi(x_1-ct)\to0 \textrm{ as } t\to-\infty \textrm{ uniformly in } \overline{\Omega},
\ee
where $(\phi,c)$ is the unique bistable travelling front solution of:
\be\label{tw}
\begin{cases}
\phi''+c\phi'+f(\phi)=0 \quad\textrm{in } \R,\\
\phi(-\infty)=1, \hspace{0.1cm}\phi(+\infty)=0,\hspace{0.1cm} \phi(0)=\theta.
\end{cases}
\ee
\item We establish the {\em blocking} of the solution if there exists $a\in\R$ such that the measure of $\Omega\cap\{a<x_1<a+1\}$ is small,
\item We prove that there is {\em axial partial propagation} when $\Omega$ contains a straight cylinder in the $x_1$-direction, whose cross section is a ball of sufficiently large radius.
\item We give conditions on the geometry of $\Omega$ under which there is {\em complete invasion} that is, $u(t,x) \to 1$ as $t\to\infty$ at every point $x$:
\begin{itemize}
\item[$\cdot$] In the case of general domains with decreasing cross section,
\item[$\cdot$] Under the condition of partial convexity and domains with increasing cross section,
\item[$\cdot$] For domains ``star-shaped" with respect to an axis.
\end{itemize} 
\end{itemize}
Let us specify some definitions on propagation and blocking properties.
Let $u_\infty(x):=\underset{t\to+\infty}{\lim}u(t,x)$ for all $x\in\Omega$ (see Theorem \ref{existenceThm} in the next section).
\begin{Def}[Blocking and partial propagation]\label{blockpropdef} - Let $u$ be the solution of problem \eqref{problem} satisfying \eqref{condinf}. We say that $u$ is {\em blocked} when 
$$u_\infty(x_1, x')\to0 \text{ as } x_1\to+\infty, \quad (x_1,x')\in\Omega.$$ Otherwise we say that there is {\em partial propagation} in $\Omega$.
\end{Def}
\begin{Def}[Axial partial propagation]\label{axialprop} - Let $u$ be the solution of problem \eqref{problem} satisfying \eqref{condinf}, we say that there is {\em axial partial propagation} if
$$\underset{\R\times B'_R}{\inf} u_\infty>0,$$
for some $R>0$ and where $B'_R$ is the ball of radius $R$ centred at 0 in $\R^{N-1}$.
\end{Def}
\begin{Def}[Complete invasion]\label{completepropdef} - Let $u$ be the solution of problem \eqref{problem} satisfying \eqref{condinf}, we say that there is {\em complete invasion} if $u_\infty\equiv 1$ in $\Omega$.
\end{Def}

This problem is of interest in different domains of application. In population dynamics, to study the evolution of a population going through an isthmus or for fish populations going through a straight for example. In medical sciences to model the evolution of a depolarisation wave in the brain or to study ventricular fibrillations. These motivations from biology and medicine are detailed in section \ref{Motivation}, where we also discuss the interpretations of our results in these contexts.

\subsection{Previous mathematical results}
In this paper, we are interested in the existence and properties of invasion fronts in the direction $x_1$ (see \cite{BH2} for definition) for a bistable reaction-diffusion equation in unbounded domains. This problem was partly studied in \cite{CG} in special cases. There the authors proved  that in two or three dimensions, if $\Omega$ is the succession of two rectangles (or parallelepipeds) with different widths $r$ and $R$, then one can find conditions on $r$ and $R$ such that the solution is blocked when it goes from the small rectangle to the large rectangle. The authors used a specific symmetrization method that does not work for other domains. In \cite{BHM}, the authors proved the existence of generalised transition fronts in exterior domains, which pass an obstacle. They analysed the interactions between travelling wave solutions and different kind of obstacles. In particular, they gave geometrical conditions on the shapes of the obstacles under which there is complete invasion by the most stable state. They also constructed an obstacle that blocks the bistable travelling wave in some area of the domain. In this case, there is only partial invasion.

In this paper, we investigate the question of propagation/blocking properties for rather general ``cylinder-like" geometries. Roughly speaking, by this we mean to say that there is an axis, say the $x_1$--axis, in the direction of which the domain is unbounded (both as $x_1\to-\infty$ and $x_1\to+\infty$). Furthermore, for simplicity, we assume that the domain is a genuine cylinder in the part $\{x_1<0\}$. We first construct a generalised transition front using the techniques of \cite{BHM}. Then the main purpose of this paper is to analyse the geometrical conditions, under which, the wave is blocked or propagates and, in the latter case, whether the invasion is partial or complete.

Related questions have been studied previously in the literature. In \cite{GL} Grindrod and Lewis approximate the normal speed of the interface of a front using the Eikonal approximation and numerical simulations to point out that the presence of a symmetric narrowing and widening will decrease the average speed and that an abrupt widening could lead this speed to 0. More recently in \cite{LMN2006, LMN2013}, Lou, Matano and Nakamura study the effect of undulating boundary on the normal speed of the interface of a front analysing a curvature-driven motion of plane curves in dimension 2. They give necessary and sufficient conditions on the undulations of the domain for the existence of generalised travelling waves in this context.
%Note that to study the existence of invasion fronts we investigate the stability of non constant solution of the stationary problem depending on the geometry of $\Omega$. 
%These questions on the geometric assumptions of $\Omega$ lead to the open problem of the instability of nonconstant steady state solutions of reaction-diffusion equations in unbounded domain, problem that was solved by Matano in \cite{M} and Casten and Holland in \cite{CH} for bounded domains.
%%%%%%%%%%%%%%%%%%%%%%%%%%%%%%%%%%%
\subsection{Notations and assumptions}
Here are some notations that we will use throughout this paper. First $\R^-=(-\infty,0)$ and $\R^+=(0, +\infty)$. The space dimension is $N\geq 2$ and we write the space variable as 
$x=(x_1,x')\in\R^N$ with $ x_1\in\R$ and $x'\in\R^{N-1}$.

We will use balls both in $\R^N$ as in $\R^{N-1}$. Thus to distinguish the two cases, we use the notation
$$B_R(x_0):=\big\{x\in\R^N,\: |x-x_0|<R \big\}$$ 
for the ball of radius $R$ and centred at $ x_0$ in $\R^N$ whereas
$$B_R'(x_0):=\big\{x'\in\R^{N-1},\: |x'-x_0|<R \big\}$$ 
denotes the ball of radius $R$ and centred at $x_0$ in $\R^{N-1}$.
If $x_0=0$, we will simply write $B_R$ and $B_R'$. The outward unit normal at $x\in \partial \Omega$ will be denoted 
$$\nu(x)=(\nu_1(x),\nu'(x))\in\R\times\R^{N-1}.$$ 
Now let us state precisely the conditions on the bistable nonlinearity $f$. First we assume that $f\in C^{1,1}([0,1])$ and there exists $\theta \in (0,1)$ such that 
\begin{subequations}\label{bistable}
\be
f(0)=f(\theta)=f(1)=0,\quad f'(0)<0, f'(1)<0,
\ee
\be
f(s)<0 \quad \text{ for all } s\in(0,\theta),\quad  f(s)>0 \text{ for all } s\in(\theta,1).
\ee
\end{subequations}
Moreover we assume that 1 is ``more stable" than 0 in the sense that:
\be\label{posf}
\int_0^1f(s)ds>0.
\ee
We let $\beta\in(\theta,1)$ denotes the real number:
\be\label{beta}
\beta=\inf\left\{x\in [0,1], \; \int_0^x f(s)ds>0 \right\}.
\ee
As mentioned before, $(\varphi,c)$ will always denote the one-dimension travelling front with $\varphi(0)=\theta$, i.e. $(\varphi,c)$ is solution of \eqref{tw}.

Concerning the domain $\Omega \subset \R^N$, we have in mind to consider cylinders around the $x_1$-axis with varying cross section but the various results are valid for much more general domains and our only general assumptions are that
\begin{gather}
\Omega \text{ is a uniformly } C^{2,\alpha} \text{ domain of } \R^N \text{ and }\label{condOmega1}\\
 \Omega\cap\left\{x\in\R^N, x_1<0\right\}=\R^-\times \omega \text{ where } \omega \text{ is an open subset of } \R^{N-1}. \label{condOmega3}
\end{gather}
Condition \eqref{condOmega3} means that the portion of $\Omega$ lying in a half space (here $\{x_1<0\}$) is a straight cylinder. Thus, the change in the geometry of the domain takes place in the half space $\R^+\times \R^{N-1}$.

%%%%%%%%%%%%%%%%%%%%%%%%%%
\subsection{Main results}
We first prove the well-posedness of the problem with the following theorem. 
\begin{Th}\label{existenceThm}
Let $\Omega\subset\R^N$ and $f$ satisfy conditions \eqref{condOmega1}-\eqref{condOmega3} and \eqref{bistable}-\eqref{posf}, let $(\phi,c)$ be the unique solution of \eqref{tw}, with $c>0$. Then there exists a unique entire solution of 
\begin{equation*}
\begin{cases}
\partial_tu(t,x)-\Delta u(t,x)=f(u(t,x)), &\textrm{for } t\in\R,\quad x\in\Omega,\\
\partial_\nu u(t,x) =0, &\textrm{for } t\in\R,\quad x\in\partial\Omega,
\end{cases}
\end{equation*}
such that
\begin{equation*}\label{uphi}
u(t,x)-\phi(x_1-ct)\to0 \textrm{ as } t\to-\infty \textrm{ uniformly in } \overline{\Omega}.
\end{equation*}
Moreover, $u$ satisfies $u_t(t,x)>0$, $0<u(t,x)<1$ for all $(t,x)\in\R\times\overline{\Omega}$ and $\ds \lim_{t\to +\infty} u(t,.)=u_\infty$ in $C^2_{loc}(\Omega)$ where $u_\infty$ is stationary solution, that is, verifies
\begin{equation}
\label{pbstat}
\begin{cases}
\Delta u_\infty+f(u_\infty)=0  & x\in \Omega,\\
\partial_\nu u_\infty=0  & x\in \partial \Omega.
\end{cases}
\end{equation}
\end{Th}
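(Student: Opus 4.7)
The plan is to follow the approximation method pioneered by Berestycki--Hamel--Matano in \cite{BHM}: construct the desired entire solution $u$ as a monotone limit of Cauchy problems started farther and farther in the past. For each integer $n\geq 1$, let $u_n$ be the classical solution of \eqref{problem} on $(-n,+\infty)\times\Omega$ with initial datum $u_n(-n,x)=\phi(x_1+cn)$; the comparison principle gives $0\leq u_n\leq 1$ and parabolic Schauder theory provides smoothness up to $\partial\Omega$.

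The core technical step is the construction of Fife--McLeod type sub- and super-solutions
\[
\phi^\pm(t,x)=\phi\bigl(x_1-ct\mp\xi(t)\bigr)\pm q(t),\qquad q(t)=q_0\,e^{\omega t},\qquad \xi(t)=\xi_0\bigl(1-e^{\omega t}\bigr),
\]
with $q_0,\xi_0,\omega>0$ small and tuned in terms of $f'(0),f'(1)<0$ and the Lipschitz constant of $f'$ on $[0,1]$. The interior parabolic inequality is the classical Fife--McLeod calculation, relying only on the bistable structure of $f$. The delicate point---and the main obstacle of the proof---is the Neumann condition. On the cylindrical part $\{x_1<0\}\cap\partial\Omega$ one has $\nu_1=0$, so $\partial_\nu\phi^\pm\equiv 0$ automatically; on the remaining part of $\partial\Omega$ the derivative $\partial_\nu\phi^\pm=\nu_1\,\phi'(x_1-ct\mp\xi(t))$ has no definite sign, since $\nu_1$ can be positive or negative depending on whether the cross-section expands or contracts. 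However, this portion of $\partial\Omega$ lies in $\{x_1\geq 0\}$, where $x_1-ct\mp\xi(t)\to+\infty$ as $t\to-\infty$ and $|\phi'|$ decays exponentially; one can therefore correct $\phi^\pm$ by a small space-time term absorbing this boundary defect while preserving the interior inequality, obtaining genuine barriers on $(-\infty,T_0]\times\overline\Omega$ for some $T_0$ sufficiently negative.

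With the barriers in hand, the comparison principle yields $\phi^-\leq u_n\leq\phi^+$ on $[-n,T_0]\times\overline\Omega$ as soon as $n$ is large enough for $\phi^-(-n,\cdot)\leq u_n(-n,\cdot)\leq\phi^+(-n,\cdot)$. In particular $u_n\leq u_{n+1}$, and the monotone limit $u:=\lim_{n\to\infty}u_n$ exists and belongs to $C^2_{loc}(\R\times\overline\Omega)$ by parabolic estimates; $u$ solves \eqref{problem}, lies strictly between $0$ and $1$ by the strong maximum principle, and satisfies $u(t,x)-\phi(x_1-ct)\to 0$ uniformly in $\overline\Omega$ as $t\to-\infty$ upon letting $q_0,\xi_0\to 0^+$ in the barriers. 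Time monotonicity is obtained by comparing $u_n$ with its shift $u_n(\cdot+\tau,\cdot)$: with suitable choice of constants one has $\phi^-(-n+\tau,x)\geq \phi(x_1+cn)=u_n(-n,x)$, the comparison principle propagates $u_n(t+\tau,x)\geq u_n(t,x)$ forward, and passing to the limit $n\to\infty$ yields $u(t+\tau,x)\geq u(t,x)$ for every $\tau>0$, which the strong maximum principle strengthens to $\partial_tu>0$. Combined with $u<1$, this gives the pointwise limit $u_\infty(x):=\lim_{t\to+\infty}u(t,x)$; Schauder estimates upgrade the convergence to $C^2_{loc}(\Omega)$ and show that $u_\infty$ solves the stationary problem \eqref{pbstat}. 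Uniqueness among entire solutions satisfying \eqref{condinf} follows from the standard sliding argument: for any other such solution $v$, $|u-v|\leq\epsilon$ at some very negative time $t=-T$, this $L^\infty$ bound propagates forward by the comparison principle for the bistable equation, and letting $\epsilon\to 0$ forces $u=v$.
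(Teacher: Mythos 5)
Your overall strategy (Cauchy problems started at $t=-n$, squeezed between Fife--McLeod type barriers, monotone passage to the limit, then a comparison principle for uniqueness) is the same as the paper's, which follows \cite{BHM}. But the two steps you dispose of in a sentence are exactly where the work lies, and as written they do not go through. First, the Neumann condition on $\partial\Omega\cap\{x_1\geq 0\}$: you correctly identify that $\partial_\nu\phi^\pm=\nu_1\,\phi'$ has no sign there, but ``correcting $\phi^\pm$ by a small space-time term absorbing this boundary defect while preserving the interior inequality'' is an assertion, not a construction; building a corrector with prescribed inhomogeneous Neumann data on an arbitrary uniformly $C^{2,\alpha}$ boundary, uniformly as $t\to-\infty$, without destroying the interior differential inequality, is the crux of the matter. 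The paper avoids the problem altogether by choosing barriers that do not depend on $x$ at all in $\{x_1>0\}$: $w^+=2\phi(-ct-\xi(t))$ and $w^-=0$ there, glued across $\{x_1=0\}$ by the reflections $\phi(x_1-ct\mp\xi)\pm\phi(-x_1-ct\mp\xi)$ in the cylindrical part. Then $\partial_\nu w^\pm=0$ holds identically on all of $\partial\Omega$, since the non-cylindrical boundary lies in $\{x_1\geq0\}$ where $w^\pm$ are constant in $x$, and $\nu_1=0$ on $\{x_1<0\}$. A related smaller point: with your initial datum $u_n(-n,\cdot)=\phi(\cdot+cn)$ the claimed monotonicity $u_n\leq u_{n+1}$ does not follow from the barriers; the paper takes $u_n(-n,\cdot)=w^-(-n,\cdot)$ precisely so that monotonicity in $n$ is immediate from $w^-$ being a subsolution.

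Second, and more seriously, your uniqueness argument fails as stated. For a bistable nonlinearity an $L^\infty$ bound $|u-v|\leq\epsilon$ at time $-T$ does \emph{not} ``propagate forward by the comparison principle'': Gronwall only gives $|u-v|\leq\epsilon\, e^{L(t+T)}$ with $L$ the Lipschitz constant of $f$, and since $T\to\infty$ as $\epsilon\to0$ this yields nothing at any fixed time $t$. The paper's uniqueness is the general comparison principle of Lemma \ref{princomp}: one dominates $v$ by $z(t,x)=u\bigl(t+t_0+\sigma\epsilon(1-e^{-\gamma t}),x\bigr)+\epsilon e^{-\gamma t}$, i.e.\ a time shift of size $O(\epsilon)$ combined with an exponentially decaying additive perturbation. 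Verifying that $z$ is a supersolution requires the quantitative lower bound $u_t\geq\delta>0$ on the transition set $\{\eta\leq u\leq1-\eta\}$ for $t\leq T_\eta$ (Lemma \ref{uniqsol}), itself proved by a compactness and Hopf-lemma argument, together with the spectral gap $f'\leq-\gamma$ near $0$ and $1$. Your proposal contains neither the shifted supersolution nor the $u_t\geq\delta$ estimate, so the uniqueness claim is unsupported; you should either reproduce that argument or supply an equivalent stability mechanism.
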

This theorem is essentially due to \cite{BHM}, who considered an exterior domain. But their proof readily extends to our case. We sketch it in section 3.
\begin{Rk}\label{existencegeo}
This result also applies to bent and right-ended cylinders as those illustrated in section \ref{bentsec}
\end{Rk}
Most of this paper is devoted to analyse the large time behaviour of $u(t,x)$, that is understanding when it is blocked, when there is partial propagation or axial partial propagation or when there is complete invasion. We start with the case of domains with decreasing cross section for which we can prove that there is complete invasion.
\begin{Th}[Complete propagation into decreasing domains]\label{decreaseThm}
Assume that for all $x\in\partial \Omega$, $\nu_1(x)\geq0$, where $\nu_1(x)$ is the first component of the outward unit normal at $x$. Under the same assumptions on $\Omega$, $f$ and $(\phi,c)$ as in Theorem \ref{existenceThm}, the solution $u$ of \eqref{problem} satisfying \eqref{condinf} propagates to 1 in $\Omega$ in the sense of Definition \ref{completepropdef}. That is
$$u(t,\cdot)\to 1 \text{ as } t\to+\infty \text{ in }\Omega.$$
Moreover, if we assume that 
$$\Omega\cap\left\{x\in\R^N, x_1>l\right\}=(l,+\infty)\times \omega_r,$$
for some $l>0$, $\omega_r\subset\R^{N-1}$, then $c$ is the asymptotic speed of propagation, i.e 
\begin{align*}
&\forall\: \hat{c}>c,\hspace{0.1cm} \underset{t\to+\infty}{\lim}\hspace{0.1cm} \underset{x_1>\hat{c}t}{\sup} u(t,x)=0,\\
&\forall\: \hat{c}<c,\hspace{0.1cm} \underset{t\to+\infty}{\lim}\hspace{0.1cm} \underset{x_1<\hat{c}t}{\inf} u(t,x)=1.
\end{align*}
\end{Th}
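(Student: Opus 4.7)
My plan is to prove complete invasion by sliding the one-dimensional bistable front $\phi$ from the left as a sub-solution, exploiting the sign condition $\nu_1\geq 0$, and then to extract the asymptotic speed from this sub-solution together with a matching super-solution living in the eventually straight cylindrical tail.

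\textbf{Sub-solution and complete invasion.} For every $s>0$ I would set $\underline u_s(t,x):=\phi(x_1-ct-s)$. Because $\phi$ depends only on $x_1$ and $(\phi,c)$ solves \eqref{tw}, $\underline u_s$ satisfies the equation in \eqref{problem} with equality in $\R\times\Omega$. The decisive point is the Neumann condition: on $\partial\Omega$, since $\phi'<0$ and $\nu_1\geq 0$,
\[
\partial_\nu \underline u_s(t,x)=\nu_1(x)\,\phi'(x_1-ct-s)\leq 0,
\]
so $\underline u_s$ is a sub-solution of \eqref{problem}. The uniform convergence \eqref{condinf}, together with the uniform continuity and strict monotonicity of $\phi$, provides, for every $s>0$, some $t_s\in\R$ with $u(t_s,\cdot)\geq \underline u_s(t_s,\cdot)$ on $\overline\Omega$. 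The parabolic comparison principle with Neumann data then gives $u(t,x)\geq \phi(x_1-ct-s)$ for all $t\geq t_s$ and $x\in\overline\Omega$; sending $t_s\to-\infty$ and then $s\to 0^+$ yields $u(t,x)\geq \phi(x_1-ct)$ everywhere. Fixing $x\in\Omega$ and letting $t\to+\infty$, the right-hand side tends to $1$, so $u_\infty\geq 1$; combined with $u<1$ from Theorem \ref{existenceThm}, this forces $u_\infty\equiv 1$.

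\textbf{Asymptotic speed.} The lower bound follows at once from the sub-solution: for $\hat c<c$,
\[
\inf_{x_1<\hat c t}u(t,x)\geq \phi\bigl((\hat c-c)t\bigr)\xrightarrow[t\to+\infty]{}1.
\]
For the upper bound, the key observation is that on the cylindrical tail $(l,+\infty)\times \omega_r$ the front $\phi(x_1-ct-K)$ is an \emph{exact} solution of the PDE--Neumann problem for every $K\in\R$. I would use a Fife--McLeod-type perturbation $\bar u(t,x):=\phi(x_1-ct-K(t))+q(t)$, with $K(t)$ and $q(t)$ exponentially stabilising, to build a super-solution on the tail that dominates $u(t_0,\cdot)$ at some large time $t_0$ and at the inner face $\{l\}\times\omega_r$ for all $t\geq t_0$. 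The comparison principle applied on $[t_0,+\infty)\times\bigl([l,+\infty)\times\omega_r\bigr)$ then gives $u(t,x)\leq \phi(x_1-ct-K_\infty)$ on the tail, so that for every $\hat c>c$,
\[
\sup_{x_1>\hat c t}u(t,x)\leq \phi\bigl((\hat c-c)t-K_\infty\bigr)\xrightarrow[t\to+\infty]{}0.
\]

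\textbf{Main obstacle.} The hypothesis $\nu_1\geq 0$ is one-sided: $\phi(x_1-ct-s)$ is a sub-solution but not a super-solution of \eqref{problem}, so the upper speed bound cannot simply mirror the lower one, and the real work is to arrange the comparison in the cylindrical tail. One has to control $u(t_0,\cdot)$ at the inner face $x_1=l$ and, crucially, to establish the spatial decay $u(t_0,x)\to 0$ as $x_1\to+\infty$ uniformly in $x'\in\omega_r$. I would extract this decay from the construction underlying Theorem \ref{existenceThm}, in which $u$ is obtained as a monotone-in-$t$ limit of solutions starting at times $t_n\to-\infty$ from the front profile: each approximant inherits the tail decay of $\phi$ at $+\infty$ on bounded time intervals by the parabolic maximum principle, and this decay survives the passage to the limit. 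The Fife--McLeod perturbation then absorbs the phase shift created by the geometry.
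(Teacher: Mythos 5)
Your overall strategy coincides with the paper's: the planar front $\phi(x_1-ct)$ is used as a subsolution thanks to $\nu_1\geq 0$ and $\phi'<0$, which gives both complete invasion and the lower speed bound, and the upper speed bound comes from a comparison on the cylindrical tail. However, two of your steps do not go through as written.

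First, the initial comparison for the subsolution. You claim that \eqref{condinf} provides, for each $s>0$, a finite time $t_s$ at which $u(t_s,\cdot)\geq\phi(\cdot-ct_s-s)$ on $\overline\Omega$. This fails for either sign of the shift: \eqref{condinf} only gives $u(t,x)\geq\phi(x_1-ct)-\epsilon(t)$ with $\epsilon(t)\to0$, and since $\inf_{\xi\in\R}\bigl(\phi(\xi)-\phi(\xi+s)\bigr)=0$ (the two translates merge as $\xi\to\pm\infty$), no translate of $\phi$ is dominated pointwise by $u(t_s,\cdot)$ at any finite time. This is the classical difficulty with bistable fronts, and it is precisely why the paper proves the generalized comparison principle, Lemma \ref{princomp}, for sub- and supersolutions whose ordering is imposed only in the limit $t\to-\infty$; its proof needs the additive Fife--McLeod correction $\epsilon e^{-\gamma t}$ together with the lower bound on $u_t$ in the transition region (Lemma \ref{uniqsol}). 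With Lemma \ref{princomp} in hand the conclusion $u(t,x)\geq\phi(x_1-ct)$ is immediate (this is the paper's Lemma \ref{philequ}); the fix is to invoke that lemma rather than the classical comparison principle at a finite time.

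Second, the upper speed bound. Your supersolution $\phi(x_1-ct-K(t))+q(t)$ on the tail requires $q(t_0)$ small (so that $f'<0$ absorbs the perturbation near the equilibria), hence requires $\limsup_{x_1\to+\infty}u(t_0,x)\leq q(t_0)$ at the initial time $t_0$. You identify this as the main obstacle, but the decay you propose to extract from the construction of $u$ is not there: the supersolution $w^+$ in \eqref{supersol} equals the constant $2\phi(-ct-\xi(t))$ on $\{x_1>0\}$, so it yields smallness as $t\to-\infty$ uniformly in $x$, not decay in $x_1$ at a fixed later time, and propagating that smallness forward to $t_0$ requires precisely the kind of supersolution you are trying to build. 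The paper circumvents this circularity with a different device: it replaces $f$ by $f_\eta\geq f$ vanishing at $\eta$ and $1+\eta$, and uses the exact front $\phi_\eta(x_1-c_\eta t-\xi)$ (with $c_\eta\to c$ as $\eta\to 0$) as the supersolution on $[l,+\infty)\times\omega_r$. Since $\phi_\eta(+\infty)=\eta>0$, the initial comparison only needs $u(T,x)\leq\eta$ for $x_1$ large at some very negative time $T$, which follows directly from \eqref{condinf}; no spatial decay of $u$ at a finite positive time is ever needed. You should either adopt this floor-at-$\eta$ supersolution or supply an independent proof of the spatial decay.
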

%Here $B^{N-1}_r(x')$ denotes the ball in $\R^{N-1}$ of center $x'$ and radius $r$. . 
This theorem means that there exists a generalised transition front representing an invasion of 0 by 1 for problem \eqref{problem}.
\begin{Rk}
If we assume that $\Omega$ is a cylinder-like domain with axis on the $x_1$ axis (i.e $w(\cdot)$ defined in \eqref{Omegadef} is open, connected, smooth and bounded), the condition on $\nu_1(x)$ is fulfilled as soon as the cross section is non-increasing with respect to $x_1$, but 
it applies to more general domains (see the upper middle figure of Figure \ref{exomega} for an example in dimension 2). 
\end{Rk}

Next we study the existence of blocking phenomena and prove the following

\begin{Th}[Blocking by a narrow passage] \label{IncreaseThm}
Under the same assumptions on $\Omega$, $f$ and $(\phi,c)$ as in Theorem \ref{existenceThm} and let $a,b$ be fixed with $-\infty<a<b<+\infty$. There exists $\epsilon>0$ small enough depending on 
\be\label{areaindeps}
\Omega\cap\{x\in\R^N,\:b<x_1<b+1\}
\ee 
such that if 
\be\label{areaeps}
|\Omega\cap\{x\in\R^N,\: x_1\in(a,b)\}|<\epsilon,
\ee
then the unique solution $u$ of \eqref{problem} satisfying \eqref{condinf} is blocked in the right part of the domain in the sense of Definition \ref{blockpropdef}. That is
$$u(t,\cdot)\to u_\infty\text{ in }\Omega \text{ as }t\to+\infty\text{ with }u_\infty(x)\to 0\text{ as }x_1\to+\infty.$$
\end{Th}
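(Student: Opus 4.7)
The plan is to construct a stationary super-solution $V \in C^2(\overline{\Omega})$, $0 \le V \le 1$, satisfying $\Delta V + f(V) \le 0$ in $\Omega$, $\partial_\nu V \ge 0$ on $\partial\Omega$, $V \equiv 1$ on $\Omega \cap \{x_1 \le a\}$, and $V(x) \to 0$ as $x_1 \to +\infty$. Given such $V$, the parabolic comparison principle for entire solutions yields $u(t, \cdot) \le V$ for all $t \in \R$: on the sub-domain $\Omega_a := \Omega \cap \{x_1 > a\}$ one has $V|_{\Sigma_a} \equiv 1 \ge u|_{\Sigma_a}$ (since $u \le 1$ by Theorem \ref{existenceThm}), the Neumann conditions match on $\partial\Omega \cap \{x_1 > a\}$, and by \eqref{condinf} together with the monotonicity of $\phi$, $u(T, \cdot) \to 0$ uniformly on $\{x_1 \ge a\}$ as $T \to -\infty$, which initiates the comparison against $V$. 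On $\{x_1 \le a\}$, $u \le 1 = V$ is trivial. Combining with the convergence $u(t, \cdot) \to u_\infty$ of Theorem \ref{existenceThm} gives $u_\infty \le V$, hence $u_\infty(x) \to 0$ as $x_1 \to +\infty$, which is the desired blocking.

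\paragraph{Construction of $V$.}
I would build $V$ piecewise, in the spirit of \cite{BHM}. On $\Omega_r := \Omega \cap \{x_1 > b\}$, exploiting $f'(0) < 0$, fix $\eta \in (0,\theta)$ small and let $V_r$ be the stationary solution of $\Delta V_r + f(V_r) = 0$ with Dirichlet datum $V_r = \eta$ on $\Sigma_b := \Omega \cap \{x_1 = b\}$, $\partial_\nu V_r = 0$ on $\partial\Omega \cap \{x_1 > b\}$, and $0 < V_r \le \eta$; existence follows from monotone iteration below the super-solution $\eta$, and $V_r(x) \to 0$ as $x_1 \to +\infty$ by a blow-down/Liouville argument, since any bounded non-negative limit of the translates $V_r(\cdot + s e_1)$ satisfies the same equation on a cylindrical limit domain with Neumann boundary and value $\le \eta < \theta$, forcing it to vanish. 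Next set $V \equiv 1$ on $\Omega \cap \{x_1 \le a\}$, and on the passage $\Omega_p := \Omega \cap \{a < x_1 < b\}$ let $V_p$ solve $\Delta V_p + f(V_p) = 0$ with Dirichlet data $1$ on $\Sigma_a$ and $\eta$ on $\Sigma_b$, Neumann on $\partial\Omega \cap \Omega_p$. The glued piecewise-$C^2$ function $V$ becomes $C^2$ after a standard mollification.

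\paragraph{Main obstacle.}
The crux is to verify that this $V$ is globally a super-solution. The interior inequalities hold on each piece, so the real work is at the interfaces $\Sigma_a$ and $\Sigma_b$, where one needs non-positive downward jumps of the outward normal derivative of $V$. The jump at $\Sigma_a$ is automatic: the maximum principle gives $\partial_{x_1} V_p|_{x_1 = a^+} \le 0$ because $V_p \le 1 = V_p|_{\Sigma_a}$. The delicate jump is at $\Sigma_b$, where the required inequality $\partial_{x_1} V_r|_{x_1 = b^+} \le \partial_{x_1} V_p|_{x_1 = b^-}$ is precisely where the smallness $|\Omega_p| < \epsilon$ is used: testing the equation for $V_p$ against $V_p - \psi$, with $\psi$ the smooth $x_1$-linear interpolation between $1$ at $\Sigma_a$ and $\eta$ at $\Sigma_b$, and using the uniform bound $|f(V_p)| \le \|f\|_\infty$, produces $\|\nabla V_p\|_{L^2(\Omega_p)}^2 \le C(b-a,\eta,f)\,|\Omega_p|$, from which the flux of $V_p$ through $\Sigma_b$ is of order $\sqrt{\epsilon}$. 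On the other hand, the inward flux $-\partial_{x_1} V_r|_{\Sigma_b}$ is bounded below by a strictly positive constant determined entirely by the geometry of $\Omega \cap \{b < x_1 < b+1\}$ (this is where the dependence of $\epsilon$ on that slice, as announced in the statement, enters, via the Hopf boundary estimate for the elliptic problem defining $V_r$). For $\epsilon$ small enough the jump inequality holds, $V$ is a genuine super-solution, and the comparison argument of the first paragraph concludes the proof.
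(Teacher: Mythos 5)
Your overall strategy (build a stationary supersolution $V$ on $\{x_1>a\}$ equal to $1$ at $x_1=a$ and decaying as $x_1\to+\infty$, then invoke the general comparison principle) is the right one and is exactly how the paper concludes. But the construction of $V$ by gluing two Dirichlet pieces at $\Sigma_b=\Omega\cap\{x_1=b\}$ does not work, and the flux argument you propose to rescue it is flawed in two ways. First, the supersolution property of the glued function requires the \emph{pointwise} inequality $\partial_{x_1}V_r|_{b^+}\le \partial_{x_1}V_p|_{b^-}$ on $\Sigma_b$ (the distributional Laplacian of $V$ carries the surface term $\bigl(\partial_{x_1}V_r|_{b^+}-\partial_{x_1}V_p|_{b^-}\bigr)\delta_{\Sigma_b}$, which must be non-positive), and a bound $\lVert\nabla V_p\rVert_{L^2(\Omega_p)}^2\le C\epsilon$ gives no control on the trace of $\nabla V_p$ on $\Sigma_b$. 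Second, and more seriously, the inequality is false in the regime you set up: in the model case of a thin straight tube the problem for $V_p$ is essentially one-dimensional, and since $V_p$ must descend from $1$ to $\eta$ over the fixed length $b-a$ with $|V_p''|\le\lVert f\rVert_\infty$, one gets $\partial_{x_1}V_p|_{b^-}=\tfrac{\eta-1}{b-a}+O(b-a)$, a negative quantity bounded away from $0$ \emph{independently of} $\epsilon$ (the $L^2$ norm of $\nabla V_p$ is small only because the volume is small, not because the gradient is). Meanwhile $0<V_r\le\eta$ with $f(0)=0$ and $f$ Lipschitz forces, by standard boundary gradient estimates, $|\partial_{x_1}V_r|_{b^+}|\le C\eta$. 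So for small $\eta$ one has $\partial_{x_1}V_r|_{b^+}\ge -C\eta> \tfrac{\eta-1}{b-a}+O(b-a)\approx\partial_{x_1}V_p|_{b^-}$: the derivative jumps \emph{upward} across $\Sigma_b$ (a convex kink), which is the \emph{sub}solution direction. No choice of $\eta$ obviously repairs this, because the steep descent through the passage followed by the flat tail is intrinsically convex at the junction.

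The paper avoids the two-interface gluing altogether. It solves the mixed boundary value problem on the whole truncated region $\Omega'_R=\Omega\cap\{a<x_1<R\}$ ($w=1$ at $x_1=a$, $w=0$ at $x_1=R$, Neumann elsewhere) as an \emph{exact} solution, obtained as a local minimizer of the energy $J_{\Omega'_R}$ in an $H^1$-ball of fixed radius $\delta$ around the profile $w_0$ which is $1$ at $a$, affine down to $0$ at $b$, and $0$ beyond. The smallness of $\epsilon$ enters only through the energy: on the narrow passage $J_{\Omega_T}(w_0)\le C|\Omega_T|\le C\epsilon$, while on the sphere $\lVert w-w_0\rVert_{H^1}=\delta$ the energy excess is bounded below using $f'(0)<0$ on $\{x_1>b\}$ and $F(s)\ge\kappa(s-1)^2$ on the passage; this energy barrier is what replaces your derivative-matching step, and it is where the dependence of $\epsilon$ on the slice $\Omega\cap\{b<x_1<b+1\}$ really lives. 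The $H^1$ bound $\lVert w_\infty-w_0\rVert_{H^1(\Omega')}\le\delta$ then yields the decay $w_\infty\to0$ as $x_1\to+\infty$ after letting $R\to\infty$. Only one gluing remains, at $x_1=a$, where extending by $1$ to the left gives the derivative inequality for free (as you correctly observed for your $\Sigma_a$). If you want to keep a constructive flavour, the fix is to replace your two Dirichlet cells by this single variational cell; as it stands, the step at $\Sigma_b$ is a genuine gap that invalidates the supersolution.
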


A particular case of this theorem is the example of abruptly opening cylinders in the direction of propagation $x_1$. More precisely, for a fixed right hand half of the domain $\Omega^+:=\Omega\cap\{x_1>0\}$, there exists $\epsilon>0$ small enough such that if $\Omega\cap\{x_1<0\}\subset \R\times B_\epsilon$, the advancing wave is blocked. From the proof, essentially, we will see that it is blocked at the abrupt passage from a narrow cylinder into a wide domain. On the other hand, Theorem \ref{increaseThmprop} establishes that for a large left cylinder, a generalised travelling front can never be blocked. Thus rescaling the space variable to fix a given measure to the left cylinder and enlarge the right side of the domain does not necessarily work. Figure \ref{widennarrow} displays examples of 2D domains where this blocking phenomenon arises.
\begin{figure}[!h]
\begin{center}
\includegraphics[scale=0.33]{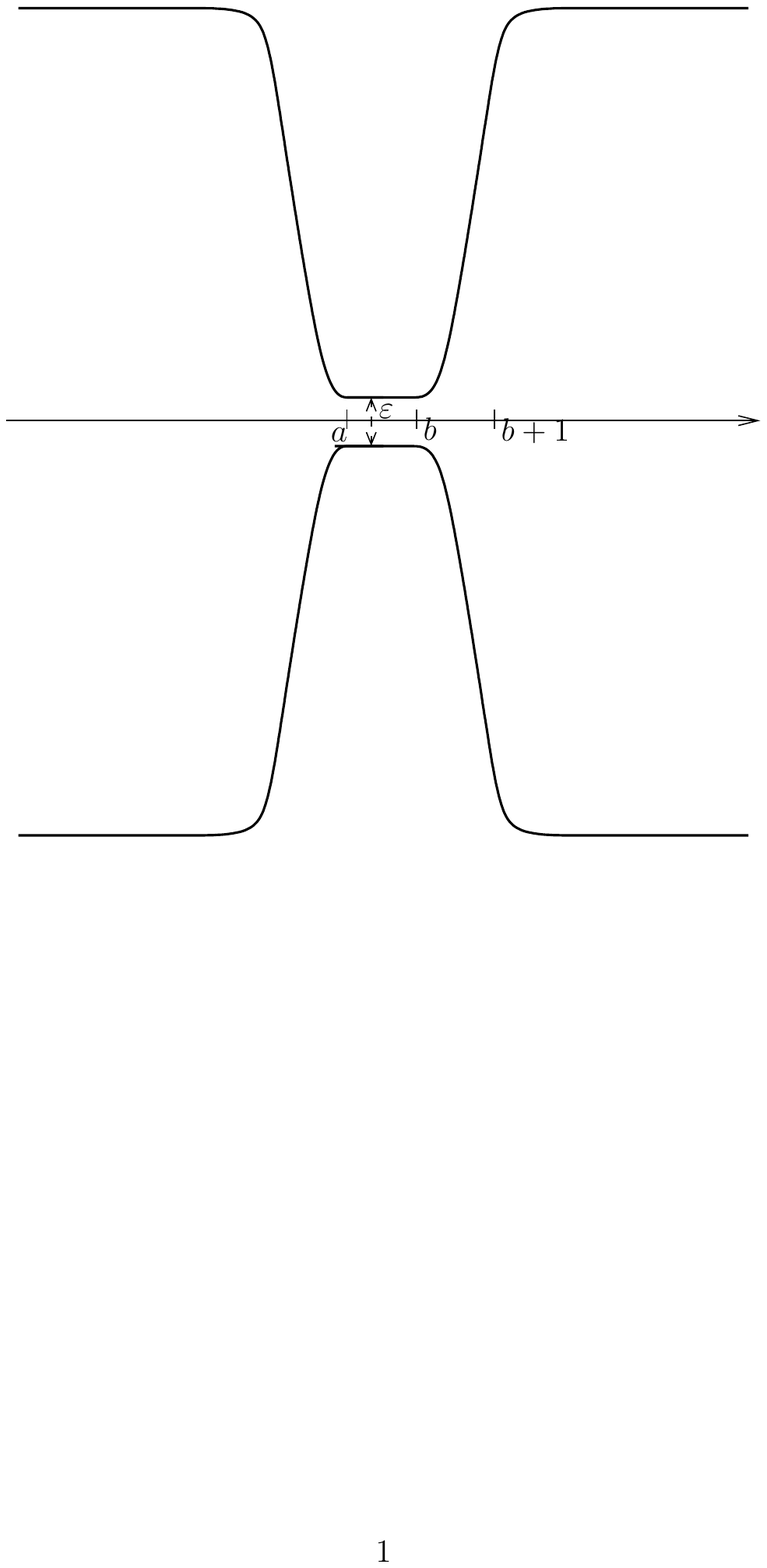}\hspace{1cm}
\includegraphics[scale=0.33]{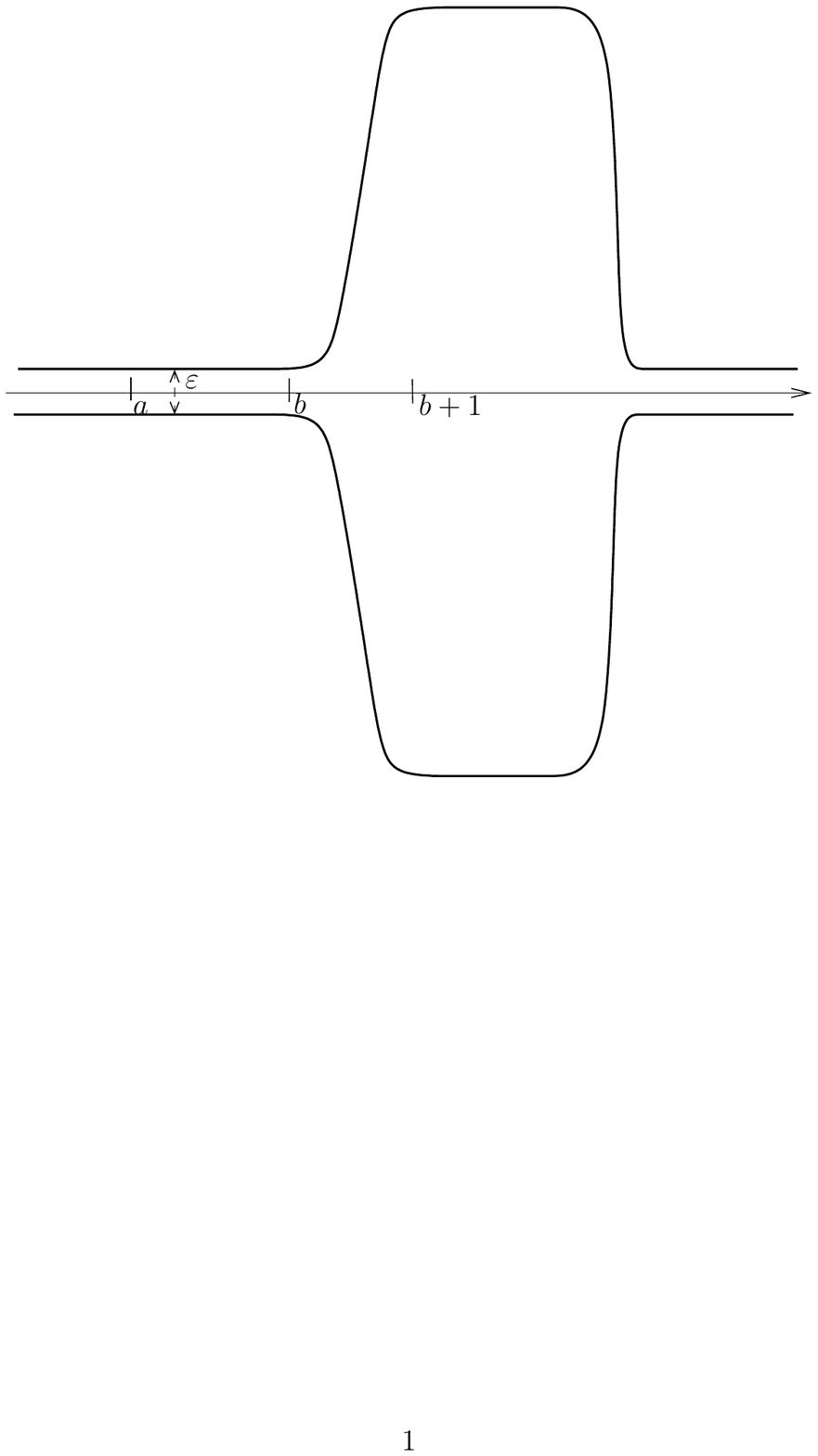}\hspace{1cm}
\includegraphics[scale=0.33]{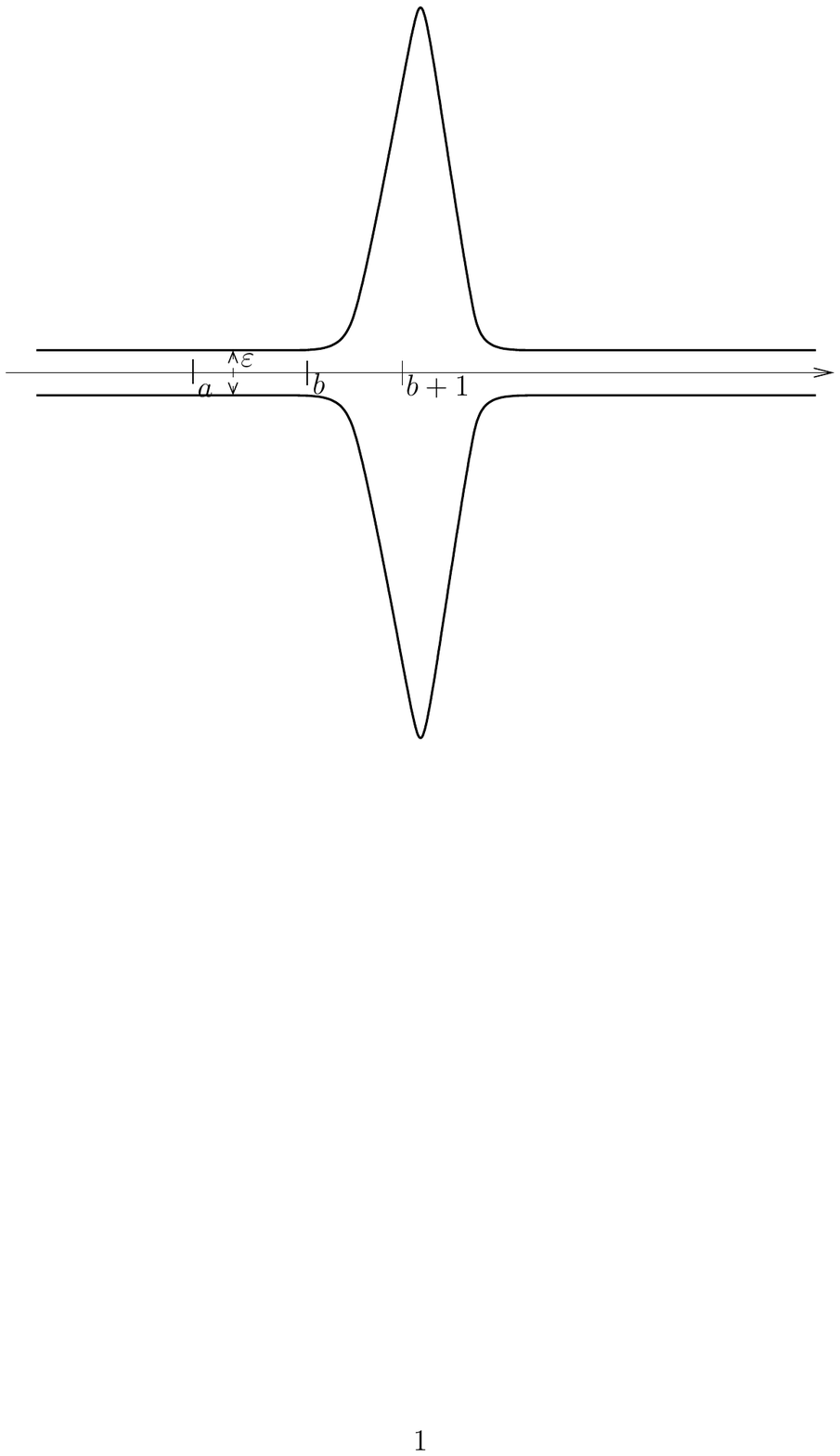}
\caption{Examples of domains in which the wave is blocked at the passage into the wide region, for small enough $\epsilon>0$.}\label{widennarrow}
\end{center}
\end{figure}

\begin{Rk}
Notice that rescaling $f$ into $\lambda f$ for some $\lambda$ positive, then $\epsilon$ becomes $\ds{\frac{\epsilon}{\sqrt{\lambda}}}$, which means that if we increase the amplitude of $f$ then, the threshold $\epsilon$ decreases and blocking will take place in thinner channels.
Furthermore, if we add a diffusion coefficient $D>0$ in front of the second order term (scaled to 1 in this paper) then $\epsilon$ becomes $\sqrt{D}\epsilon$, which means that if $D$ increases then the solution is blocked for larger diameters. 

The blocking phenomenon as well as these scalings can be interpreted in the following manner. As the species moves through a narrow passage, it all of a sudden emerges into a widen open space. As it emerges, diffusion disperses the species, possibly to lower values of density where the reaction becomes adverse. Therefore, too narrow a passage does not allow for the species to reconstitute a strong enough basis to invade the right hand half of the domain and the invasion wave is stuck at this passage. Clearly, a large diffusion accentuates this problem, while a stronger $f$, by allowing large densities to transit through the channel has the opposite effect.
\end{Rk}
Now considering domains that are not narrow, we show that this does not happen if the cross section is large enough. We prove that when $\Omega$ contains a straight cylinder in the $x_1$-direction of radius large enough, there always is propagation.
\begin{Th}[Axial partial propagation]\label{Thmpartial}
Under the same assumptions as in Theorem \ref{existenceThm}, there exists $R_0>0$ such that if $\R\times B'_R \subset \Omega$ for an $R>R_0$, the unique solution $u$ of \eqref{problem} satisfying \eqref{condinf}, propagates along the $x_1$-axis in the sense of Definition \ref{axialprop}. That is 
$$u(t,\cdot)\to u_\infty\text{ in } \Omega \text{ as }t\to+\infty \text{ with  } \inf_{\R\times B'_R} u_\infty>0.$$
\end{Th}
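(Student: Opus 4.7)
My plan is to construct, for every axial point $\xi$ of the embedded straight cylinder $\R\times B'_R$, a compactly supported stationary subsolution of \eqref{problem} centered at $\xi$, and then use a sliding/continuity argument to show that the limit $u_\infty$ dominates each of them. By the classical Aronson--Weinberger result for bistable reactions, there exists $R_0>0$ such that for every $\rho>R_0$ the Dirichlet problem
$\Delta V+f(V)=0$ in $B_\rho\subset\R^N$, $V=0$ on $\partial B_\rho$,
admits a positive radial classical solution $V_\rho$ with $\max V_\rho\in(\theta,1)$; extending $V_\rho$ by $0$ to $\R^N$ gives a Lipschitz weak stationary subsolution $\tilde V_\rho$ of the reaction equation, thanks to the sign of $\partial_\nu V_\rho$ on $\partial B_\rho$. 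Fix $\rho\in(R_0,R)$. For each $\xi\in\R\times B'_{R-\rho}$, the translate $v_\xi(x):=\tilde V_\rho(x-\xi)$ is supported in $\overline{B_\rho(\xi)}\subset\R\times B'_R\subset\Omega$, so it is a weak stationary subsolution of \eqref{problem}, the Neumann condition on $\partial\Omega$ being trivially satisfied.

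Next I would use the uniform convergence $u(t,\cdot)-\phi(x_1-ct)\to 0$ as $t\to-\infty$ and $\phi(-\infty)=1$ to initialise far to the left: there exist $t_0\in\R$ and $\xi^0=(\xi^0_1,0)$ with $\xi^0_1$ sufficiently negative such that $u(t_0,\cdot)\geq\max V_\rho$ on $\overline{B_\rho(\xi^0)}$, and therefore $u(t_0,\cdot)\geq v_{\xi^0}$ in $\Omega$. Parabolic comparison applied to the stationary weak subsolution $v_{\xi^0}$ then preserves this inequality for all $t\geq t_0$, and passing to the limit yields $u_\infty\geq v_{\xi^0}$. In particular $u_\infty\not\equiv 0$, so the strong maximum principle together with the Hopf lemma (used through the Neumann condition on $\partial\Omega$) gives $u_\infty>0$ everywhere on $\overline{\Omega}$. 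Now set
$\mathcal S:=\{\xi\in\R\times B'_{R-\rho}\,:\,u_\infty\geq v_\xi \text{ in }\Omega\}$.
Then $\mathcal S$ is non-empty by the previous step and is clearly closed under pointwise convergence. If in addition $\mathcal S$ is open in the connected set $\R\times B'_{R-\rho}$, then $\mathcal S=\R\times B'_{R-\rho}$, and evaluating at $x_0=\xi$ yields $u_\infty(\xi)\geq v_\xi(\xi)=\max V_\rho>0$ uniformly in $\xi$; taking any $R'\in(0,R-\rho)$ this gives $\inf_{\R\times B'_{R'}}u_\infty\geq\max V_\rho>0$, which is axial partial propagation in the sense of Definition \ref{axialprop}.

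The main obstacle will be the openness of $\mathcal S$. Given $\xi^*\in\mathcal S$, set $w:=u_\infty-v_{\xi^*}\geq 0$. In the open ball $B_\rho(\xi^*)$, where both $u_\infty$ and $v_{\xi^*}$ are classical solutions of $\Delta\cdot+f(\cdot)=0$, the function $w$ satisfies a linear elliptic equation with bounded zeroth-order coefficient, hence by the strong maximum principle either $w>0$ in $B_\rho(\xi^*)$ or $w\equiv 0$ there; the latter would force $u_\infty=0$ on $\partial B_\rho(\xi^*)$, contradicting the strict positivity of $u_\infty$. So $w\geq\varepsilon_0>0$ on the compact set $\overline{B_\rho(\xi^*)}$ and, by continuity, on some neighborhood of it. To perturb $\xi^*$ into a nearby $\xi$, I would split $\{v_\xi>0\}=B_\rho(\xi)$ into the overlap $B_\rho(\xi)\cap B_\rho(\xi^*)$, where $u_\infty-v_\xi\geq\varepsilon_0-C|\xi-\xi^*|$ by the Lipschitz dependence of $v$ on the translation parameter, and the thin crescent $B_\rho(\xi)\setminus B_\rho(\xi^*)$, which sits within distance $O(|\xi-\xi^*|)$ of $\partial B_\rho(\xi)$ and on which $v_\xi$ is uniformly small (it vanishes at $\partial B_\rho(\xi)$). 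Choosing $|\xi-\xi^*|$ small enough makes $u_\infty\geq v_\xi$ on both pieces, which establishes the openness of $\mathcal S$ and closes the proof.
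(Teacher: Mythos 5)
Your proof is correct, but it takes a genuinely different route from the paper's. The paper works with the ground state $z_R$ of the Dirichlet problem in the full ball $B_R\subset\R\times B'_R$ and slides it only along the $x_1$-axis (equivalently, translates $u_\infty$ by $-a$ and decreases $a$ from $+\infty$), using $u_\infty\to1$ as $x_1\to-\infty$ to initialise; this yields $u_\infty(x)\geq w_R(|x'|)$ (Lemma \ref{comppropcyllemma}), a bound that degenerates as $|x'|\to R$, and the uniform positivity $\inf_{\R\times B'_R}u_\infty\geq\delta$ is then recovered by a separate compactness argument combining the strong maximum principle and the Hopf lemma at the Neumann boundary (Lemma \ref{uwdelta}). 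You instead take a strictly smaller ball $B_\rho$ with $R_0<\rho<R$ and slide its centre over all of $\R\times B'_{R-\rho}$ via an open--closed connectedness argument, whose openness step you justify correctly (strong maximum principle in the overlap, Lipschitz decay of $v_\xi$ near $\partial B_\rho(\xi)$ against local positivity of $u_\infty$ in the crescent). This gives the stronger pointwise bound $u_\infty\geq\max V_\rho>\theta$, but only on the shrunken cylinder $\R\times B'_{R-\rho}$: that suffices for Definition \ref{axialprop}, which only asks for some radius, though not for the literal display $\inf_{\R\times B'_R}u_\infty>0$ with the same $R$; to recover the bound up to $|x'|=R$ you would still need an argument of the type of Lemma \ref{uwdelta}. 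It is worth noting that your multidirectional sliding is essentially the technique the paper reserves for its complete-invasion results (the sliding cylinders of Section \ref{completeprop} and the sliding ball assumption of Section \ref{completepropwidening}), so your approach buys a lower bound above $\theta$ --- the natural launching point for complete invasion --- at the cost of a slightly smaller cylinder, whereas the paper's two-step argument is what gives uniform positivity on the whole embedded cylinder.
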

We give more details about how this constant is determined in section \ref{propgeneraldom}.
\begin{Rk}
Notice that in this theorem the invasion may be partial. Indeed depending on the shape of $\Omega$, there could be subdomains where the solution $u_\infty$ is close to 0 (see \cite{BHM, B} and Figure \ref{proppartial} in section \ref{propgeneraldom} of this paper for examples of domains where it happens).
\end{Rk}

In the following theorems, we focus on the geometry $\Omega$ such that the invasion is complete, i.e. $u_\infty\equiv 1$.
The next theorems are proved using a sliding method (either sliding a ball or a cylinder), which is inspired from \cite{BNslide}.
\begin{Th}[Complete propagation in ``star-shaped" domains]\label{Thmslidingcylinder}
We assume that the conditions of Theorem \ref{Thmpartial} are fulfilled. 
Moreover we assume that at each point on the boundary $x=(x_1,x')\in\pO$, the outward unit normal $\nu$ makes a non-negative angle with the direction $x'$. More precisely, writing 
$\nu= (\nu_1, \nu')$ (with $\nu'\in\Rm$), we assume:
\be\label{normal}
\nu' \cdot  x' \geq 0 \qtext{for all points} x=(x_1,x') \in \pO.
\ee 
Then, the invasion is complete, namely, $u_\infty\equiv 1$ in $\O$.
\end{Th}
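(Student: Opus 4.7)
The strategy is to combine the positive lower bound for $u_\infty$ on the inner cylinder $\R\times B'_R$ (given by Theorem~\ref{Thmpartial}) with a transverse-radial sub-solution of the Neumann problem on $\O$, whose existence is granted precisely by the star-shape hypothesis \eqref{normal}. The key observation is the following: if $v(t,x)=w(t,|x'|)$ is a smooth function of $t$ and $r=|x'|$ alone, non-increasing in $r$, then at any boundary point $x=(x_1,x')\in\pO$,
\[
\pn v \;=\; \nu'\cdot\nabla_{x'}v \;=\; \frac{w_r(t,|x'|)}{|x'|}\,(\nu'\cdot x')\;\leq\;0,
\]
since $w_r\leq 0$ and $\nu'\cdot x'\geq 0$ by \eqref{normal}. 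This is the sole place where \eqref{normal} enters: it says that every transverse-radially non-increasing function automatically satisfies a Neumann sub-condition on $\pO$.

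Next, letting $\delta:=\inf_{\R\times B'_R}u_\infty>0$ from Theorem~\ref{Thmpartial}, I would fix $\delta'\in(0,\delta)$ and choose a smooth, non-increasing function $w_0:[0,+\infty)\to[0,1]$ with $w_0\equiv\delta'$ on $[0,R/2]$ and $\mathrm{supp}\,w_0\subset[0,R]$. Let $w(t,r)$ be the solution of the radial bistable Cauchy problem in $\R^{N-1}$,
\[
w_t = w_{rr} + \frac{N-2}{r}\,w_r + f(w),\qquad w_r(t,0)=0,\qquad w(0,\cdot)=w_0.
\]
Parabolic moving-plane arguments for radial data keep $w_r(t,\cdot)\leq 0$ for all $t\geq 0$, and the Aronson--Weinberger invasion theorem for bistable $f$ with $\int_0^1 f>0$ gives $w(t,r)\to 1$ as $t\to+\infty$, locally uniformly in $r$, provided that $(\delta',R)$ lies above the ignition threshold in $\R^{N-1}$.

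Setting $v(t,x):=w(t,|x'|)$ on $[0,+\infty)\times\overline\O$, one computes $\Delta v = w_{rr}+\tfrac{N-2}{|x'|}\,w_r$, hence $v_t-\Delta v = f(v)$ in $\O$, and $\pn v\leq 0$ on $\pO$; so $v$ is a sub-solution of the parabolic Neumann problem. The inequality $v(0,\cdot)\leq u_\infty$ holds in $\O$: for $|x'|\leq R$ because $v(0,x)\leq \delta'<\delta\leq u_\infty(x)$ together with $\R\times B'_R\subset\O$, and elsewhere because $v(0,x)=0$. Viewing $u_\infty$ as its own stationary Cauchy solution, the parabolic comparison principle (a sub-solution with $\pn v\leq 0$ against a solution with $\pn u_\infty=0$) yields $u_\infty(x)\geq v(t,x)$ for every $t\geq 0$ and $x\in\O$; sending $t\to+\infty$ gives $u_\infty\equiv 1$. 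The main obstacle is reconciling the constants: the lower bound $\delta$ and the radius $R$ produced by Theorem~\ref{Thmpartial} must jointly exceed the Aronson--Weinberger ignition threshold for the radial bistable equation in $\R^{N-1}$, which in practice fixes the size of $R_0$ so that both the axial propagation of Theorem~\ref{Thmpartial} and the transverse invasion sketched above can be activated by the same cylinder $\R\times B'_R$.
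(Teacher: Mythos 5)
Your use of the star-shape condition \eqref{normal} is exactly the right mechanism, and it is the same one the paper exploits: for any function of $r=|x'|$ alone that is non-increasing in $r$, the inequality $\nu'\cdot x'\geq 0$ turns the Neumann condition into the one-sided inequality needed for a subsolution. The comparison of $v(t,x)=w(t,|x'|)$ against the stationary solution $u_\infty$ is also sound. The gap is in the one step that carries all the weight: the claim that $w(t,\cdot)\to 1$. For a bistable nonlinearity this transverse invasion is \emph{not} unconditional; it requires the initial datum to exceed the unstable zero $\theta$ on a sufficiently large ball (if $w_0\leq\theta$ everywhere, then the constant $\theta$ is a supersolution and $w(t,\cdot)\leq\theta$ for all $t$; if $w_0\leq\theta-\epsilon$ then $w\to 0$ by comparison with the ODE $\dot y=f(y)$). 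You choose $\delta'$ below $\delta=\inf_{\R\times B'_R}u_\infty$, but Theorem \ref{Thmpartial} only guarantees $\delta>0$, not $\delta>\theta$: the constant produced by Lemma \ref{uwdelta} comes from a compactness argument with no quantitative control, and near $|x'|=R$ the lower bound $w_R(|x'|)+\delta$ degenerates to $\delta$. So as written your radial Cauchy problem may converge to $0$, and the argument collapses. Enlarging $R_0$ does not repair this directly, since the uniform infimum over the whole cylinder is not usefully monotone in $R$; you would instead need the pointwise bound $u_\infty\geq w_R(|x'|)$ of Lemma \ref{comppropcyllemma} together with the fact (from \cite{BL}) that $w_R\to 1$ locally uniformly as $R\to\infty$, so as to find a sub-cylinder $\R\times B'_\rho$ on which $u_\infty>\theta+\epsilon$ with $\rho$ exceeding the $(N-1)$-dimensional ignition radius at level $\theta+\epsilon$. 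That can be made to work, but it proves the statement only for a possibly larger $R_0$ than the one of Theorem \ref{Thmpartial} to which the theorem refers.

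For contrast, the paper's proof avoids the ignition threshold entirely. It slides translated copies $\psi_{h,e}(x')=w_R(|x'-he|)$ of the $N$-dimensional Dirichlet ground state in the transverse directions, takes $\phi_h=\max_{e\in\Sm}\psi_{h,e}$, and shows by an elliptic sliding argument --- strong maximum principle at interior touching points, and at boundary touching points the computation $\pn\phi_{h}=-|w_R'|\,\tfrac{x'}{|x'|}\cdot\nu'\leq 0$ using \eqref{normal} --- that $u_\infty\geq\phi_h$ for every $h\geq 0$. Taking $h=|x'|$ gives $u_\infty\geq w_R(0)>\theta$ at \emph{every} point of $\O$, using only the value of the ground state at its center, and $u_\infty\equiv 1$ then follows from a translation and strong-maximum-principle argument at the infimum. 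The elliptic sliding therefore needs only $R>R_0$ as in Theorem \ref{Thmpartial}, whereas your parabolic route requires the additional quantitative input described above.
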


We call a domain that satisfies the property \pref{normal}  a {\em star-shaped domain with respect to the $x_1$--axis}. One can look at Figure \ref{exomega} (upper left, lower left and middle) for examples of such domains in dimension 2. More examples and counterexamples of such domains are given at the end of section \ref{completeprop}.

The previous Theorem covers a large range of geometries for $\Omega$, however the following Theorem completes this result for another type of geometry. It relies on a different proof.
% and this proof could be easily adapted for any other general domain $\Omega$ satisfying the conditions of Theorem \ref{Thmpartial}. 
\begin{Th}[Complete propagation into increasing domains]\label{increaseThmprop}
Under the same assumptions as in Theorem \ref{existenceThm}, there exists $R_1>0$ such that if\begin{gather*}
\R\times B'_{R}\subset \Omega, \text{ for } R>R_1,\\
\Omega_L^r :=\left\{(x_1,x')\in\Omega,\: x_1>L\right\} \text{ is convex},\\
\{x\in \Omega, \; x_1<L+R\}\subset \R \times B_C,\\
\forall\: x=(x_1,x')\in\partial \Omega \quad  x_1<L+R \Rightarrow \nu_1(x)\leq 0
\end{gather*}
for some $L,C>0$, then the invasion is complete in the sense of Definition \ref{completepropdef}, namely $u_\infty\equiv1$ in $\Omega.$
Here $\nu_1(x)$ is the first component of the outward unit normal at $x$.
\end{Th}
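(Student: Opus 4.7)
My plan is to combine Theorem~\ref{Thmpartial} with a dynamical spreading argument to force $u_\infty \equiv 1$ throughout $\Omega$.

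First I would enlarge $R_1$ if necessary so that, for $R>R_1$: (i) Theorem~\ref{Thmpartial} gives $u_\infty \geq \alpha > 0$ on $\R \times B'_R$; and (ii) the Dirichlet problem $\Delta v + f(v) = 0$ in $B_{R_1}\subset\R^N$ with $v = 0$ on $\partial B_{R_1}$ admits a positive radial solution $v^*$ with $v^*(0) > \beta$, where $\beta$ is the energy threshold in~\eqref{beta}. The existence of such $v^*$ for $R_1$ large is classical (Berestycki--Lions). For any $x_0$ on the axis one has $B_{R_1}(x_0) \subset \R \times B'_R \subset \Omega$, so a standard sliding-ball / sub--super-solution iteration applied to $u_\infty$ and $v^*(\cdot - x_0)$ (extended by $0$ outside the ball) yields $u_\infty \geq v^*(\cdot - x_0)$ in $B_{R_1}(x_0)$; in particular $u_\infty(x_0) \geq v^*(0) > \beta$ at every axial point.

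Next, consider the parabolic Cauchy problem on $\Omega$ with Neumann boundary condition and initial datum a smooth compactly supported subsolution $u_0\leq u_\infty$ satisfying $u_0\geq v^*(\cdot-x_0)$ in a ball around the axis. The corresponding solution $\tilde u(t,\cdot)$ is monotone increasing in $t$, remains below $u_\infty$, and converges as $t\to+\infty$ to a stationary solution $\tilde u_\infty\leq u_\infty$; it therefore suffices to show $\tilde u_\infty\equiv 1$ in $\Omega$, from which $u_\infty\geq\tilde u_\infty \equiv 1$ follows. This is established separately on $\Omega_L^r$ and on the bulb. In the convex region $\Omega_L^r$, I use a sliding ball argument in the spirit of~\cite{BNslide} and the proof of Theorem~\ref{Thmslidingcylinder}: by convexity, a ball $B_{R_1}(y)$ can be continuously translated from a position inside $\R\times B'_R$ to any $y\in\Omega_L^r$ with $B_{R_1}(y)\subset\Omega$, and the comparison with $v^*(\cdot-y)$ at each intermediate position gives $\tilde u_\infty(y)>\beta$; a bistable spreading (Aronson--Weinberger) argument combined with the strong maximum principle applied to $1-\tilde u_\infty$ then upgrades this to $\tilde u_\infty\equiv 1$ on $\Omega_L^r$. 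In the bounded bulb $\{x_1<L+R\}\cap\Omega\subset\R\times B_C$, the condition $\nu_1\leq 0$ (widening to the right) allows sliding balls leftward from the interface $\{x_1=L+R\}$, on which $\tilde u_\infty\equiv 1$ by the previous step; the widening rules out blockage at narrow passages, so $\tilde u_\infty\equiv 1$ throughout the bulb as well.

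The main technical difficulty is coupling the local sliding-ball argument (which yields $u_\infty$ strictly above the energy threshold $\beta$ at each axial point) with the global spreading argument forcing $u_\infty\equiv 1$. The choice $v^*(0)>\beta$ is what activates the ``more stable than $0$'' assumption~\eqref{posf}, placing the dynamics in the basin of attraction of $1$ for the bistable parabolic flow. The sliding-ball arguments must also respect the Neumann boundary condition, which is handled via reflection across supporting hyperplanes in $\Omega_L^r$ (using convexity) and via the widening condition $\nu_1\leq 0$ in the bulb; verifying that the reflection / chain-of-balls construction actually covers \emph{all} of $\Omega$ without escaping to $\partial\Omega$ is the delicate geometric verification at the heart of the proof.
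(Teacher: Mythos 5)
Your first half is essentially the paper's: lower-bound $u_\infty$ by a large Dirichlet ground state $v^*$ (with $v^*(0)>\beta$) slid along the axis, then propagate this bound through the convex region $\Omega_L^r$ by translating the ball. Two remarks there: the paper does not need to restrict to points $y$ with $B_{R_1}(y)\subset\Omega$ — convexity of $\Omega_L^r$ gives the pointwise inequality $(b-y)\cdot\nu\geq 0$ for every $b\in \overline{B_R(y)}\cap\partial\Omega$, which is exactly what makes the Hopf-lemma step of the sliding-ball comparison work up to the boundary, so no reflection construction is needed (and reflection across supporting hyperplanes does not cleanly preserve the Neumann problem on a curved convex boundary anyway). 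Your final upgrade from ``$u_\infty>\theta$ everywhere'' to ``$u_\infty\equiv 1$'' is fine in spirit; the paper does it by translating along a minimizing sequence for $\inf u_\infty$ and contradicting either Hopf's lemma or $f(\inf u_\infty)>0$, with no need to invoke Aronson--Weinberger.

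The genuine gap is in the bulb $\{x\in\Omega,\ x_1<L+R\}$. You propose to ``slide balls leftward from the interface $\{x_1=L+R\}$,'' asserting that $\nu_1\leq 0$ makes this work. It does not: the sliding-ball comparison requires $(b-\gamma(t))\cdot\nu\geq 0$ at every boundary point $b$ touched by the moving ball, and the hypothesis $\nu_1\leq 0$ gives no control on the transverse components $\nu'$. The theorem is precisely designed to cover domains (e.g.\ the left picture of Figure \ref{noslidcylfigure}) that are \emph{not} star-shaped with respect to the axis, and for those the chain of balls either cannot reach points deep in the bulge or violates the sign condition where it grazes $\partial\Omega$; ``the widening rules out blockage'' is an assertion, not an argument. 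The paper's mechanism is different: it truncates the nonlinearity into $f_\delta$ with upper zero $w_R(0)-2\delta>\beta$ (so that $\int_0^{w_R(0)-2\delta}f_\delta>0$ and the associated planar front $\phi_\delta(x_1-c_\delta t)$ travels \emph{leftward}, $c_\delta<0$), checks that $v(t,x)=\phi_\delta(x_1-c_\delta t)$ is a subsolution on the bulb because $\partial_\nu v=\phi_\delta'\,\nu_1\leq 0$ there — this is the actual role of the hypothesis $\nu_1\leq 0$ — and runs a parabolic comparison against the stationary supersolution $u_\infty$ on $\{-M<x_1<L+R\}$, using the already-established bounds $u_\infty\geq w_R(0)-\delta$ on both ends and $u_\infty\geq\epsilon>0$ on the bounded set $\Omega\cap\{x_1<L+R\}\subset\R\times B_C$ to position $\phi_\delta$ initially below $u_\infty$. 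Letting $t\to\infty$ sweeps the value $w_R(0)-2\delta>\theta$ across the whole bulb. Without this backward-travelling-front subsolution (or an equivalent device), your argument does not close.
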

In the proof of this theorem we use a solution of the stationary elliptic equation in a ball with Dirichlet conditions. We then slide this ball in $(\R\times B'_{R})\cup\Omega_L^r$ to get a lower estimate on the solution $u_\infty$ in this domain and then conclude with the maximum principle and Hopf Lemma. This proof could be adapted to somewhat more general domains as soon as it satisfies a sliding ball assumption, i.e every point in the domain could be touch, from within the domain, by a ball of radius $R-\delta$ for some $\delta>0$ such that for $x\in\partial\Omega\cap B_{R}$, $\nu(x)\cdot x\geq0$. 

%% Fin de lecture 28/12/14
We thus point out two different types of behaviour of the solution in the case of widening cylinders. If the diameter of the cylinder before the change in geometry is small and the change is abrupt, we prove that there is blocking, whereas if the cylinder is wide enough before the change of geometry there is propagation even if the domain abruptly increases. One question that remains open is the existence of a sharp threshold, when considering, say a monotone 1-parameter family of domains. The question is thus to know whether the solution is blocked below a critical threshold and propagates above it.

In this paper we consider mainly generalised transition waves. But of course our results and methods bear important consequences for associated Cauchy problems.
\begin{Cor}[The associated Cauchy problem] Let $u$ be the solution of the following Cauchy problem
\begin{equation*}\begin{cases}
\partial_t u(t,x)-\Delta u(t,x)=f(u(t,x)), &\text{for } t\in(0,+\infty),\quad x\in\Omega,\\
\partial_\nu u(t,x)=0,&\text{for } t\in(0,+\infty,) \quad x\in\partial\Omega,\\
u(0,x)=u_0(x),&\text{for } x\in\Omega.
\end{cases}\end{equation*}
Then
\begin{itemize}
\item[$\cdot$] Under the same assumptions as in Theorem \ref{IncreaseThm} on $\Omega$, $f$ and $(\phi,c)$, the propagation of the solution is blocked for any $u_0$, of which the support is contained in the left part of the domain , i.e $supp\{u_0\}\subset\{x\in\Omega,\:x_1<a\}$,
\item[$\cdot$] Under the same assumptions as in either Theorem \ref{decreaseThm}, Theorem \ref{Thmpartial}, Theorem \ref{Thmslidingcylinder} or Theorem \ref{increaseThmprop} on $\Omega$, $f$ and $(\phi,c)$, the propagation results would still hold if the domain $\{x\in\Omega,\: u_0(x)>\theta+\delta\}$ is large enough, for some $\delta>0$.
\end{itemize}
\end{Cor}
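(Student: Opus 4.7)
The plan is to sandwich the Cauchy solution $u$ above and below by time-translates of the entire solution $U$ from Theorem \ref{existenceThm}, using the parabolic maximum principle. Recall that $U$ is monotone in $t$, takes values in $(0,1)$, is approximated by $\phi(x_1-ct)$ as $t\to-\infty$, and converges to the stationary solution $u_\infty$ as $t\to+\infty$.

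For the \emph{blocking} case, I would reuse the stationary supersolution constructed in the proof of Theorem \ref{IncreaseThm}: a function $V\colon\Omega\to[0,1]$ with $-\Delta V\ge f(V)$ in $\Omega$, $\partial_\nu V\ge 0$ on $\partial\Omega$, equal to $1$ on $\{x_1\le a\}$ and tending to $0$ as $x_1\to+\infty$. Since $0\le u_0\le 1$ and $\mathrm{supp}\,u_0\subset\{x_1<a\}$, we have $u_0\le V$ pointwise, so by comparison $u(t,\cdot)\le V$ for all $t\ge 0$. Parabolic compactness together with a monotonicity argument in the spirit of the proof of Theorem \ref{existenceThm} then gives convergence $u(t,\cdot)\to w_\infty$ with $w_\infty\le V$, hence $w_\infty(x)\to 0$ as $x_1\to+\infty$, as required.

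For the \emph{propagation} case I would use $U$ as a subsolution instead. For $t_0\ll 0$, $U(t_0,\cdot)$ is uniformly close to $\phi(x_1-ct_0)$, which is essentially $0$ on any half-line $\{x_1>ct_0+M\}$ and close to $1$ only for $x_1$ extremely negative. The key step is: if $\{u_0>\theta+\delta\}$ contains a ball $B_\rho(x_\star)\subset\R^-\times\omega$ with radius $\rho$ exceeding the bistable invasion threshold in the straight left cylinder, then there exists $T>0$ such that $u(T,\cdot)\ge U(t_0,\cdot)$ on $\Omega$, provided $-t_0$ is chosen large enough that $U(t_0,\cdot)$ is concentrated to the left of the region already invaded by $u$ at time $T$. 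Parabolic comparison then gives $u(T+s,\cdot)\ge U(t_0+s,\cdot)$ for all $s\ge 0$; sending $s\to+\infty$ and invoking the appropriate one of Theorems \ref{decreaseThm}, \ref{Thmpartial}, \ref{Thmslidingcylinder}, \ref{increaseThmprop} transfers the corresponding propagation conclusion to $u$.

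The main obstacle is the hair-trigger ingredient in the propagation case, namely quantifying when $\{u_0>\theta+\delta\}$ is ``large enough'' to ignite full invasion of the left cylinder $\R^-\times\omega$. This is classical for bistable reaction--diffusion in cylinders with Neumann cross-section, and would be handled by constructing a compactly supported radial Fife--McLeod-type subsolution $\underline u_0\le u_0$ whose evolution at some finite time $T$ dominates an arbitrarily far-left translate of $\phi$. Once this quantification is in hand the remaining steps reduce to the comparison arguments described above, the existence of the limit $w_\infty$ for $u$ following from standard parabolic compactness combined with the lower bound by the convergent family $U(t_0+s,\cdot)$.
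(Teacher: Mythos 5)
The blocking half of your argument is correct and is exactly what the paper's construction delivers: the extended stationary supersolution $\tilde w_\infty$ of Section \ref{Rinfinity} equals $1$ on $\{x_1\le a\}$ and tends to $0$ as $x_1\to+\infty$, so $u_0\le\tilde w_\infty$ whenever $\operatorname{supp}u_0\subset\{x_1<a\}$, and the parabolic comparison principle gives $u(t,\cdot)\le\tilde w_\infty$ for all $t>0$, which is all that blocking requires. (Your appeal to ``monotonicity'' to get convergence of $u(t,\cdot)$ is not available for a general Cauchy datum, but it is also not needed: the uniform-in-time bound by $\tilde w_\infty$ already forces any limit value to vanish as $x_1\to+\infty$.)

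The propagation half has a genuine gap at its central step. You propose to find a finite time $T$ with $u(T,\cdot)\ge U(t_0,\cdot)$ on all of $\Omega$, where $U$ is the entire solution of Theorem \ref{existenceThm}. This ordering can never be achieved for, say, compactly supported $u_0$: since $U$ is squeezed between the sub- and supersolutions \pref{subsol}--\pref{supersol}, one has $U(t_0,x)\to 1$ as $x_1\to-\infty$ for \emph{every} $t_0$, whereas for compactly supported data the Cauchy solution satisfies $u(T,x)\to 0$ as $x_1\to-\infty$ at every finite time $T$ (compare with $e^{LT}$ times the Neumann heat semigroup applied to $u_0$, using $f(s)\le Ls$). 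So the comparison fails on the unbounded left end of the cylinder, precisely where $U(t_0,\cdot)$ is not small. The way around this --- and the route the paper's machinery actually supports --- is to use \emph{compactly supported} subsolutions rather than $U$ itself: if $\{u_0>\theta+\delta\}$ contains a large enough ball, the classical bistable ignition result (together with $\int_0^1 f>0$) yields a time $T$ at which $u(T,\cdot)$ dominates the Dirichlet ground state $z_R$ of \eqref{eqphi} centred at some point of the axis, extended by $0$; since that extension is a stationary generalised subsolution, $u(t,\cdot)\ge z_R(\cdot-x_\star)$ for all $t\ge T$, and then the sliding arguments of Lemma \ref{comppropcyllemma} and of Theorems \ref{Thmpartial}, \ref{Thmslidingcylinder}, \ref{increaseThmprop} (respectively the subsolution $\phi(x_1-ct)$ of Theorem \ref{decreaseThm}, suitably truncated \`a la Fife--McLeod so as to be compactly supported) apply verbatim to $\liminf_{t\to\infty}u(t,\cdot)$. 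Your closing paragraph gestures at such a Fife--McLeod subsolution, but only as a tool to get under $U(t_0,\cdot)$, which, as explained, is the wrong target; the subsolutions must replace $U$, not feed into it.
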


This article is organised as follows. In section \ref{Motivation} we detail the motivations from biology and medicine behind this problem. We indicate the interpretations of our findings in these contexts.
We prove the existence of transition fronts (Theorem \ref{existenceThm}) in section \ref{existence section}. We devote section \ref{decreasesection} to the proof of Theorem \ref{decreaseThm} on the propagation of fronts in decreasing domains. 
Section \ref{abruptincreasesection} is devoted to the proof of Theorem \ref{IncreaseThm}, that is the blocking of the front if $\Omega$ contains a narrow passage. Then we derive a sufficient condition under which there is (possibly partial) propagation in section \ref{propgeneraldom}. There, we prove Theorem~\ref{Thmpartial}. Next we turn to further complete propagation results. We prove Theorem~\ref{Thmslidingcylinder} in 
section~\ref{completeprop} and Theorem~\ref{increaseThmprop} in section \ref{completepropwidening}.

%%%%%%%%%%%%%%%%%%%%%%%%%%%%%%%%%%%%%%%%%%%%%%%%%%%%%%%%%%%%%% Relecture du 23/12/14 - reprise 27/12/14 - H

\section{Motivations from medicine and biology and interpretations of the results}\label{Motivation}
%The effect of geometry on propagation of waves arises as a natural question in 
% several contexts of modelling in medicine and biology.
Equation \eqref{problem} is classical in physics, for the representation of phase transitions, and in biology. 
A particularly important motivation for studying the effect of geometry on propagation of waves in equation \eqref{problem} comes from the modelling of \textit{Cortical Spreading Depressions} (CSDs) in the brain. These are transient and large depolarisations of the membrane of neurons that propagate slowly (with a speed of the order of 3mm/min) in the brain. 
CSDs are due to abnormal ionic exchanges between the intra- and extra-cellular space of the neuronal body that slowly diffuse in the brain. There are two stable states, 
the normal polarised rest state and the totally depolarised state, with a threshold on the ionic disturbances for passage from one stable state to the other. After a depolarisation lasting 3 or 4 minutes, several mechanisms take place that repolarise the neurons. 

Mathematical studies of CSDs focus generally on the depolarisation phase. Having in view this phase, CSDs are modelled by a bistable reaction-diffusion equation like \eqref{problem}, 
where $0\leq u\leq 1$ represents the degree of depolarisation. In this context, $u\equiv 0$ stands for the normal polarised state and $u\equiv 1$ represents the completely depolarised state. Thus, a depolarisation is represented by an invasion of the state $u\equiv 1$.
The complete depolarisation mechanism only takes place in the grey matter - the part of the brain that contains the neuronal bodies - while simple ionic disturbances 
%%(and thus the depolarisation)  QUESTION ˆ G.
diffuse everywhere %% Question ˆ G
and are absorbed in the white matter - the part of the brain where no neuron body and only axons are to be found. In this context, the domain $\Omega$ in equation \eqref{problem} can be thought of as representing a portion of grey matter of the brain. The Neumann boundary conditions here is a classical simplification resulting from considering that the grey matter is isolated.

The question of existence of and mechanisms to break  CSDs in human brain is of great importance for the understanding of strokes or migraines with aura among other pathologies. CSDs were first observed by Le\~ao in 1944 \cite{Leao}.
%% QUestion ˆ G : Chez l'homme ?
% When neurons are depolarised, they cannot propagate nerve impulses and this generates several symptoms. For example, CSDs are suspected of being responsible for the aura during migraines with aura. The aura is a set of hallucinations, mainly visual and sensory, which appears during some migraine attacks. In ischemic strokes in rodent, it has been proved that each CSD increases the neurological damage by approximately 30\% \cite{Mies93} and therapies aiming at blocking the appearance of CSDs have shown very promising results in rodent \cite{DeKeyser99, Nedergaard95}. Unfortunately they turned out to be inefficient in humans. 
%Despite intensive research, CSDs have never been clearly observed in the human brain during strokes. 
%% Question ˆ G: Seulement dans le cas des AVC, mais dans d'autres situations %elles ont ŽtŽ observŽes (cf. remarque de Jack Cowan)  
%%% Fin 27/12/14
When neurons are depolarised, they cannot propagate nerve impulses and this generates several symptoms. For example, CSDs are suspected of being responsible for the aura during {\em migraines with aura} \cite{Lauritzen94, Lauritzen01, Hadjikhani01}. The aura is a set of hallucinations, mainly visual and sensory, that appears during some migraine attacks. In ischemic strokes in rodents, it has been proved that each CSD increases the neurological damage by approximately 30\% \cite{Mies93} and therapies aiming at blocking the appearance of CSDs have shown very promising results in rodents \cite{DeKeyser99, Nedergaard95}. Unfortunately they turned out to be inefficient in humans. 

Moreover there are obvious difficulties in observing CSDs in the human brain. These difficulties are related to the complex geometry of the human cortex. With EEG, each electrode records brain electrical activity from a cortical area of several square centimeters whereas the depolarized area during CSDs is much smaller (around few square milimeters). Alternative medical imaging methods record accompagnying symptoms of CSDs (cerebral blood flow increase or decrease) but not the depolarization itself. CSDs can be recorded only in acutely injured human brain undergoing neurosurgery,  using cortical electrode strips similar to the technology applied in epilepsy surgery. But in such a pathological situation, the brain reacts quite differently from a healthy brain.

For many years, it was believed that CSDs were an artifact produced in animal experiments and without significance for human neurological conditions.
Their occurence in human cortex was thus a matter of debate \cite{Aitken91, Gorgi01, Mayevsky96, McLachlan94, Sramka77, Strong02, Back00}. However in the recent years, a medical consensus considering that CSDs occur in human brain has gained ground (see \cite{Lauritzen11} and references therein). Consequently the question of how to block these CSDs became important. For migraines with aura, CSDs have been observed to become extinct when propagating into a sulcus \cite{Bowyer99}, that is in the bottom of a convolution. Generally the grey matter is thicker in a sulcus than in the rest of the cortex. For a CSD during a stroke, understanding how far the CSD will propagate in a specific patient could help in anticipating the dammage caused by the stroke. It could also help in the selection of patients for future clinical trial.

%Thus, their occurence is still a matter of debate \cite{Aitken91, Gorgi01, Mayevsky96, McLachlan94, Sramka77, Strong02, Back00}. 

Mathematical modelling may help in understanding these phenomena since experiments on rodents cannot be conclusive for humans due to the differences of the brain morphologies. 
%Indeed the morphology of the human brain is very different from the rodent brain. 
The grey matter of the human brain composes a thin layer at the periphery of the brain with its thickness subject to abrupt and large variations. The rest of the human brain is composed of white matter. On the opposite, the rodent brain is almost round and entirely composed of grey matter (see Figure \ref{cerveaux}). The variations of the grey matter thickness in the human brain could explain the extinction of CSDs.
%Mathematical modelling may help in understanding these various issues. The difficulties in observing CSDs in the human brain are related to its complex geometry. Only invasive measures could allow us to conclude inambiguously on the existence of CSDs. Except in very special situations (e.g. head surgery), such measures are not available.
%Moreover the morphology of the human brain is very different from the rodent brain. In particular the grey matter of the human brain composes a thin layer at the periphery of the brain with its thicknes subject to large variations. The rest of the human brain is composed of white matter. On the opposite, the rodent brain is almost round and entirely composed of grey matter (see Figure \ref{cerveaux}). The variations of the grey matter thickness in the human brain could explain the inefficiency of therapies aiming at blocking CSDs, 
%since these waves are arlready naturally blocked. 
%by having a blocking effect on the CSD.   
%%%   Question ˆ G
Indeed, as we establish it here, very abrupt variations of the domain may block propagation

\begin{figure}[!h]
\begin{center}
\includegraphics[width=0.3\textwidth]{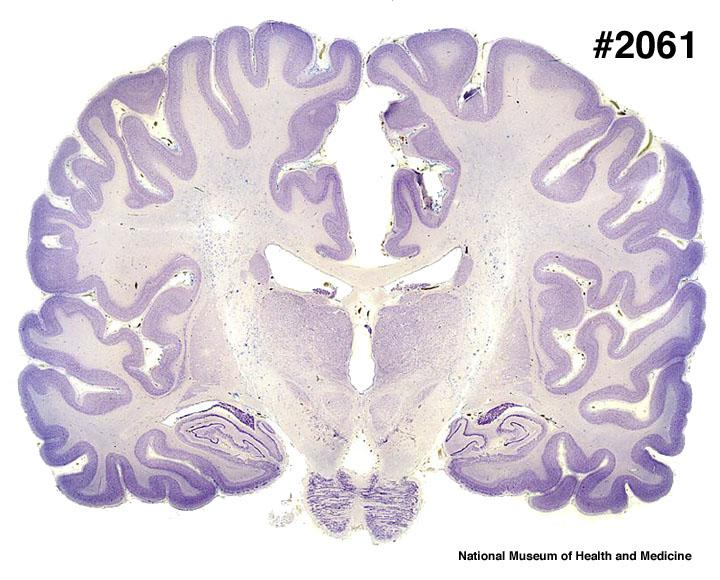}
\hspace{1cm}
\includegraphics[width=0.3\textwidth]{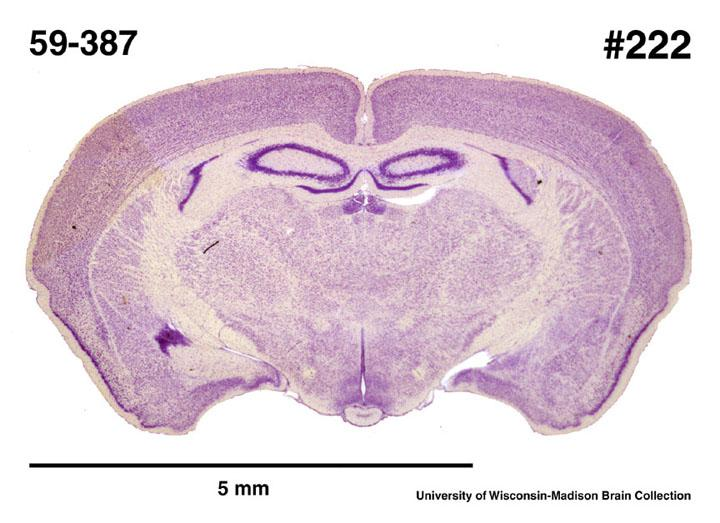}
\caption{Morphology of the brain, cross sections. Left, human brain: grey matter is only a thin layer at the periphery. Right, rodent brain: it is almost entirely composed of grey matter.\\ \small\small{These images are from the University of Wisconsin and Michigan State Comparative Mammalian Brain Collections, and from the National Museum of Health and Medicine, available at the following website http://www.brainmuseum.org/sections/index.html. Preparation of all these images and specimens have been funded by the National Science Foundation, as well as by the National Institutes of Health.} \label{cerveaux}
}

\end{center}
\end{figure}
Chapuisat and Grenier \cite{CG} first stated the hypothesis that the morphology of the human brain may prevent the propagation of CSDs over large distances, unifying several observations by biologists. Several effects may be combined. First, variations of the grey matter thickness may prevent propagation of CSDs. This point is studied numerically in \cite{DDGG} and proved in a special case in \cite{CG}. Furthermore, the absorbing effect of the white matter may stop the propagation of CSDs in areas of the brain where the grey matter is too thin. This phenomenon is studied in \cite{Chap1} and more recently in \cite{BC}. Finally the convolution of the human brain may also enhance the absorption effect of the white matter and thus the blocking of the CSDs. This point is studied numerically in \cite{PMHC09}. Here we prove in a general setting that abrupt transitions from thin to large regions will block waves.

These results can be interpreted in a biological framework. If we assume that the mechanisms triggering CSDs are the same in humans and in rodents, CSDs could be initiated in the human brain but only in areas where the grey matter is large enough and these CSDs will not propagate over large distances since they will be blocked by a sudden enlargement of the grey matter, or if the grey matter becomes too thin as compared to the white matter, or if they go through a sharp convolution of the brain. On the opposite, in rodent brains that are like balls of grey matter, CSDs could be triggered anywhere. 
%This may explain why physicians have not been able to observe CSDs after a stroke in human 
% brains. 
This may explain why it has been difficult to observe CSDs in human brains during a stroke.

In the case of migraines with aura, the symptoms of aura correspond to a dysfunction of certain neurons. When translating the symptoms described by patients on a map of the cerebral areas, it is observed that the dysfunction of neurons propagate at a speed of 3mm/min which makes biologists and physicians think that the aura is due to CSDs. It has been shown that the aura always stops at the bottom of a very deep sulkus.
%Question ˆ G: ne devrait-on pas exoliquer le terme sulkus ? Au fait, qu'est-ce que c'est? 
This phenomenon has been studied numerically in \cite{GDDa} and it strengthens the hypothesis of the role of the morphology of the human brain in the propagation of CSDs. However a complete theoretical study of the influence of the geometry of the grey matter on the propagation of CSDs has never been carried out up to now.

Another motivation for the study of the effect of geomery on propagation in equation~\eqref{problem}
comes from the study of ventricular fibrillations where blocking phenomena in domains with varying thickness is also of great importance. Ventricular fibrillation is a state of electrical anarchy in part of the heart that leads to rapid chaotic contractions of the heart. It is fatal unless a normal rhythm can be restored by defibrillation. One of the medical hypotheses is that ventricular fibrillations are triggered by the generation of a circular excitation wave that are thus trapped in a region of the heart. These waves are not present in a healthy heart and an important medical problem is to understand how these waves can be triggered. Ashman and Hull \cite{AH} suggested that a myocardial infarct could create a circular area of cardiac tissue where excitation waves can propagate {\em only in one way}. Once an excitation wave would enter this area, it would be trapped and the propagation of the wave would trigger ventricular fibrillations. 

Hence an important question is to understand how excitation waves can propagate only in one direction in cardiac fibres. Cranefield \cite{C} asserted that asymmetry of conduction in a normal cardiac fibre occurs due to variations of the diameter of the fibre. This asymmetry is strengthened by depression of excitability. 

Here too, mathematical models of excitation waves in the cardiac tissue may shed light on these phenomena. Cardiac cells as brain cells are excitable and the propagation of the depolarisation initialising the excitation wave can be modelled by a bistable reaction-diffusion equation as in \eqref{problem}. One of the important problems is thus to understand if the geometry of a fibre (or rather a fibre bundle) may trigger an asymmetry in the propagation of the travelling front and if geometrical properties of the domain may even block propagation of travelling waves in one direction but not in the other. This is precisely what we establish here. Indeed, our findings show that in the presence of an abrupt transition, waves can be blocked in one direction (the direction in which the domain is widening) whereas they propagate in the opposite one (the direction in which it is narrowing).  

Related earlier works include numerical studies \cite{Joyner, Miller} that supported this hypothesis. Grindrod and Lewis \cite{GL} studied the propagation of the excitation in a Purkinje fibre bundle using an eikonal equation obtained as an asymptotic  limit of a model of propagation. They used biological values for all parameters and computed the approximated speed of the front. In the context of this asymptotic limit, they proved that both an abrupt increase in the diameter of the bundles as well as an heterogeneous depression of cardiac tissues may prevent propagation of fronts in one direction preserving the propagation in the other direction. 
%However this work is done with an asymptotic model using an eikonal equation and
%it is of importance to understand in which cases a travelling front may or may not 
%propagate along an axis in unbounded domain.

Lastly equation \eqref{problem} is also relevant to describe spatially structured population dynamics. Bistable equations, of the type we study here, model populations that are 
%can also be used to represent the evolution of a species whose dynamics is 
subject to an {\em Allee effect}. One can think of the invasion by a population of fishes,  through a straight into wide ocean.
Invasions of species (plants, trees, animals) subject to Allee effect, going through an isthmus and then into a large area also lead to the kind of geometric effects we study here.

In \cite{BRRK}, the authors studied numerically the evolution of a population in such a complex domain as it faces climate change. 
In this framework, the non-linearity 
in equation \eqref{problem} is space dependent: $f= f(x, u)$ with, say $f(x, u) = - \gamma u$ when $|u| \geq A$ (where $\gamma$ and $A$ are positive constants). Furthermore, to model the effect of climate change one considers $f= f(x-ct, u)$ where $c$ is the exogeneously given climate change velocity. The domain involves a transition region with an opening angle $\alpha$ so that when $\alpha$ is small, the transition is gradual whereas when $\alpha$ is close to $\pi/2$ the transition is abrupt. 
It is shown numerically in \cite{BRRK} that there exists a critical angle $\alpha_0 \in (0, \pi/2)$ such that when  
$\alpha > \alpha_0$ there is extinction of the population, whereas if 
$\alpha < \alpha_0$, the species survives. This property is analogous to what we prove here. Using our methods here we can derive the results when $\alpha$ is either cloe to 0 or to $\pi/2$.  Nevertheless, the complete description of \cite{BRRK}Ê with a specific critical angle is still an open problem. 
%\todo{ + travaux sur population de poisson passant pr un d\'etroit.}
%% Fin lodifications 28/12/14  midi

%%%%%%%%%%%%%%%%%%%%%%%%%%%%%%%%%%%%%%%%%%%%%%%%%%%%%%%%%%%%%%
\section{Existence and uniqueness of the entire solution}\label{existence section}
%%%%%%%%%%%%%%%%%%%%%%%%%%%%%%%%%%%%%%%%%%%%%%%%%%%%%%%%%%%%%%
The proof of Theorem \ref{existenceThm} follows directly from the work of the first author with Fran\c{c}ois Hamel and Hiroshi Matano on waves passing an obstacle \cite{BHM}. We will give here the main points needed to adaptat the arguments but for a detailed proof we refer to \cite[section 2.3-2.4-3]{BHM}. The idea is to construct sub- and supersolutions of problem \eqref{problem} for negative time $t<T<0$ for some $T<0$ and to use these sub- and supersolutions to construct the entire solution. For the uniqueness result one uses the monotonicity in time and a comparison principle.

%%%%%%%%%%%%%%%%%%%%%%%%%%%%%%%%%%%%%%%%%%%%%%%%%%%%%%%%%%
\subsection{Construction of the entire solution}
The construction of the entire solution follows exactly the same steps as in \cite[section 2.3]{BHM}. In our setting, the super- and subsolutions are defined by
\be\label{supersol}
w^+(t,x)=\begin{cases} 2\phi(-ct-\xi(t)), &\textrm{if } x_1>0,\\
						\phi(x_1-ct-\xi(t))+\phi(-x_1-ct-\xi(t)), &\textrm{if } x_1\leq0,
			\end{cases}
\ee
and 
\be\label{subsol}
w^-(t,x)=\begin{cases} 0, &\textrm{if } x_1>0,\\
						\phi(x_1-ct+\xi(t))-\phi(-x_1-ct+\xi(t)), &\textrm{if } x_1\leq0,
			\end{cases}
\ee
where the function $\xi$ is such that $$\dot{\xi}(t)=Me^{\lambda(ct+\xi(t))},\hspace{0.2cm} \xi(-\infty)=0,$$
with $\ds{\lambda=\frac{1}{2}\left(c+\sqrt{c^2-4f'(0)}\right)}$ and $M$ a positive constant to be determined. 
Thus,
$$\xi(t)=\frac{1}{\lambda}\ln\Big(\frac{1}{1-Mc^{-1}e^{\lambda ct}}\Big) \hspace{0.5cm} \text{ for all } t\leq \frac{1}{\lambda c}\ln\Big(\frac{c}{M}\Big).$$
If we assume that $ct+\xi(t)\leq 0$ which is true as soon as $t\leq \frac{1}{\lambda c}\ln\Big(\frac{c}{c+M}\Big)$, it can be proved that there exist $T<0$ and $M>0$ large enough such that $w^+$ is a supersolution and $w^-$ is a subsolution of problem \eqref{problem} for all $t<T$ and $x\in\Omega$.
%We will need the following basics estimates
% \subsubsection*{Basic estimates}
% Let $(\phi,c)$ be the bistable travelling waves solution of problem \eqref{tw}.
% There exist $\alpha_0$, $\alpha_1$, $\beta_0$ and $\beta_1$ positive constants such that 
% \be\label{estphi}
% \begin{cases}
% \alpha_0e^{-\lambda z}\leq \phi(z)\leq\beta_0e^{-\lambda z}, &z>0,\\
% \alpha_1e^{\mu z}\leq 1-\phi(z)\leq\beta_1e^{\mu z}, & z\leq0.
% \end{cases}
% \ee
% And there exist $\gamma_0$, $\gamma_1$, $\delta_0$ and $\delta_1$ positive constants such that
% \be\label{estphider}
% \begin{cases}
% -\gamma_0e^{-\lambda z}\leq \phi'(z)\leq-\delta_0e^{-\lambda z}, &z>0,\\
% -\gamma_1e^{\mu z}\leq \phi'(z)\leq-\delta_1e^{\mu z}, & z\leq0,
% \end{cases}
% \ee
% with $\mu=\frac{1}{2}\Big(-c+\sqrt{c^2-4f'(1)}\Big)$.
% \vspace{0.2cm}\\
% Furthermore as $f\in C^{1,1}$ one has
% \be\label{flip}
% |f(u+v)-f(u)-f(v)|\leq Luv \hspace{0.5cm} \forall\hspace{0.1cm} 0\leq u,v\leq1,
% \ee
% where $L>0$ some constant.
% \bigskip\\
% Then we use these basic estimates to prove that 
% $$\partial_t w^+(t,x)-\Delta w^+(t,x)-f(w-+(t,x))\geq0,\quad\forall t<T,\:x\in\Omega,$$
% $$\partial_t w^-(t,x)-\Delta w^-(t,x)-f(w^-(t,x))\leq0,\quad\forall t<T,\:x\in\Omega,$$
% using different arguments for $x_1>0$, $ct+\xi(t)\leq x_1\leq0$, $x_1<ct+\xi(t)$ (see \cite[section 2.3]{BHM} for details).

Now the idea is to construct a sequence of solutions $u_n$ of the Cauchy problem defined for $-n\leq t<+\infty$, such that $u_n$ converges toward an entire solution as $n\to+\infty$. Following \cite[section 2.4]{BHM}, we define $u_n$ to be the solution of \eqref{problem} for all $t\geq-n$ with the initial condition
$$u_n(-n,x)=w^-(-n,x).$$
Since $w^-\leq w^+$ and using the comparison principle one proves that $w^-\leq u_n\leq w^+$ and 
$$u_n(t,x)\geq u_{n-1}(t,x) \text{ for all }t\in[-n+1,+\infty), x\in\Omega.$$
Using the monotonicity of the sequence and parabolic estimates as $n\to+\infty$, we see that $u_n$ converges to an entire solution $u$ of \eqref{problem} for all $t\in\R$, $x\in\Omega$, and 
$$w^-(t,x)\leq u(t,x)\leq w^+(t,x) \text{ for all } t\in(-\infty,T], x\in\Omega.$$
Since $\xi(-\infty)=0$, we have thus proved that there exists an entire solution to \eqref{problem} such that 
\begin{equation*}
|u(t,x)-\phi(x_1-ct)|\to0 \text{ as } t\to-\infty \text{ uniformly in } x\in\Omega.
\end{equation*}

%%%%%%%%%%%%%%%%%%%%%%%%%%%%%%%%%%%%%%%%%%%%%%%%%%%%%%%%%%
\subsection{Time monotonicity}

As in \cite{BHM}, we will make use of the property that $u_t>0$ for all $t\in\R$ and $x\in\Omega$. This can be proved  using the maximum principle and the fact that $w^-_t>0$. Hence
$$(u_n)_t(t,x)>0 \text{ for all } t\in(-n,+\infty), x\in\Omega.$$
Letting $n\to+\infty$ and using the strong maximum principle, one concludes that 
$$u_t(t,x)>0 \text{ for all } t\in\R, x\in\Omega.$$
Since $0<u<1$, using monotonicity and usual parabolic estimates, we obtain that $u(t,.)\to u_\infty$ in $C^2_{loc}(\Omega)$ and $u_\infty$ is a solution of 
\eqref{pbstat}.

In the following, we will need more informations on $u_t$ so
for $0<\eta\leq\frac{1}{2}$, we introduce
$$\Omega_\eta(t)=\{x\in\Omega:\hspace{0.1cm} \eta\leq u(t,x)\leq1-\eta\}.$$ \begin{Lemma}\label{uniqsol}
For all $\eta\in(0,\frac{1}{2}]$, there exists $\delta>0$ such that 
\be\label{utpos}
u_t(t,x)\geq \delta \hspace{0.3cm} \text{for all } t\in(-\infty,T_\eta] \text{ and } x\in\Omega_\eta(t).
\ee
\end{Lemma}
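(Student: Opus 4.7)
The plan is to reduce the lemma to a comparison between $u_t(t,x)$ and the traveling-wave time derivative $-c\phi'(x_1-ct)$, which is bounded below by a positive constant on the region where $\phi(x_1-ct)$ stays in a compact subinterval of $(0,1)$. First, I would record that $\phi\in C^2(\R)$ is strictly decreasing with $\phi(-\infty)=1$ and $\phi(+\infty)=0$ (standard for bistable fronts, from the phase-plane analysis of \eqref{tw}), so the preimage $[a_\eta,b_\eta]:=\phi^{-1}([\eta/2,1-\eta/2])$ is a compact interval on which $m_\eta:=\min_{[a_\eta,b_\eta]}|\phi'|>0$. Hence $-c\phi'(s)\geq cm_\eta$ for every $s\in[a_\eta,b_\eta]$.

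Next I would show that $u_t(t,x)+c\phi'(x_1-ct)\to 0$ uniformly in $\overline{\O}$ as $t\to-\infty$. Setting $v(t,x):=u(t,x)-\phi(x_1-ct)$, the already established uniform convergence $v\to 0$ will be upgraded to uniform convergence of $v_t$ via parabolic Schauder estimates. A direct computation using \eqref{tw} gives
$$v_t-\Delta v = f(u)-f(\phi(x_1-ct)) = g(t,x)\,v\quad \text{in } \O,$$
where $g$ is bounded by $\|f'\|_{L^\infty([0,1])}$, together with the Neumann datum $\pn v=-\nu_1(x)\,\phi'(x_1-ct)$ on $\pO$. By \eqref{condOmega3}, $\nu_1\equiv 0$ on $\pO\cap\{x_1<0\}$, so this datum is supported on $\pO\cap\{x_1\geq 0\}$; but there $x_1-ct\geq -ct\to+\infty$ uniformly as $t\to-\infty$, and the exponential decay of $\phi'$ and $\phi''$ at $+\infty$ (standard from the linearization of \eqref{tw} at $0$) forces the Neumann datum to converge to $0$ uniformly, together with its relevant time and spatial derivatives. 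Applying boundary parabolic Schauder estimates on translated windows $[t-1,t]$, permitted by the uniform $C^{2,\alpha}$ regularity of $\pO$, I then obtain $\|v_t(t,\cdot)\|_{L^\infty(\overline{\O})}\to 0$, i.e.\ $u_t+c\phi'(x_1-ct)\to 0$ uniformly as $t\to-\infty$.

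Finally, I would choose $T_\eta$ negative enough that, for every $t\leq T_\eta$,
$$\sup_{\overline{\O}}|v(t,\cdot)|<\eta/2\quad\text{and}\quad \sup_{\overline{\O}}\bigl|u_t+c\phi'(x_1-ct)\bigr|<cm_\eta/2.$$
For $x\in\O_\eta(t)$, the bounds $\eta\leq u(t,x)\leq 1-\eta$ and $|v|<\eta/2$ force $\phi(x_1-ct)\in[\eta/2,1-\eta/2]$, hence $x_1-ct\in[a_\eta,b_\eta]$ and $-c\phi'(x_1-ct)\geq cm_\eta$. Therefore $u_t(t,x)\geq cm_\eta-cm_\eta/2=cm_\eta/2=:\delta$, as required. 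The main technical point will be the parabolic regularity step up to the boundary with time-varying Neumann data: one must verify that not merely the $L^\infty$ norm but the full parabolic Hölder norm of $-\nu_1\phi'(x_1-ct)$ on $\pO$ tends to $0$ uniformly as $t\to-\infty$. This follows from the exponential decay of $\phi,\phi',\phi''$ at $+\infty$ combined with the uniform $C^{1,\alpha}$ regularity of $\nu$ inherited from \eqref{condOmega1}; every other ingredient is either the known tail asymptotics of $u$ or the monotone shape of the bistable profile.
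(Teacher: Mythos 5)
Your proof is correct, but it follows a genuinely different route from the paper's. The paper argues by contradiction and compactness: it first localises $\Omega_\eta(t)\subset\{|x_1-ct|<M_\eta\}\subset\R^-\times\omega$ for $t\le T_\eta$, then supposes $u_t(t_k,x_k)\to0$ along some sequence, extracts a limit (translating in $t$ and $x_1$ when $(t_k)$ is unbounded), and derives a contradiction from the strong maximum principle for $u_t$ in the interior and from the parabolic Hopf lemma ($\partial_\nu u_t=(\partial_\nu u)_t<0$) when the limit point lies on $\partial\Omega$. You instead work directly with $v=u-\phi(x_1-ct)$, upgrade the uniform convergence \pref{condinf} to uniform convergence of $v_t$ by parabolic regularity and interpolation, and pin $\phi(x_1-ct)$ into $[\eta/2,1-\eta/2]$ on $\Omega_\eta(t)$ so that $u_t\ge -c\phi'(x_1-ct)-cm_\eta/2\ge cm_\eta/2$. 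Your approach buys an explicit constant $\delta=cm_\eta/2$ expressed through the wave profile, dispenses entirely with the containment of $\Omega_\eta(t)$ in the straight part of the cylinder and with the limit-domain extraction for unbounded sequences; the price is the regularity bookkeeping you correctly flag (the parabolic H\"older norm of the Neumann datum $-\nu_1\phi'(x_1-ct)$ must tend to $0$, which does follow from the exponential decay of $\phi'$, $\phi''$ at $+\infty$, the fact that $\nu_1\equiv0$ on $\{x_1<0\}$, and the uniform $C^{2,\alpha}$ regularity of $\partial\Omega$). The paper's argument is softer and shorter to state but leans on the previously established strict positivity $u_t>0$ together with Hopf's lemma. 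Both are valid; note only that you are redefining $T_\eta$ (a proof-internal constant), which is harmless since the later uses of the lemma only require the property \pref{utpos} for some such $T_\eta$.
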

\begin{proof}[Proof of Lemma \ref{uniqsol}]
The proof of the lemma is the same as in \cite{BHM} section 3 except that the domain is changed. We will therefore give only the main ideas. 

First using \eqref{condinf}, there exist $T_\eta\in\R$ and  $M_\eta>0$ such that,
\be\label{omegaeta}
\Omega_\eta(t)\subset \{x\in\Omega, \; |x_1-ct|<M_\eta\}\subset \R^-\times \omega \textrm{ for all } -\infty<t\leq T_\eta.
\ee
Now argue by contradiction and assume that there exist $t_k\in(-\infty,T_\eta]$ and $x_k\in\Omega_\eta(t)$ such that $u_t(t_k,x_k)\to0$ as $k\to+\infty$. There are two possibilities:
\begin{itemize}
\item If $(t_k)_k$ is bounded, then $x_k$ is also bounded and up to extraction of a subsequence, we can assume that $t_k\to t_*$ and $x_k\to x_*$ so $u_t(t_*,x_*)=0$. But $u_t >0$ on $\Omega$ so $x_*\in \partial \Omega$ and using the parabolic Hopf lemma, $\partial_\nu u_t=(\partial_\nu u)_t<0$ and we obtain a contradiction with the boundary condition.
\item If $(t_k)_k$ is unbounded, then $x_k$ is also unbounded which means here that $x_{1,k}$ tends to $-\infty$. Defining $u_k(t,x)=u(t+t_k,x_1+x_{1,k},x')$ and using parabolic estimates, we get that $u_k\to u^*$ as $k\to+\infty$ in $C^{1,2}_{\text{loc}}(\R\times \R \times \omega)$. Then, we conclude as in the previous case. 
\end{itemize}
\end{proof}
%%%%%%%%%%%%%%%%%%%%%%%%%%%%%%%%%%%%%%%%%%%%%%%%%%%%%%%%%%
\subsection{General comparison principle and uniqueness of the solution}

We now want to prove that the entire solution satisfying condition \eqref{condinf} is unique. In a sense, this result is an extension of the parabolic comparison principle to the case when the initial condition is given at $t=-\infty$, which we now state in the following more general form.

\begin{Lemma}[General comparison principle] \label{princomp}
Assume $u$ and $v$ are super- and subsolutions of \eqref{problem} with limiting condition \eqref{condinf}, that is:
\be\label{supersubproblem}
\begin{cases}
\partial_t u-\Delta u \geq f(u) \quad  \text{in } \R\times\Omega,\\
\partial_\nu u\geq0  \quad\text{on }\R\times\partial\Omega,\\
\displaystyle \lim_{t\to -\infty} \inf_{x\in \Omega} u(t,x)-\phi(x-ct) \geq 0 .
\end{cases} %\ee
\! \text{and }
%\be\label{subproblem}
\begin{cases}
\partial_t v-\Delta v \leq f(v) \quad \text{in } \R\times\Omega,\\
\partial_\nu v\leq0 \quad \text{on }\R\times\partial\Omega,\\
\displaystyle \lim_{t\to -\infty} \sup_{x\in \Omega} v(t,x)-\phi(x-ct) \leq 0 
\end{cases}
\ee
Then $v\leq u$ for all $t\in \R$ and $x\in \overline{\Omega}$.
\end{Lemma}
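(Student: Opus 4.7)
The approach is a Fife--McLeod perturbation argument in the spirit of \cite[Section 3]{BHM}: I will construct a family of strict supersolutions $u^{\epsilon_0}$ of \eqref{problem} (parameterized by $\epsilon_0>0$) that dominate $v$ on an initial slab in time and converge to $u$ as $\epsilon_0\to 0$. Two ingredients are essential. First, the linearization $f'(0),f'(1)<0$ produces dissipation for small positive perturbations of $u$ in the ``stable'' regions where $u$ is close to $0$ or $1$. Second, Lemma~\ref{uniqsol} provides a quantitative time-monotonicity $u_t\ge\delta_0$ in the ``transition zone'' $\Omega_\rho(t)$, which yields extra slack via a small forward shift in time.

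By \eqref{supersubproblem}, for every $\eta>0$ there exists $T_\eta<0$ such that $v-u\le\eta$ on $(-\infty,T_\eta]\times\overline\Omega$. Choose $\rho,\kappa>0$ with $f(s+\alpha)-f(s)\le-\kappa\alpha$ on $([-\rho,\rho]\cup[1-\rho,1+\rho])\times[0,\rho]$, apply Lemma~\ref{uniqsol} with $\eta=\rho$ to obtain $\delta_0>0$ and $T_\rho<0$, let $L$ be the Lipschitz constant of $f$ on $[-\rho,1+\rho]$, and set $\mu:=\kappa/2$, $\gamma:=L/\delta_0$. For $\epsilon_0\in(0,\rho]$, define on $(-\infty,T_\rho]\times\overline\Omega$
$$u^{\epsilon_0}(t,x):=u(t+\tau(t),x)+\epsilon(t),\quad \epsilon(t):=\epsilon_0\, e^{\mu(t-T_\rho)},\quad \tau(t):=\tfrac{\gamma}{\mu}\epsilon(t),$$
so that $\tau(t),\epsilon(t)\downarrow 0$ as $t\to-\infty$. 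Using that $u$ is a supersolution, one checks
$$\partial_t u^{\epsilon_0}-\Delta u^{\epsilon_0}-f(u^{\epsilon_0})\ge\tau'(t)\,u_t(t+\tau(t),x)+\epsilon'(t)-[f(u+\epsilon)-f(u)],$$
and splits according to the value of $u(t+\tau(t),x)$: in the stable zone $[0,\rho]\cup[1-\rho,1]$ the bracket is $\le-\kappa\epsilon$, giving a remainder $\ge\tfrac{3\kappa}{2}\epsilon>0$; in the transition zone $(\rho,1-\rho)$ Lemma~\ref{uniqsol} yields $u_t\ge\delta_0$, so $\tau'\delta_0=L\epsilon$ compensates the Lipschitz loss and the remainder is $\ge\mu\epsilon>0$. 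The Neumann inequality $\partial_\nu u^{\epsilon_0}\ge 0$ is inherited from $u$. Picking a reference time $T\le T_\rho$ with $T\le T_\eta$ for $\eta\le\epsilon(T)/2$, the monotonicity $u_t>0$ gives $u^{\epsilon_0}(T,\cdot)\ge u(T,\cdot)+\epsilon(T)\ge v(T,\cdot)+\eta$; the standard parabolic comparison principle on $[T,T_\rho]\times\Omega$ then propagates $v\le u^{\epsilon_0}$. A further comparison between $u$ (supersolution) and $v$ (subsolution) on $[T_\rho,+\infty)\times\Omega$, started from the inequality $v(T_\rho,\cdot)\le u^{\epsilon_0}(T_\rho,\cdot)$, extends $v\le u^{\epsilon_0}$ globally, and the limit $\epsilon_0\to 0^+$ (which sends $\tau(t),\epsilon(t)\to 0$ pointwise) yields $v\le u$ on $\R\times\overline\Omega$.

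The principal technical difficulty is the calibration of the ODE for $(\tau,\epsilon)$: the coefficients $\gamma,\mu$ must be chosen so that $u^{\epsilon_0}$ is a supersolution \emph{uniformly} in $x$, handling the stable zones (via linearization) and the transition zone (via the Lemma~\ref{uniqsol} slack) simultaneously, while still vanishing at $t=-\infty$ so as to respect the asymptotic front condition. A subsidiary point is the compatibility of the prescribed decay rate $\mu$ with the (a priori unspecified) rate at which $v-u\to 0$ at $-\infty$; this is resolved by fixing $\mu$ small first, then selecting $T$, $\eta$, $\epsilon_0$ in that order so that $\epsilon(T)\ge 2\eta$ and $T\le T_\eta$ simultaneously hold.
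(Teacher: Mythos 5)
Your overall strategy is the right one and matches the paper's: perturb the supersolution by a small additive term plus a time shift, absorb the perturbation where $u$ is near $0$ or $1$ using $f'(0),f'(1)<0$, absorb it in the transition zone using the lower bound $u_t\geq\delta$ of Lemma \ref{uniqsol}, run the classical parabolic comparison forward in time, and pass to the limit. However, there is a genuine gap in the initialization of the comparison, and it comes from the orientation of your exponential. You take $\epsilon(t)=\epsilon_0\, e^{\mu(t-T_\rho)}$, which vanishes as $t\to-\infty$; to start the comparison at time $T$ you need $\sup_x\big(v(T,\cdot)-u(T,\cdot)\big)\leq\epsilon(T)$. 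The hypothesis \eqref{supersubproblem} only gives $g(t):=\sup_x(v-u)^+(t,\cdot)\to0$ as $t\to-\infty$ with \emph{no rate}, whereas $\epsilon(T)$ decays exponentially as $T\to-\infty$. If, say, $g(t)=e^{-\sqrt{-t}}$, then the requirement $e^{-\sqrt{-T}}\leq\epsilon_0\, e^{\mu(T-T_\rho)}$ forces $\epsilon_0\geq e^{\mu T_\rho-1/(4\mu)}>0$ uniformly over all admissible $T$, so no starting time exists once $\epsilon_0$ is small --- precisely the regime you need for the final limit $\epsilon_0\to0$. Your proposed ordering ``fix $\mu$, then select $T$, $\eta$, $\epsilon_0$'' does not break the circularity: choosing $T$ before $\eta$ forces $\eta\geq\sup_{t\leq T}g(t)$ and hence $\epsilon_0\geq 2e^{-\mu(T-T_\rho)}\sup_{t\leq T}g(t)$, a positive lower bound incompatible with $\epsilon_0\to0$ at fixed $T$; and letting $T\to-\infty$ reinstates the rate-matching problem.

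The paper's proof avoids this by reversing the time direction of the perturbation. For a fixed constant $\epsilon$ it starts the comparison at an arbitrary $t_0\leq T^*=\min(T_\eta-\sigma\epsilon,t_\epsilon)$ with $z(t,x)=u(t+t_0+\sigma\epsilon(1-e^{-\gamma t}),x)+\epsilon e^{-\gamma t}$: at the initial time the additive perturbation equals the full constant $\epsilon$, which dominates $v-u\leq\epsilon$ for every $t_0\leq t_\epsilon$ with no rate assumption, and the perturbation then \emph{decays} forward in time while the shift accumulates to at most $\sigma\epsilon$. Letting $t_0\to-\infty$ first kills the additive term and yields $v(t,x)\leq u(t+\sigma\epsilon,x)$ on $(-\infty,T^*]$; the classical comparison principle extends this to all of $\R$, and only then does one send $\epsilon\to0$. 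Your PDE computation (the calibration $\gamma=L/\delta_0$, $\mu=\kappa/2$ giving the remainders $\tfrac{3\kappa}{2}\epsilon$ in the stable zones and $\mu\epsilon$ in the transition zone) is correct and would survive the sign flip essentially unchanged; the fix is to make the additive perturbation constant at the starting time and decaying forward, and to take the iterated limit in the order $t_0\to-\infty$ first, then $\epsilon\to0$.
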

This comparison principle indeed contains the uniqueness of the entire solution satisfying \eqref{condinf}.
\begin{proof}[Proof of Lemma \ref{princomp}]
In the following proof, we will note $\L w:=w_t-\Delta w-f(w)$.

Choose $\eta>0$ small enough so that 
\begin{equation}
\label{derf}
f'(s)\leq -\gamma \text{ for } s \in [-2\eta, 2\eta]\cup [1-2\eta,1+2\eta]
\end{equation}
for some $\gamma>0$. For any $\epsilon\in (0,\eta)$ we can find $t_\epsilon \in \R$ such that 
$$
v(t,x)-u(t,x)\leq \epsilon \text{ for } -\infty<t\leq t_\epsilon \text{ and } x\in \Omega.
$$
We let $T^*=\min(T_\eta-\sigma\epsilon,t_\epsilon)$.
For any $t_0\in (-\infty,T^*]$, we define
$$z(t,x)=u(t+t_0+\sigma\epsilon(1-e^{-\gamma t}),x)+\epsilon e^{-\gamma t}$$
where the constant $\sigma>0$ will be specified later. Then 
\begin{equation}
\label{CIsub}
v(t_0,x)\leq z(0,x) \text{ for } x\in \Omega.
\end{equation}
Let us prove that $z$ is a supersolution in the time range $t\in [0,T^* -t_0]$ if $\sigma$ is sufficiently large (independently of $\epsilon$).
\begin{eqnarray*}
\L z&\geq&  \sigma  \gamma\epsilon e^{-\gamma t}u_t-\gamma\epsilon e^{-\gamma t}+f(u)-f(u+\epsilon e^{-\gamma t}) \\
&=& \epsilon e^{-\gamma t}(\sigma  \gamma u_t-\gamma-f'(u+\theta\epsilon e^{-\gamma t}) ),
\end{eqnarray*}
for some $\theta=\theta(t,x)$ with $0 < \theta <1$.

If $x\in \Omega_\eta [t+t_0+\sigma\epsilon(1-e^{-\gamma t})]$, then using the preceding lemma \ref{uniqsol}, we see that
$$
\L z\geq \epsilon e^{-\gamma t}(\sigma  \gamma \delta -\gamma-\max_{0\leq s\leq 1}f'(s)).
$$
Therefore, $\L z>0$ if $\sigma$ is chosen sufficiently large (independently of $\epsilon>0$). 
If $x\not\in \Omega_\eta [t+t_0+\sigma\epsilon(1-e^{-\gamma t})]$, we have 
$u(t+t_0+\sigma\epsilon(1-e^{-\gamma t}), x ) +\theta \epsilon e^{-\gamma t}\in [0,2\eta]\cup[1-\eta,1+\eta]$. Consequently, $f'(u+\theta \epsilon e^{-\gamma t})\leq \gamma$ and 
$$
\L z\geq \epsilon e^{-\gamma t}(-\gamma +\gamma)=0.
$$
Applying the classical comparison principle for $t\in [0,T^*-t_0]$ and $x\in \Omega$, we get
$$
v(t_0+t,x)\leq z(t,x) \text{ for } t\in [0,T^*-t_0], x\in \Omega.
$$
Rewriting $t_0+t$ as $t$ for simplicity of notation, we see that
$$
v(t,x)\leq u(t+\sigma\epsilon(1-e^{-\gamma (t-t_0)}),x)+\epsilon e^{-\gamma (t-t_0)} \text{ for } t\in [t_0,T^*], x\in \Omega
$$
where $t_0$ is chosen arbitrarily in $(-\infty, T^*]$. Letting $t_0 \rightarrow -\infty$, we derive
$$
v(t,x)\leq u(t+\sigma \epsilon,x)
$$
for all $t\in (-\infty,T^*], x\in \Omega$ and we can once again apply a classical comparison principle and conclude that this inequality holds for all $t\in \R$. Finally, letting $\epsilon \to 0$, we have proved $v\leq u$. 
\end{proof}

\subsection{More general domains}\label{bentsec}
The proofs we have sketched here actually work for more general domains.  
For instance we can consider {\em bent} cylindrical type domains in which we have existence and uniqueness of an entire solution of problem \eqref{problem} satisfying \eqref{condinf}.  By slightly modifying the sub- and supersolutions $w^-$ and $w^+$ for $x_1<0$, we can extend these results to U-shaped cylindrical domains. Of course in these type of domains, condition \eqref{condinf} is satisfied for only one of the end of the cylinder near $x_1\to-\infty$. We also have the same existence and uniqueness proof for domains that only have one infinite ``end'' near $x_1\to -\infty$ and the remaining part is bounded. These different types of domains are illustrated in Figure \ref{bend}. %%%
%  Fin relecture 29/12/14  ˆ  20h30
%%%
\begin{figure}[!h]
\begin{center}
\includegraphics[scale=0.63]{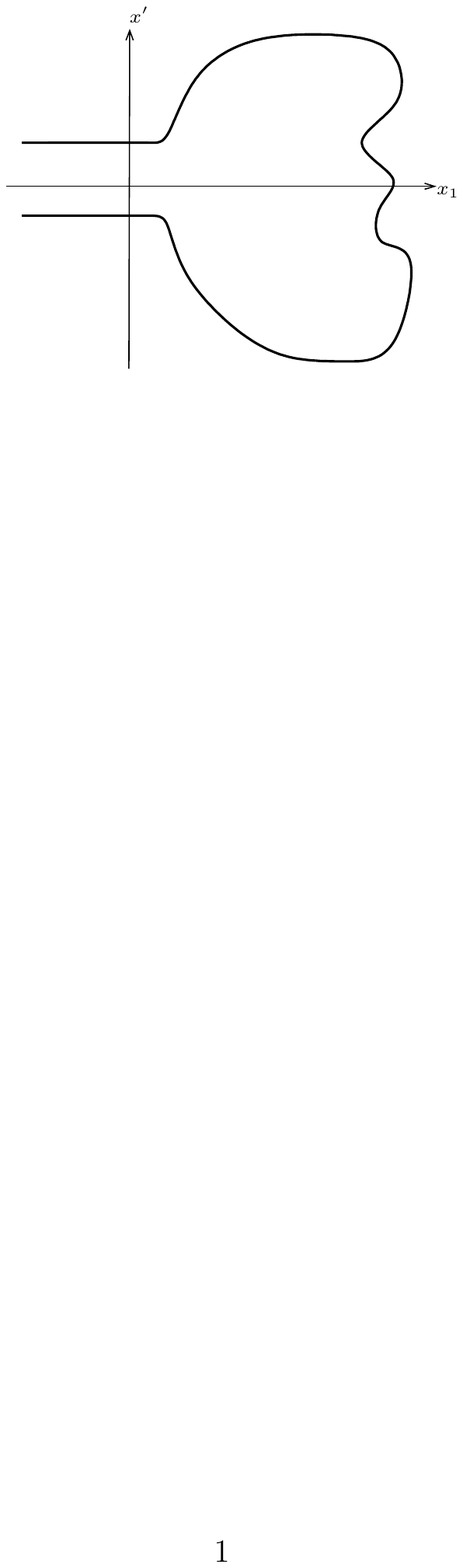}
\includegraphics[scale=0.62]{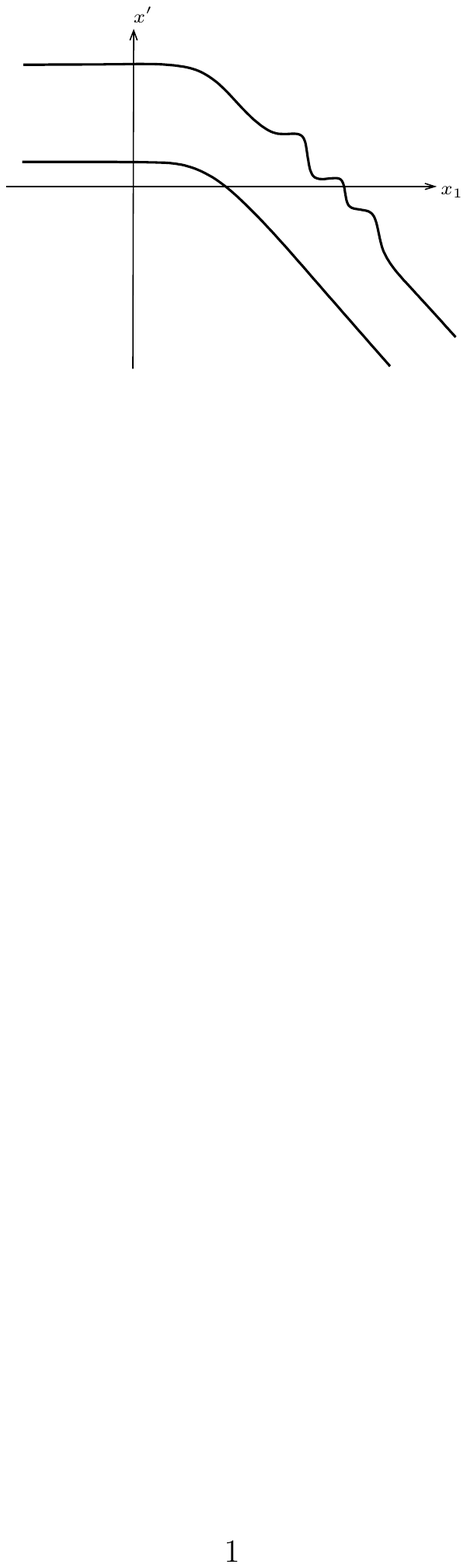}
\includegraphics[scale=0.63]{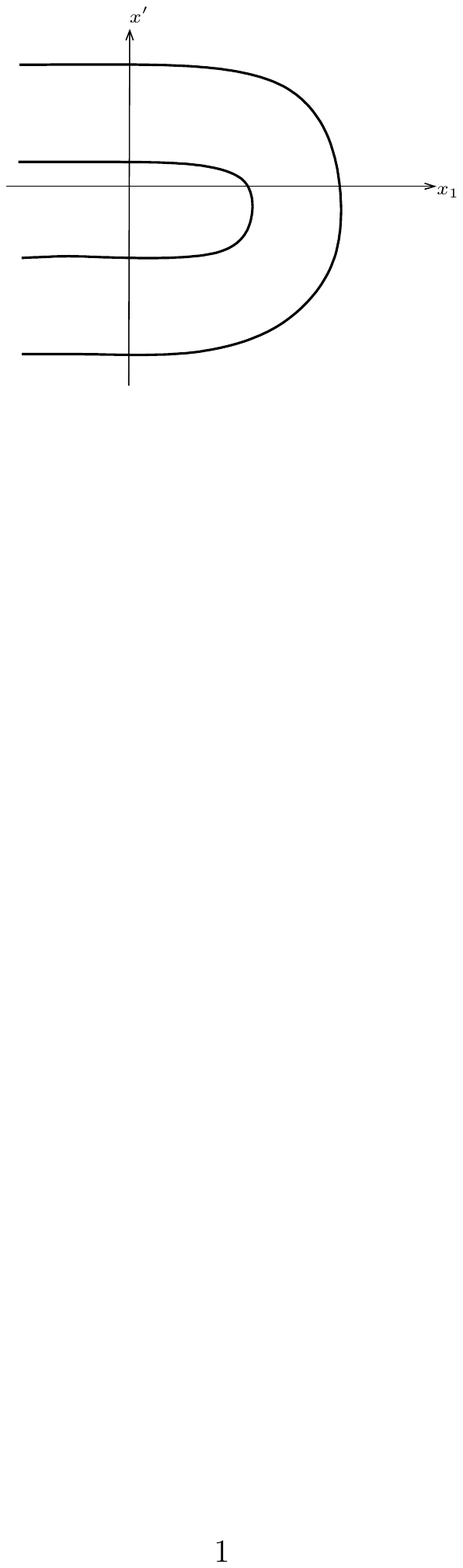}
\caption{Examples of 2D domains for which existence and uniqueness of an entire solution of Problem \eqref{problem}, satisfying \eqref{condinf}, hold. {\em Left}: domain with one end ;  {\em center}: bent cylindrical type domain ; {\em right}: U-shaped cylindrical domain.}\label{bend}
\end{center}
\end{figure}

%%%
%  Fin relecture 02/01/15  ˆ  18h30
%%%

%%%%%%%%%%%%%%%%%%%%%%%%%%%%%%%%%%%%%%%%%%%%%%%%%%%
%%%%%%%%%%%%%%%%%%%%%%%%%%%%%%%%%%%%%%%%%%%%%%%%%%%

% %%%%%%%%%%%%%%%%%%%%%%%%%%%%%%%%%%%%%%%%%%%%%%%%%%%%%%%%%%%%%%
% \section{Propagation and asymptotic speed}\label{speedsection}
% %%%%%%%%%%%%%%%%%%%%%%%%%%%%%%%%%%%%%%%%%%%%%%%%%%%%%%%%%%%%%%
% This section is devoted to the proofs of theorems \ref{decreaseThm} and \ref{perturbationThm} which present which present cases where the geometry of the domain does not affect really the propagation of the travelling front. In particular, the spreading speed is conserved.  

%%%%%%%%%%%%%%%%%%%%%%%%%%%%%%%%%%%%%%%%%%%%%%%%%%%%%%%
\section{Propagation in domains with decreasing cross section} \label{decreasesection}
In this section we consider propagation in the direction of a narrowing domain. We formulate this condition by requiring the first component of the outward normal to be non-negative, see figure \ref{decrease} for examples in 2D. Here we prove Theorem \ref{decreaseThm}. 
\begin{figure}[!h]
\begin{center}
\includegraphics[scale=0.5]{OmegaRetrbis}
\includegraphics[scale=0.4]{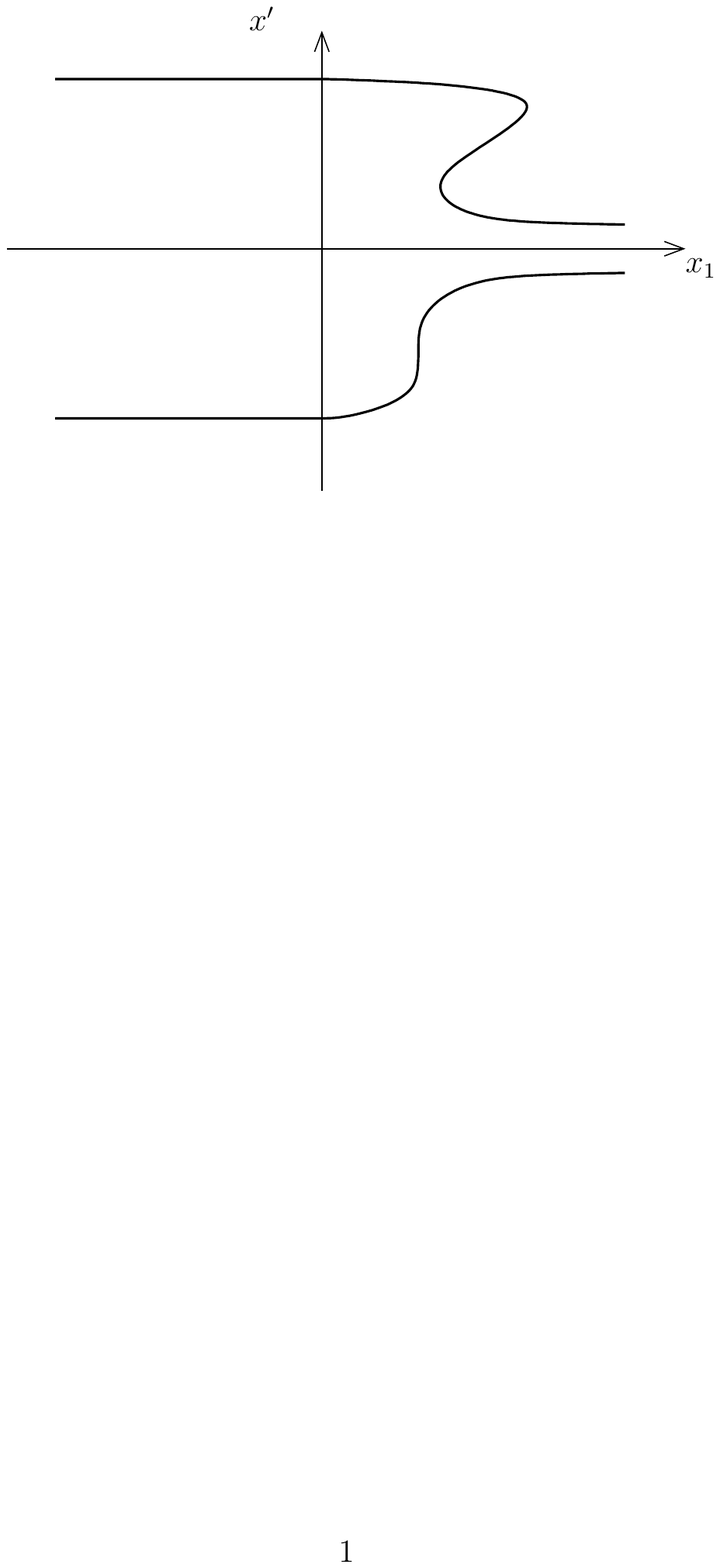}
\includegraphics[scale=0.4]{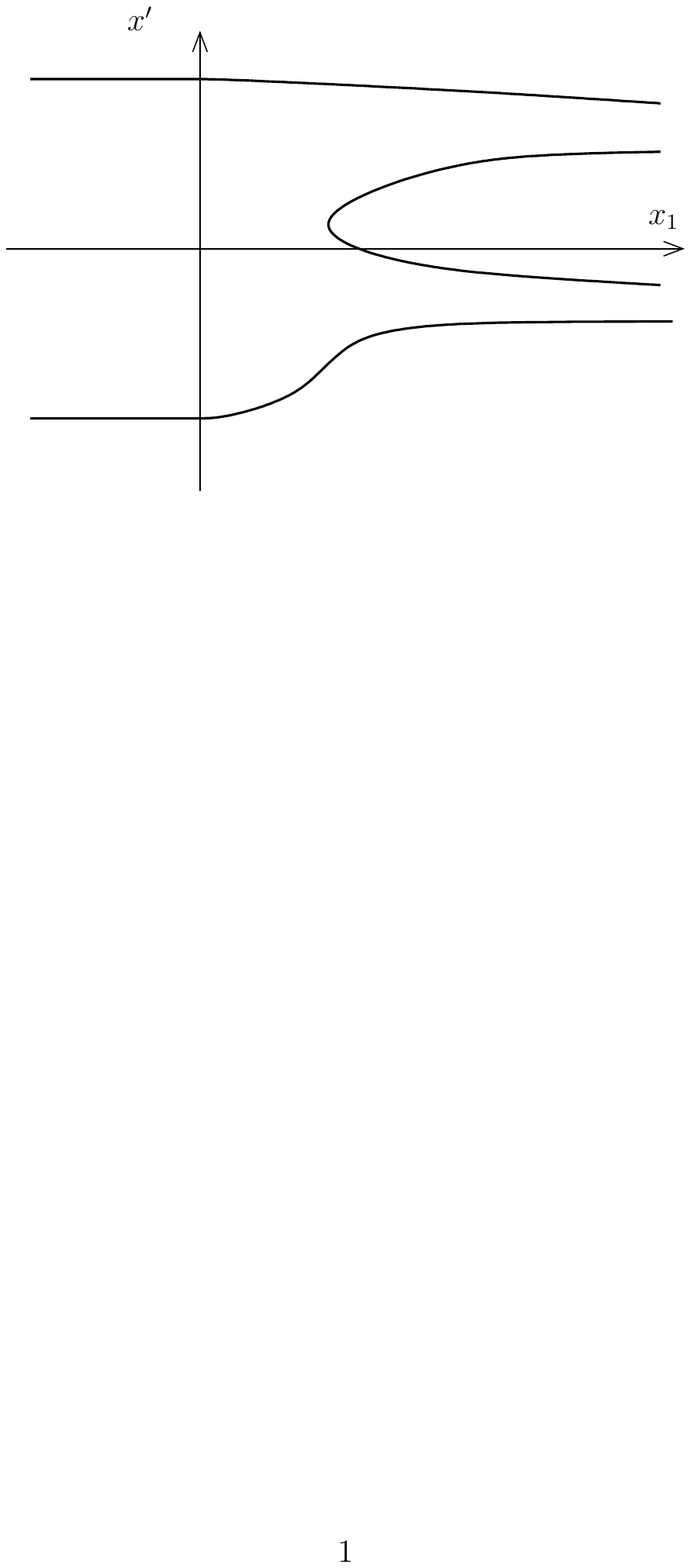}
\caption{Examples of 2D domains concerned by theorem \ref{decreaseThm}.}\label{decrease}
\end{center}
\end{figure}

We start by proving that the unique solution $u$ of (\ref{problem}) such that \eqref{condinf} is satisfied cannot be blocked and propagates to 1 at large times. The proof is based on the following lemma.
\begin{Lemma}\label{philequ}
Under the same assumption as in Theorem \ref{decreaseThm}, in particular that the domain $\Omega$ satisfies $\nu_1(x)\geq0$ for all $x\in\partial\Omega$, let $(\phi,c)$ be the travelling wave solution defined by \eqref{tw} and $u$ be the unique solution of \eqref{problem} such that \eqref{condinf} is satisfied. Then for all $t\in\R$ and $x\in\Omega$
$$\phi(x_1-ct)\leq u(t,x).$$
\end{Lemma}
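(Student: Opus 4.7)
The plan is to apply the general comparison principle (Lemma \ref{princomp}) with $u$ itself playing the role of the supersolution and $v(t,x) := \phi(x_1-ct)$ playing the role of the subsolution. I would first check the three conditions on $v$ required by \eqref{supersubproblem}, and then conclude $v \leq u$ directly.

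For the interior inequality, $v(t,x) = \phi(x_1-ct)$ depends only on $x_1$, so $\Delta v = \phi''(x_1-ct)$ and $\partial_t v = -c\phi'(x_1-ct)$. Using the travelling wave equation $\phi''+c\phi'+f(\phi)=0$, I get $\partial_t v - \Delta v - f(v) = 0$, so $v$ is in fact a classical solution of the PDE (hence both sub- and supersolution in the interior). For the limit at $t\to-\infty$, the defining property \eqref{condinf} gives $v(t,x)-u(t,x)\to0$ uniformly in $\overline{\Omega}$, so $\limsup_{t\to-\infty}\sup_{x\in\Omega}(v(t,x)-u(t,x)) \leq 0$, which is the required initial inequality.

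The only genuinely geometric step is the boundary inequality $\partial_\nu v \leq 0$ on $\partial\Omega$. Since $v$ depends only on $x_1$,
\begin{equation*}
\partial_\nu v(t,x) = \phi'(x_1-ct)\,\nu_1(x).
\end{equation*}
The standard bistable theory applied to \eqref{tw} together with the normalization $\phi(-\infty)=1$, $\phi(+\infty)=0$ guarantees that $\phi$ is strictly decreasing on $\R$, so $\phi'<0$. Combined with the standing hypothesis $\nu_1(x)\geq 0$ on $\partial\Omega$, this yields $\partial_\nu v(t,x) \leq 0$ everywhere on $\R\times\partial\Omega$, exactly as required. This is the place where the ``decreasing cross section'' assumption is used.

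With all three conditions verified, Lemma \ref{princomp} applied to the pair $(u,v)$ immediately gives $\phi(x_1-ct) = v(t,x) \leq u(t,x)$ for every $(t,x) \in \R\times\overline{\Omega}$, which is the claim. The only point one must not gloss over is the monotonicity $\phi'<0$, since the conclusion relies on the two sign conditions $\phi'<0$ and $\nu_1\geq 0$ lining up; everything else is a direct verification that the pieces plug into the general comparison principle already established.
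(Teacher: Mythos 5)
Your proposal is correct and follows essentially the same route as the paper: take $v(t,x)=\phi(x_1-ct)$ as an exact solution of the PDE, observe that $\partial_\nu v=\phi'(x_1-ct)\,\nu_1(x)\leq 0$ because $\phi'<0$ and $\nu_1\geq 0$, verify the limiting condition at $t\to-\infty$ (trivially, since $v$ coincides with $\phi(x_1-ct)$), and invoke the general comparison principle of Lemma \ref{princomp}. No gaps.
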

\begin{proof}[Proof of Lemma \eqref{philequ}] 
Define $v(t,x)=\phi(x_1-ct)$ for all $(t,x)\in\R\times\Omega$, $v$ satisfies the following problem:
\be\label{subsolv}
\begin{cases}
\partial_t v-\Delta v=f(v) & \text{in } \R\times\Omega,\\
\partial_\nu v\leq0 &\text{on }\R\times\partial\Omega,\\
v(t,x)-\phi(x_1-ct)\to0 & \textrm{as } t\to-\infty \textrm{ uniformly in } x\in\overline{\Omega}.
\end{cases}
\ee
Indeed, for all $t\in\R$, $x\in\partial\Omega$,
$$\partial_\nu v(t,x)=\phi'(x_1-ct)\nu_1(x).$$
As assumed in Theorem \ref{decreaseThm}, $\nu_1(x)\geq 0$ for all $x\in\partial\Omega$ and since $\phi$ is decreasing, we see that $\partial_\nu v\leq0$ in $\R\times\partial\Omega$. 
Applying the general comparison principle of Lemma \ref{princomp}, we get $v\leq u$.
\end{proof}
Next, we  make precise the behaviour of the solution $u$ at large time, i.e when $t\to+\infty$. Since $u$ is bounded between 0 and 1 and $\phi(x_1-ct)\to 1$ when $t\to +\infty$ uniformly locally for $x\in \Omega$, we have $u(t,x)\to 1$ uniformly locally for $x\in \Omega$. 

To complete the proof of Theorem \ref{decreaseThm}, we now furthermore assume that $\Omega\cap\{x\in \R^N, \; x_1>l\}=(l,+\infty)\times \omega_r$ for a given $l>0$, $\omega_r\subset\R^{N-1}$. We will prove that the 1D speed $c$ is the asymptotic spreading speed i.e. 
\begin{align}
&\text{for all } \hat{c}>c \quad \lim_{t\to+\infty} \sup_{x_1>\hat{c}t} u(t,x)=0,\label{limitec0}\\
&\text{for all } \hat{c}<c \quad \lim_{t\to+\infty} \inf_{x_1<\hat{c}t} u(t,x)=1.\label{limitec1}
\end{align}
The  lower bound for the speed of propagation in \eqref{limitec1} immediately results from $u(t,x)\geq \phi(x_1-ct)$ for all $(t,x)\in\R\times\Omega$.

We now turn to the proof of \eqref{limitec0}, which gives an upper bound for the speed of propagation. Let us fix $\hat{c}>c$. For all $\eta\in (0, \frac \theta 2)$, introduce $f_\eta$ (see Figure \ref{fetafig}) a smooth function in $[\eta,1+\eta]$ such that ${f(\eta)=f(\theta)=f(1+\eta)=0}$, $f<0$ on $(\eta,\theta)$, $f>0$ on $(\theta,1+\eta)$ and
\be\label{feta}
\begin{cases}
f_\eta\equiv f &\text{in } [2\eta,1-\eta],\\
f_\eta>f &\text{in } [\eta,2\eta)\cup(1,1+\eta].
\end{cases}
\ee
%\end{column}
%\begin{columns}
\begin{figure}[!h]
\begin{center}
\includegraphics[scale=0.7]{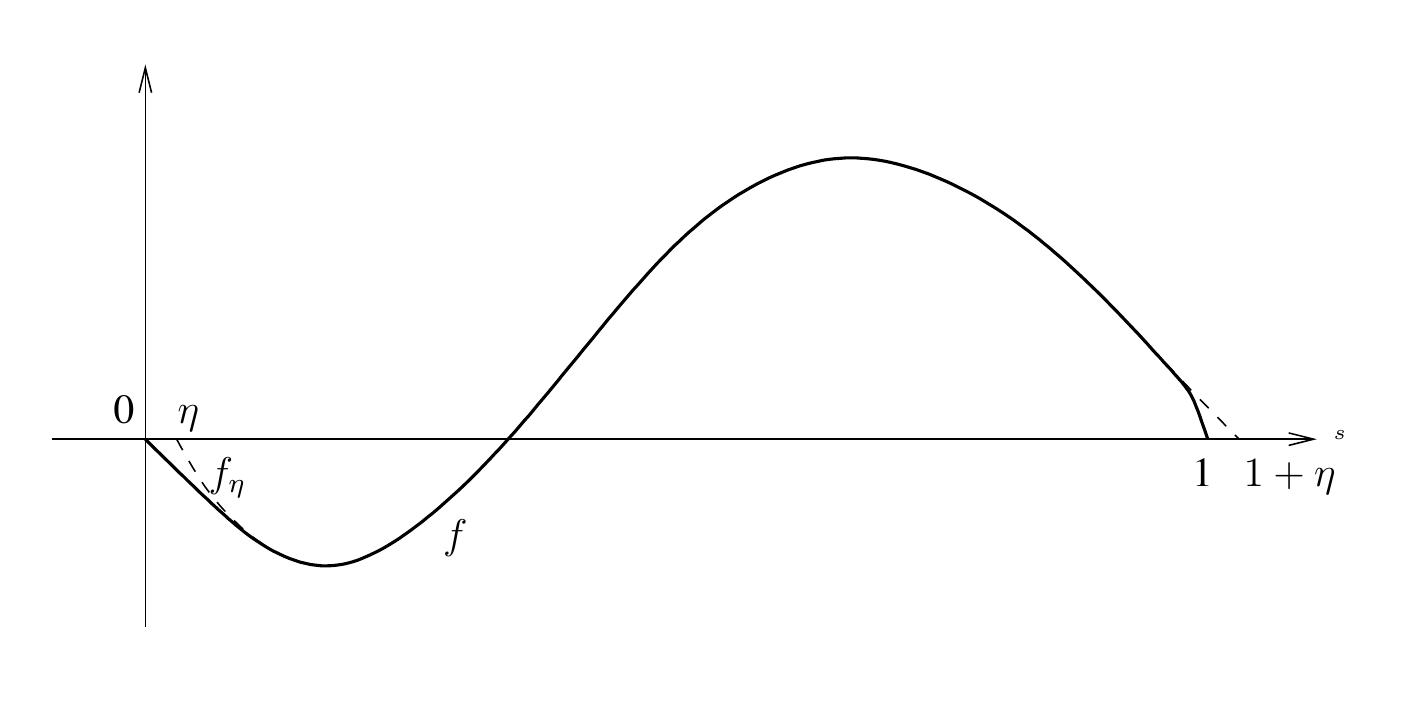}
\caption{Example of function $f_\eta$ that satisfies \eqref{feta}.}\label{fetafig}
\end{center}
\end{figure}
%\end{column}
%\end{multicols}
Consider the 1-D travelling front solution $\phi_\eta$ of speed $c_\eta$ associated with $f_\eta$, that is $(\phi_\eta,c_\eta)$ satisfies the equation
\be
\begin{cases}
\phi_\eta''(z)+c_\eta\phi_\eta'(z)+f_\eta(\phi_\eta)=0 &\text{for all } z\in\R,\\
\phi_\eta(-\infty)=1+\eta, \hspace{0.2cm} \phi_\eta(+\infty)=\eta.
\end{cases}
\ee
It is well know that $c_\eta\to c$ as $\eta\to0$. 
We denote $\eta_0>0$ the threshold such that $c_\eta<\hat{c}$ for all $\eta\in (0,\eta_0)$. Let $\eta\in (0,\eta_0)$ be fixed.
Since $|u(t,x)-\phi(x_1-ct)|\to 0$ as $t\to-\infty$ and $\lim_{+\infty} \phi=0$,  there exist $T<0$ and $L>0$ such that for all $t<T$ and $x_1>L$, $u(t,x)\leq \eta$.

% So $u$ satisfies the following problem
% \be\label{paraboliccylinder}
% \begin{cases}
% u_t(t,x)-\Delta u(t,x)=f(u(t,x)), &\text{for } t\in[T,+\infty[, x\in [l,+\infty[\times \omega_r,\\
% \partial_\nu u(t,x) =0, &\text{for } t\in[T,+\infty[, x\in [l,+\infty[\times \partial \omega_r,\\
% \end{cases}
% \ee
% where $0<l<L$ such that $\Omega\cap\{x,x_1>l\}=[l;+\infty)\times \omega_r$ as assumed in the Theorem \ref{decreaseThm}, with $\omega_r$ a compact set of $R^{N-1}$, diam($\omega_r$)=$r$ and with
% \be\label{Dirichlet}
% u(t,l,x')\leq 1,\text{ for } t\in[T,+\infty[.
% \ee
% Define $g:[l,+\infty[\to[\epsilon,1]$ a smooth function that is equal to 1 in $[l,L]$ and equal to $\epsilon$ in $[L+1,+\infty[$, then $u$ satisfies
% \be\label{condinitDirichlet}
% u(T,x)\leq g(x_1), \text{ for } x \in[l,+\infty[\times \omega_r.
% \ee
% \vspace{0.1cm}\\
Let us consider the following problem
\be\label{paraboliccylinderv}
\begin{cases}
v_t(t,x)-\Delta v(t,x)=f(v(t,x)), &\text{for } t\in[T,+\infty[, x\in [l,+\infty[\times \omega_r,\\
\partial_\nu v(t,x) =0, &\text{for } t\in[T,+\infty[, x\in [l,+\infty[\times \partial \omega_r,\\
v(t,l,x')=u(t,l,x'), &\text{for } t\in[T,+\infty[, x'\in\omega_r\\
v(T,x)= u(T,x), &\text{for } x \in[l,+\infty[\times \omega_r.
\end{cases}
\ee
It is constructed in such a way that $u$ is a solution of it and $v=\phi_\eta(x_1-c_\eta t-\xi)$ is a supersolution if $\xi\in \R$ is chosen large enough so that $\phi_\eta(l-\xi)\geq 1$.
According to the comparison principle, $v\geq u$.
%\todo{Verifier principe de comparaison sur domaine a angle. OK ici au voisinage de l'angle car $\phi()>1$ la.}
 Thus
$$
\lim_{t\to+\infty} \sup_{x_1>\hat{c}t} u(t,x)\leq \lim_{t\to+\infty} \sup_{x_1>\hat{c}t} v(t,x)=\eta
$$
since $\hat{c}>c_\eta$. As $\eta$ can be chosen arbitrarily small in $(0,\eta_0)$, we infer from this that 
$$
\lim_{t\to+\infty} \sup_{x_1>\hat{c}t} u(t,x)=0
$$
and \eqref{limitec0} is proved.

\begin{Rk}
The same kind of result can be obtained for smooth perturbations of straight cylinders. Precisely, using techniques from \cite{B}, one can easily prove that if $\Omega_\epsilon$ is a family of domains that converges in the $C^{2,\alpha}$ topology to a straight cylinder as $\epsilon \to 0$, then $u_\epsilon $ the solution of \eqref{problem} with condition \eqref{condinf} converges to  1 everywhere. Moreover if $\nu_1(x)\leq 0$ for all $x\in\partial\Omega$ and $\Omega_\epsilon\cap \{x_1>l\}=(l,+\infty)\times \omega_{\epsilon,r}$, the solution $u_\epsilon$ propagates 
%with a constant speed $c_\epsilon$ that tends to
with the 1-D speed $c$.
% as $\epsilon \to 0$.
\end{Rk}
\section{Blocking by a narrow passage} \label{abruptincreasesection}
%%%%%%%%%%%%%%%%%%%%%%%%%%%%%%%%%%%%%%%%%%%%%%%%%%%%%%%%%%%%%%
In this section we prove Theorem \ref{IncreaseThm}.  

There are earlier results on the existence of non constant stable steady states for bistable reaction-diffusion equations with Neumann boundary conditions when there is a narrow passage in a number of cases. 
Matano \cite{M} first showed the existence of non-constant stable solutions in the case of a bounded domain having the shape of an hourglass. The first author of the present paper  together with Hamel and Matano \cite{BHM} established an analogous result for an exterior domain having a narrow passage to a confined region. The paper \cite{BHM}  exhibits a non constant stable steady state in an exterior domain with narrow passage where the area in which the solution is close to 0 is bounded whereas the area where the solution is close to 1 is unbounded. 

Our proof is inspired from \cite{BHM}. 
 Here, we construct a non constant stable steady state for domains where the two areas are unbounded (the one where the solution is close to 0 and the one where the solution is close to 1).
Stability is obtained since the solution is constructed by minimization of an energy.

Actually, we derive a more precise result here. We will show that once the measure of $\Omega\cap\{b<x_1<b+1\}$ is fixed, there exists $\epsilon>0$ small enough such that if $|\Omega\cap\{a<x_1<b\}|<\epsilon$ then there exists a supersolution $w$ of the elliptic problem 
on  $\Omega\cap\{x_1 >a \}$ such that $w=1$ on $\Omega\cap\{x_1=a\}$
and $w$ goes to 0 in $\Omega$, as $x_1\to+\infty$. 

Thus, we consider the reduced problem in $\ds{\Omega'=\{x\in \Omega, \; x_1>a\}}$, (see 
Figure \ref{Omega'}).
\begin{figure}[!h]
\begin{center}
\includegraphics[scale=0.5]{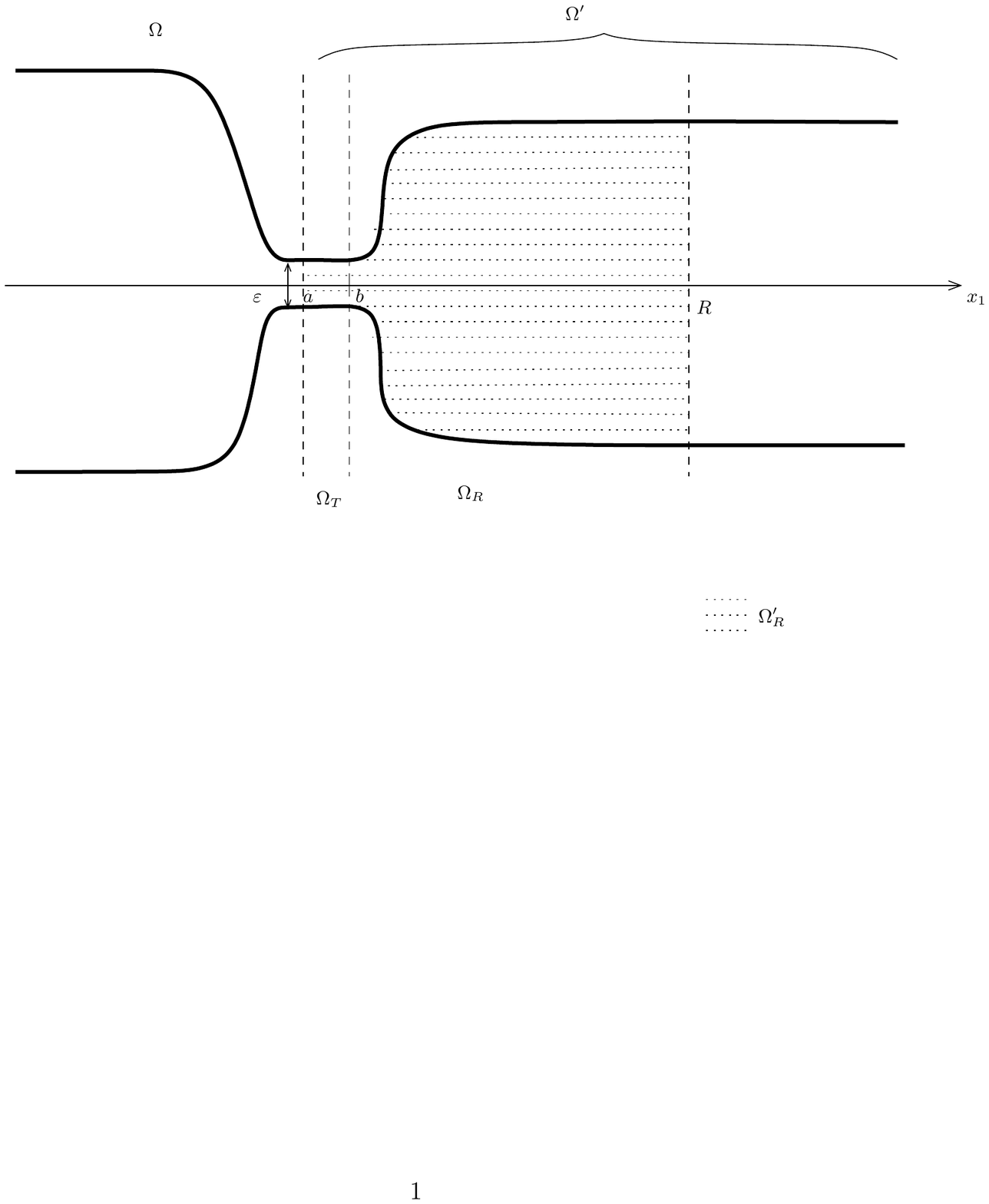}
\includegraphics[scale=0.5]{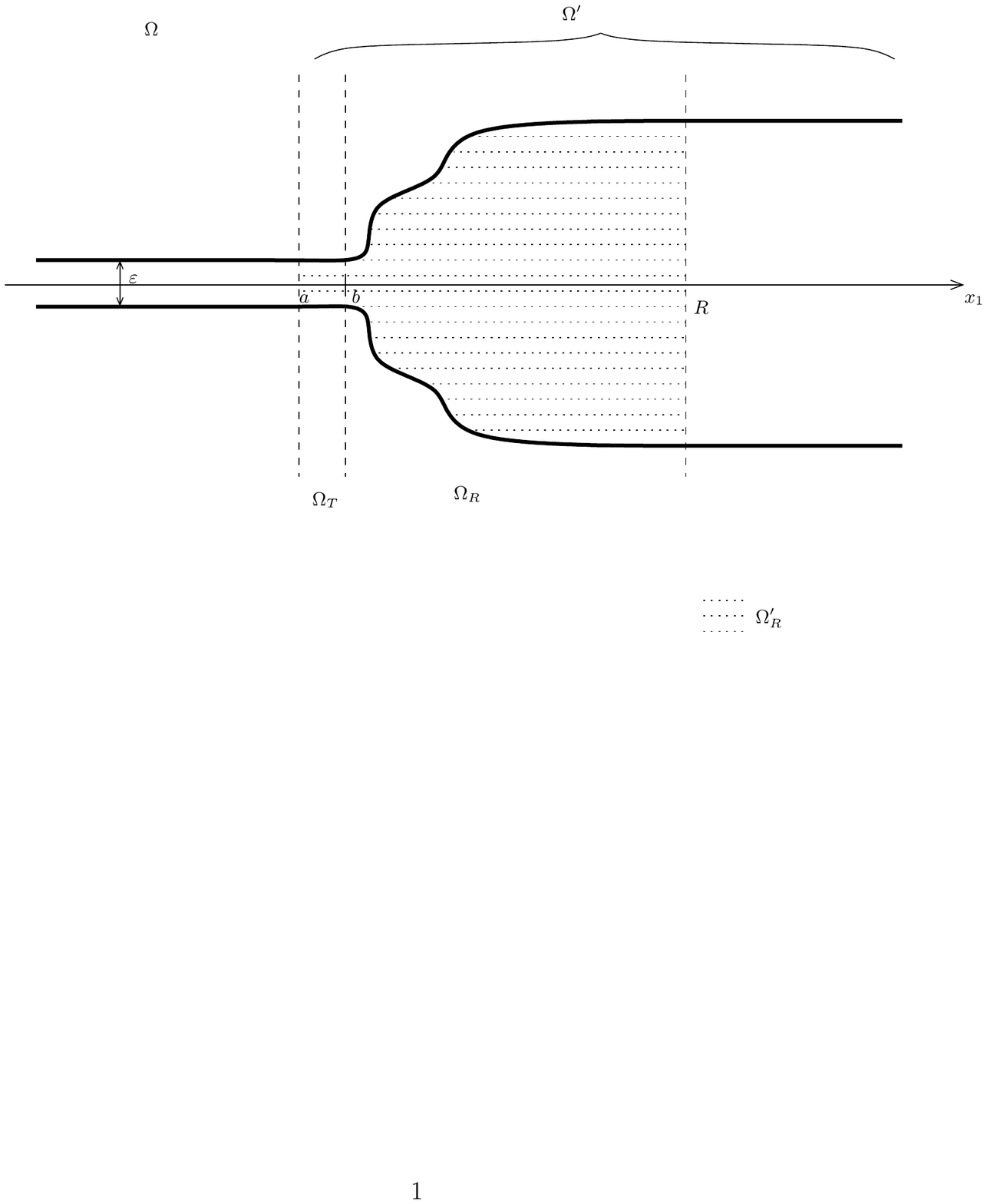}
\caption{\footnotesize Illustrations of the various domains used in the proof of Theorem \ref{IncreaseThm} for two different $\Omega$ satisfying assumptions \eqref{areaeps} and \eqref{areaindeps} in Theorem \ref{IncreaseThm}. $\Omega$ is the initial domain, $\Omega'=\{x\in\Omega, x_1>a\}$, $\Omega'_R=\{x\in\Omega, a<x_1<R\}$, $\Omega_T=\{x\in \Omega, a<x_1<b\}$ and $\Omega_R=\{x\in \Omega, b<x_1<R\}$}\label{Omega'}
\end{center}
\end{figure}
\begin{equation}\label{eqOmega'}
\begin{cases}
\Delta w+f(w)=0 & \textrm{ in } \Omega',\\
\partial_\nu w=0 &\textrm{ on } \partial \Omega'\backslash\{x_1=a\},\\
w\equiv1 &\textrm{ on } \{x_1=a\},\\
\end{cases}
\end{equation}
Such a $w$ blocks the propagation of 
the solution $u$ of \eqref{problem} satisfying \eqref{condinf}. Indeed, by the comparison principle, $u(t,x) \leq w(x)$ in $\Omega'$, whence $u_\infty(x_1,x')\to 0$ as $x_1\to+\infty$.

To find a solution of \eqref{eqOmega'} we proceed in two steps. We first restrict our analysis to a bounded domain $\Omega'_R=\Omega'\cap\{x_1<R\}$, $R\geq b+1$ (see figure \ref{Omega'}) and prove in section \ref{ReducedPb} that there exists a function $0<w_R<1$ in $\Omega'_R$ solution of:
\begin{equation}\label{reducprob}
\begin{cases}
\Delta w+f(w)=0 & \textrm{ in } \Omega'_R,\\
\partial_\nu w=0 &\textrm{ on } \Gamma=\partial \Omega'_R\backslash\big( \{x_1=a\}\cup\{x_1=R\}\big),\\
w\equiv1 &\textrm{ on } \{x_1=a\},\\
w\equiv0 &\textrm{ on } \{x_1=R\}.\\
\end{cases}
\end{equation}
 Then we prove in section \ref{Rinfinity} that $w_R\to w_\infty$ as $R\to+\infty$ with  
%where $w_\infty$ is a solution of \eqref{eqOmega'}
%\begin{equation*}
%\begin{cases}
%\Delta w_\infty+f(w_\infty)=0 & \textrm{ in } \Omega'=\Omega\cap\{x_1>-1\},\\
%\partial_\nu w_\infty =0 &\textrm{ on } \Gamma'=\partial \Omega'\backslash %{x_1=-1\},\\
%w_\infty\equiv1 &\textrm{ on } \{x_1=-1\},
%\end{cases}
%\end{equation*}
%such that 
$w_\infty\to0$ as $x_1\to+\infty$ and conclude, using the general comparison principle, that the solution $u$ of \eqref{problem}-\eqref{condinf} is blocked by $w_\infty$ in the right hand part of the domain.

%%%%%%%%%%%%%%%%%%%%%%%%%%%%%%%%%%

\subsection{Reduced problem}\label{ReducedPb}
For $D\subset \R^N$, we introduce the energy functional defined for all $w\in H^1(D)$,  by
\be\label{energy}
J_D(w)=\int_D \frac{1}{2}|\nabla w|^2+F(w)dx,
\ee
where $F(t)=\int_t^1f(s)ds$. Note that using \eqref{posf} and extending $f$ linearly outside $[0,1]$, $F(t)>0$ for all $t\in\R$ and that $F$ has the shape illustrated in Figure \ref{F}.			
\begin{figure}[!h]
\begin{center}
\includegraphics[scale=1]{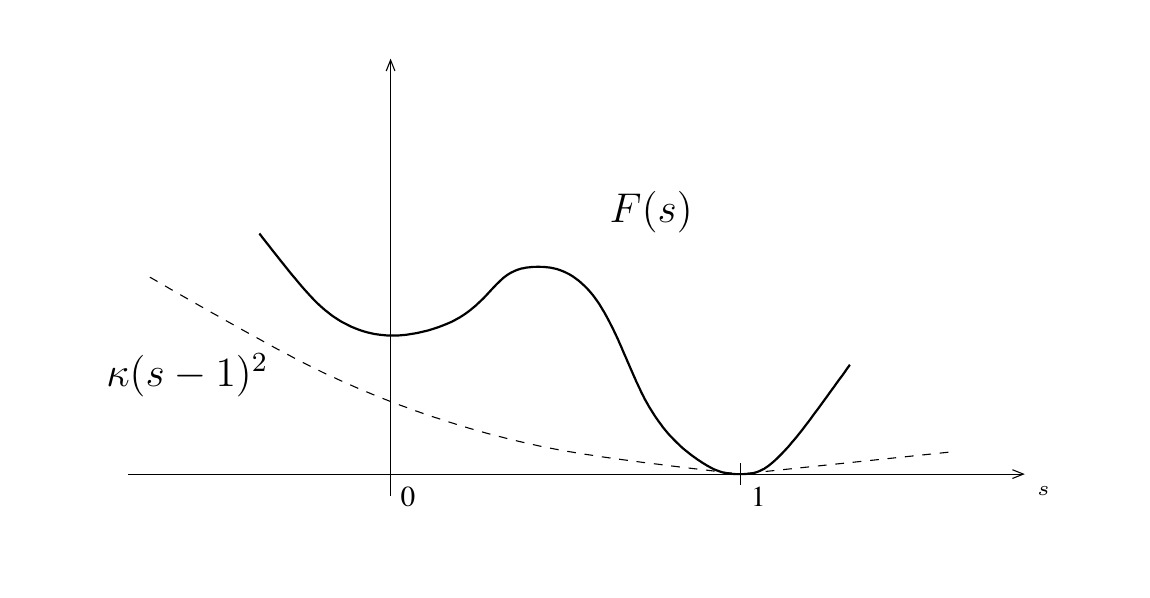}
\caption{\footnotesize{The function $F(s)=\int_s^1f(t)dt$, where $f$ is a bistable function for $s\in[0,1]$ and is extended linearly outside $[0,1]$.}}\label{F}
\end{center}
\end{figure}

Let $\Gamma_L=\{x\in \partial\Omega'_R, x_1=a\}$ and $\Gamma_R=\{x\in\partial\Omega'_R, x_1=R\}$, we define 
$$H^1_{1,0}=\Big\{ w\in H^1(\Omega'_R),\hspace{0.1cm} w\equiv 1 \text{ on } \Gamma_L \text{ and } w\equiv 0 \text{ on } \Gamma_R\Big\}.$$
Finding a local minimizer of $J_{\Omega'_R}$ in an open subset of $H^1_{1,0}$ yields a stable solution of \eqref{reducprob}.

We start by proving that the constant function $w_0\equiv 0$ 
on $\Omega_R=\{x\in\Omega, \; b<x_1<R\}$ is a local minimum of $J_{\Omega_R}$ in $H^1(\Omega_R)$.
%\vspace{0,2cm}\\
\begin{Prop}\label{omegaR}
There exist $\delta>0$ and $\alpha>0$ such that for all $R\geq b+1$, for all $w\in H^1(\Omega_R)$ with ${\lVert w\rVert_{H^1(\Omega_R)}\leq \delta}$ one has 
\be\label{minenergie}
J_{\Omega_R}(w)\geq J_{\Omega_R}(w_0)+\alpha\lVert w \rVert^2_{H^1(\Omega_R)}.
\ee
\end{Prop}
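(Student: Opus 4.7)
The strategy is to exploit that $w_0 \equiv 0$ is a non-degenerate local minimum of $F$: from $F'(0)=-f(0)=0$ and $F''(0)=-f'(0)>0$, together with the continuity of $f'$, there is $M_0>0$ such that
\begin{equation*}
F(w)-F(0) \;\geq\; \tfrac{|f'(0)|}{4}\, w^2 \qtext{whenever} |w|\leq M_0.
\end{equation*}
Globally, since $F\geq 0$ on $\R$ (thanks to the linear extension of $f$ outside $[0,1]$), we always have the crude bound $F(w)-F(0)\geq -F(0)$. The difficulty is that $\|w\|_{H^1(\Omega_R)}$ being small does not make $w$ small pointwise when $N\geq 2$, so we need to control the region where $|w|$ is large by a higher integrability of $w$.

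The key uniform ingredient is a Sobolev embedding with a constant independent of $R$: since $\Omega$ is uniformly $C^{2,\alpha}$, one can cover $\Omega_R$ by finitely overlapping unit-scale balls in each of which $H^1\hookrightarrow L^q$ with uniform constant (for some fixed $q>2$, chosen $\leq 2^{\star}$ when $N\geq 3$), and then sum using $\sum a_i^{q/2}\leq (\sum a_i)^{q/2}$ for $q\geq 2$. This yields $C_S>0$ independent of $R$ such that
\begin{equation*}
\|w\|_{L^q(\Omega_R)} \leq C_S \|w\|_{H^1(\Omega_R)} \qtext{for all} w\in H^1(\Omega_R).
\end{equation*}

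Given this, split $\Omega_R=A\cup A^c$ with $A=\{|w|\leq M_0\}$. On $A^c$ one has $|A^c|\leq M_0^{-q}\|w\|_{L^q}^q$ and $\int_{A^c} w^2\,dx\leq M_0^{-(q-2)}\|w\|_{L^q}^q$. Writing $\int_A w^2=\int_{\Omega_R} w^2-\int_{A^c} w^2$ and combining with Step 1 and the Sobolev bound, one obtains
\begin{equation*}
\int_{\Omega_R}[F(w)-F(0)]\,dx \;\geq\; \tfrac{|f'(0)|}{4}\,\|w\|^2_{L^2(\Omega_R)} \;-\; C\,\|w\|^q_{H^1(\Omega_R)},
\end{equation*}
with $C$ independent of $R$. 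Adding the $\tfrac12\|\nabla w\|^2_{L^2}$ contribution gives
\begin{equation*}
J_{\Omega_R}(w)-J_{\Omega_R}(w_0) \;\geq\; \alpha_0\,\|w\|^2_{H^1(\Omega_R)} \;-\; C\,\|w\|^q_{H^1(\Omega_R)},
\qquad \alpha_0:=\min\!\left(\tfrac12,\tfrac{|f'(0)|}{4}\right).
\end{equation*}

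Since $q>2$, choosing $\delta>0$ so small that $C\delta^{q-2}\leq \alpha_0/2$ and setting $\alpha:=\alpha_0/2$ yields \eqref{minenergie} for every $R\geq b+1$. The main obstacle is the uniform Sobolev embedding in the second step: once its constant is shown to be $R$-independent using the uniform $C^{2,\alpha}$ regularity of $\Omega$ (so that $\Omega_R$ is uniformly Lipschitz up to the two flat caps $\{x_1=b\}$ and $\{x_1=R\}$), everything else is a routine Taylor expansion and absorption argument.
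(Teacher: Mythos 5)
Your proof is correct and follows essentially the same route as the paper: a second-order expansion of $F$ around $0$ (using $F'(0)=0$, $F''(0)=-f'(0)>0$), a Sobolev embedding $H^1(\Omega_R)\subset L^q(\Omega_R)$ with constant uniform in $R\geq b+1$, and absorption of the superquadratic remainder for $\|w\|_{H^1}$ small. The only (inessential) difference is that the paper packages the remainder as $\eta(w)w^2$ with $|\eta(t)|\leq\gamma+\xi|t|^p$ and invokes the embedding directly, whereas you split $\Omega_R$ into $\{|w|\leq M_0\}$ and its complement and use a Chebyshev-type bound; these are equivalent.
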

\begin{proof}[Proof of Proposition \ref{omegaR}]
%\vspace{0.2cm}\\
%\begin{gather*}
We first observe that,
\be\label{EnergieR0}
J_{\Omega_R}(w)=\int_{\Omega_R}  \frac{1}{2}|\nabla w|^2+F(0)+F'(0)w+\frac{F''(0)}{2}w^2+\eta(w)w^2,
\ee
where $\eta:\R\to\R$ is a continuous function such that $\eta(s)\to 0$ as $s\to 0$. This can also be written as
\be\label{EnergieR1}
J_{\Omega_R}(w)=J_{\Omega_R}(w_0)+\int_{\Omega_R}  \frac{1}{2}|\nabla w|^2-\frac{f'(0)}{2}w^2+\eta(w)w^2.
\ee
%\end{gather*}
To exploit \eqref{EnergieR1}, we require the following lemma:
\begin{Lemma}\label{eta}
If $\eta:\R\to\R$ is the function defined in \eqref{EnergieR0}, we have,
\be
\left|\int_{\Omega_R}\eta(z)z^2\right| \leq \mu(z)\lVert z\rVert^2_{H^1(\Omega_R)},
\ee
where $\mu(z)\to0$ as $\lVert z\rVert_{H^1(\Omega_R)}\to 0$
\end{Lemma}
\begin{proof}[Proof of Lemma \ref{eta}]
By extending $f$ linearly at infinity, we may assume that $F$ is quadratic at infinity, which implies that $\eta$ is a bounded function from $\R$ to $\R$, i.e there exists $C\in\R$ such that $|\eta(s)|\leq C$ for all $s\in\R$. Let us fix $p>0$ sufficiently small so that $H^1(\Omega_R)\subset L^{p+2}(\Omega_R)$ independently of $R\geq b+1$.  
Since $ \eta$ is bounded and $\eta(s)\to0$ as $s\to0$, for all $\gamma>0$ there exists $\xi>0$ such that 
\be\label{eta1}
|\eta(t)|\leq \gamma +\xi t^p.
\ee
Hence, 
\be\label{eta2}
\left|\int_{\Omega_R}\eta(z)z^2\right| \leq\gamma\lVert z\rVert^2_{H^1(\Omega_R)} +\xi \lVert z\rVert^{p+2}_{L^{p+2}(\Omega_R)}.
\ee
By using Sobolev embedding we infer that
\be\label{eta3}
\left|\int_{\Omega_R}\eta(z)z^2\right| \leq (\gamma+\xi \lVert z\rVert^{p}_{H^1(\Omega_R)})\lVert z\rVert^2_{H^1(\Omega_R)}.
\ee
Since $\gamma$ can be chosen as small as we want, this inequality yields Lemma~\ref{eta}. 
\end{proof}
As $f'(0)<0$, we can fix $ \alpha=\frac 12 \min(\frac{1}{2},-\frac{f'(0)}{2}) >0$. Thus,
$$\int_{\Omega_R}  \frac{1}{2}|\nabla w|^2-\frac{f'(0)}{2}w^2\geq 2\alpha\lVert w\rVert^2_{H^1(\Omega_R)}$$
and from the previous lemma, there exists $\delta>0$ small enough such that for all $\lVert w\rVert^2_{H^1(\Omega_R)}\leq \delta$, one has $\eta(w)<\alpha$. Substituting in \eqref{EnergieR1} yields 
\be\label{EnergieR2}
J_{\Omega_R}(w)\geq J_{\Omega_R}(w_0)+\alpha\lVert w\rVert^2_{H^1(\Omega_R)}.
\ee
and proposition \ref{omegaR} is proved. 
\end{proof}
Now going back to the domain $\Omega'_R$, we extend the function $w_0$ by
\be\label{omega0}
w_0(x)=w_0(x_1):=\begin{cases} \frac{|x_1-b|}{b-a} &\textrm{for }  x_1\in[a,b),\quad x\in\Omega,\\
0 &\textrm{for } x_1\in[b,R], \quad x\in\Omega.
\end{cases}
\ee
The function $w_0$ is in $H^1_{1,0}$. The next step is to prove the following proposition:
\begin{Prop}\label{minenergtout}
There exists $\delta>0$, such that for all $w\in H^1_{1,0}$ with $\lVert w-w_0\rVert_{H^1(\Omega'_R)}=\delta$ one has 
\be\label{Energie4}
J_{\Omega'_R}(w)> J_{\Omega'_R}(w_0).
\ee
\end{Prop}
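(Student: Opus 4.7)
Set $v:=w-w_0\in H^1(\Omega'_R)$; since $w$ and $w_0$ share the same Dirichlet data, $v$ vanishes in the trace sense on $\Gamma_L\cup\Gamma_R$. A Taylor expansion gives
\begin{equation*}
J_{\Omega'_R}(w)-J_{\Omega'_R}(w_0) = \int_{\Omega'_R}\Bigl[\nabla w_0\cdot\nabla v + \tfrac12|\nabla v|^2 + F(w_0+v)-F(w_0)\Bigr]dx,
\end{equation*}
which I split over $\Omega_T$ and $\Omega_R$. The plan is: on $\Omega_R$ (where $w_0\equiv 0$) Proposition~\ref{omegaR} applied to $v$ yields the coercive lower bound $\alpha\|v\|^2_{H^1(\Omega_R)}$ once $\delta$ is below the threshold there; on $\Omega_T$, every term is made small by the hypothesis $|\Omega_T|<\epsilon$.

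Concretely, on $\Omega_T$ Cauchy--Schwarz produces $\bigl|\int_{\Omega_T}\nabla w_0\cdot\nabla v\,dx\bigr|\leq\frac{\sqrt\epsilon}{b-a}\delta$ and $\bigl|\int_{\Omega_T}f(w_0)v\,dx\bigr|\leq C\sqrt\epsilon\,\delta$ (using $w_0\in[0,1]$, so that $|f(w_0)|$ is bounded). The quadratic remainder of $F$, namely $-\frac12 f'(\xi)v^2$ with $\xi$ between $w_0$ and $w_0+v$, is bounded in $L^1(\Omega_T)$ by $C\|v\|^2_{L^2(\Omega_T)}$, since the linear extension of $f$ outside $[0,1]$ keeps $\|f'\|_\infty$ finite. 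H\"older together with the Sobolev embedding $H^1(\Omega_T)\hookrightarrow L^{p+2}(\Omega_T)$ gives
\begin{equation*}
\|v\|^2_{L^2(\Omega_T)}\leq C|\Omega_T|^{\gamma}\|v\|^2_{H^1(\Omega_T)}\leq C\epsilon^\gamma\delta^2
\end{equation*}
for some $\gamma>0$, and also implies $\|\nabla v\|^2_{L^2(\Omega_T)}=\|v\|^2_{H^1(\Omega_T)}-\|v\|^2_{L^2(\Omega_T)}\geq(1-C\epsilon^\gamma)\|v\|^2_{H^1(\Omega_T)}\geq\tfrac12\|v\|^2_{H^1(\Omega_T)}$ for $\epsilon$ small.

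Summing the bounds on the two regions and setting $c_0:=\min(\alpha,\tfrac14)$ I obtain
\begin{equation*}
J_{\Omega'_R}(w)-J_{\Omega'_R}(w_0)\;\geq\; c_0\,\delta^2 - C\sqrt\epsilon\,\delta - C\epsilon^\gamma\delta^2.
\end{equation*}
Choosing $\delta=K\sqrt\epsilon$ with $K$ large and $\epsilon$ small (so that $\delta$ stays below the Proposition~\ref{omegaR} threshold and $C\epsilon^\gamma<c_0/2$) makes the right-hand side strictly positive, which is exactly the conclusion. I expect the main technical obstacle to be the control of the nonlinear $F$-remainder without any a priori pointwise bound on $w$; this is precisely where the H\"older--Sobolev interpolation is crucial, as it converts the measure smallness $|\Omega_T|<\epsilon$ into the factor $\epsilon^\gamma$ that makes both the quadratic remainder and the loss in the coercivity of $\|\nabla v\|^2_{L^2(\Omega_T)}$ negligible. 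All constants above are independent of $R$, as needed for the subsequent passage $R\to\infty$ in section~\ref{Rinfinity}.
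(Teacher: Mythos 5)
Your decomposition into $\Omega_T$ and $\Omega_R$ and your use of Proposition~\ref{omegaR} on $\Omega_R$ coincide with the paper's proof; the difference lies entirely in how you treat the neck $\Omega_T$. You Taylor-expand $F$ to second order around $w_0$ and control the linear terms by Cauchy--Schwarz against $|\Omega_T|^{1/2}\le\sqrt\epsilon$ and the quadratic remainder by H\"older--Sobolev interpolation. The paper instead never expands $F$ at all: it uses the global bound $F(s)\ge\kappa(s-1)^2$ (coercivity of $F$ around the value $1$, which is the boundary datum on $\Gamma_L$) to get $J_{\Omega_T}(w)\ge\nu\lVert w-1\rVert^2_{H^1(\Omega_T)}$ directly, combined with $J_{\Omega_T}(w_0)\le C\epsilon$ and $\lVert w_0-1\rVert^2_{H^1(\Omega_T)}\le C\epsilon$, yielding $(1)\ge\tfrac\nu2\lVert w-w_0\rVert^2_{H^1(\Omega_T)}-C\epsilon$ in three lines with no Sobolev embedding and no smallness of $\lVert v\rVert$ required on $\Omega_T$. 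Both routes end with the same relation $\epsilon\lesssim\delta^2$ (the paper fixes $\delta$ from Proposition~\ref{omegaR} and shrinks $\epsilon$; you take $\delta=K\sqrt\epsilon$, which is equally compatible with the way $\delta$ is used in section~\ref{Rinfinity}).

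One technical point in your argument needs repair. You invoke the embedding $H^1(\Omega_T)\hookrightarrow L^{p+2}(\Omega_T)$ \emph{on the neck itself}. Its constant is not uniform as the neck narrows: testing with functions constant on cross-sections of a cylinder of radius $r$ shows the embedding constant grows like $r^{-(N-1)p/(2(p+2))}$, and its square exactly cancels the H\"older gain $|\Omega_T|^{p/(p+2)}$, so the chain $\lVert v\rVert^2_{L^2(\Omega_T)}\le C|\Omega_T|^{\gamma}\lVert v\rVert^2_{H^1(\Omega_T)}$ with $C$ independent of $\epsilon$ is not available and as stated the estimate gains nothing. The fix is immediate: since $v$ is defined on all of $\Omega'_R$, bound $\lVert v\rVert_{L^{p+2}(\Omega_T)}\le\lVert v\rVert_{L^{p+2}(\Omega'_R)}\le C\lVert v\rVert_{H^1(\Omega'_R)}=C\delta$ using the Sobolev constant of the fixed domain $\Omega'_R$ (uniform in $R\ge b+1$ by the uniform $C^{2,\alpha}$ regularity, exactly as in Lemma~\ref{eta}), giving $\lVert v\rVert^2_{L^2(\Omega_T)}\le C\epsilon^{p/(p+2)}\delta^2$. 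This replaces your factor $(1-C\epsilon^\gamma)\lVert v\rVert^2_{H^1(\Omega_T)}$ by $\lVert v\rVert^2_{H^1(\Omega_T)}-C\epsilon^\gamma\delta^2$, which still sums with the $\Omega_R$ contribution to $c_0\delta^2-C\sqrt\epsilon\,\delta-C\epsilon^\gamma\delta^2$, and your conclusion then goes through.
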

%This Proposition implies that $J_{\Omega'_R}$ admits a local minimiser in the open set $$\ds{\{ w\in H^1_{1,0}, \lVert w-w_0\rVert_{H^1(\Omega'_R)}<\delta\}},$$ and this minimiser $w_R$ is a stable solution of \eqref{reducprob}. Moreover $w_R\not\equiv0$ and $w_r\not\equiv1$ if $\delta$ is small enough, thus by the strong maximum principle $0<w_R<1$.
\begin{proof}[Proof of Proposition \ref{minenergtout}] 
In the following, all constants will be denoted generically by $C$. 
For all $w\in H^1_{1,0}$,
$$J_{\Omega'_R}(w)- J_{\Omega'_R}(w_0)=\underbrace{J_{\Omega_T}(w)- J_{\Omega_T}(w_0)}_{(1)}+\underbrace{J_{\Omega_R}(w)-J_{\Omega_R}(w_0)}_{(2)}.$$
If $\lVert w-w_0\rVert_{H^1(\Omega'_R)}=\delta$, one has that $\lVert w-w_0\rVert_{H^1(\Omega_R)}=\lVert w\rVert_{H^1(\Omega_R)}\leq\delta$ and one can use Proposition \ref{omegaR} to prove that the second term $(2)$ above is bounded from below, i.e
$$J_{\Omega_R}(w)-J_{\Omega_R}(w_0)\geq\alpha \lVert w-w_0\rVert^2_{H^1(\Omega_R)}.$$
For term (1), as $f(0)=0$, $f'(0)<0$ and $F(1)=0$, there exists $\kappa>0$ such that

$$F(s)\geq\kappa (s-1)^2,$$
for all $s\in\R$ (see Figure \ref{F}). This implies that 
\be\label{Energie5}
J_D(w)\geq\nu\lVert w-1\rVert^2_{H^1(D)},
\ee
for all $D$ compact domain in $\Omega'_R$, $w \in H^1(D)$, where $\nu=\min\{\kappa,\frac{1}{2}\}$. Moreover 
\be\label{Energie6}
J_{\Omega_T}(w_0)=\int_{\Omega_T}\frac{1}{2}|\nabla w_0|^2+F(w_0)\leq C|\Omega_T| \text{ and } |\Omega_T|\leq \epsilon.
\ee
From \eqref{Energie5} and \eqref{Energie6}, we obtain
$$J_{\Omega_T}(w)- J_{\Omega_T}(w_0)\geq \nu\lVert w-1\rVert^2_{H^1(\Omega_T)}-C\epsilon.$$
Then we also use
\begin{align*}
\nu\lVert w-1\rVert^2_{H^1(\Omega_T)}&\geq \frac{\nu}{2}\lVert w-w_0\rVert^2_{H^1(\Omega_T)}-\nu\lVert w_0-1\rVert^2_{H^1(\Omega_T)}\\
&\phantom{==}\geq\frac{\nu}{2}\lVert w-w_0\rVert^2_{H^1(\Omega_T)}-C\epsilon.
\end{align*}
Therefore there exists $\gamma>0$ such that 
$$J_{\Omega'_R}(w)- J_{\Omega'_R}(w_0)\geq \gamma \lVert w-w_0\rVert^2_{H^1(\Omega'_R)}-C\epsilon.$$
Taking $\epsilon>0$ small enough, we get
$$J_{\Omega'_R}(w)- J_{\Omega'_R}(w_0)>0.$$
This completes the proof of Proposition \ref{minenergtout}
\end{proof}
From Proposition \ref{minenergtout}, we get the existence of a local minimizers $w_R$ of the energy functional $J_{\Omega'_R}$ that belongs to $H_{0,1}^1$ such that $\lVert w_R-w_0\rVert<\delta$. This yields a stable solution of \eqref{reducprob}. For $\delta>0$ small enough, we have $0<w_R<1$ using the maximum principle.
\begin{Rk} Note that in this proof we could actually choose any fonction $g(x')$ on $\Omega\cap\{x_1=a\}$ and make the same construction and prove that there exists a solution of
\begin{equation*}\begin{cases}
\Delta w+f(w)=0, &\text{in }\Omega'_R,\\
\partial_\nu w=0 &\textrm{on } \Gamma=\partial \Omega'_R\backslash\big( \{x_1=a\}\cup\{x_1=R\}\big),\\
w\equiv g(x') &\textrm{on } \{x_1=a\},\\
w\equiv0 &\textrm{on } \{x_1=R\}.\\
\end{cases}
\end{equation*}
for $\epsilon$ small enough.
\end{Rk}
The next step will be to study the behaviour as $R\to+\infty$. Let us recall that $\delta$ is fixed for all $R\geq b+1$.

%%%%%%%%%%%%%%%%%%%%%%%%%%%%%%%%%%%%%%%%%%%%%%%%%%%%%
\subsection{Construction of a particular supersolution}\label{Rinfinity}
In this section we construct a particular supersolution of \eqref{problem}. We start with the following Proposition:
\begin{Prop}\label{uinftyprop}
Let $w_R$ be the minimizer of the energy functional $J_{\Omega'_R}$ defined in the previous section. When $R\to+\infty$, up to a subsequence, $w_R$ converges to a solution $w_\infty$ of
\be\label{superprob}
\begin{cases}
\Delta w_\infty+f(w_\infty)=0 & \textrm{ in } \Omega',\\
\partial_\nu w_\infty =0 &\textrm{ on } \Gamma=\partial \Omega'\backslash \{x_1=a\},\\
w_\infty\equiv1 &\textrm{ on } \{x_1=a\},
\end{cases}
\ee
such that $w_\infty\to0$ as $x_1\to+\infty$.
\end{Prop}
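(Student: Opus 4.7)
The strategy is to extract a subsequential limit of $w_R$ via elliptic compactness, verify the PDE and boundary data pass to the limit, and then use the uniform $H^1$ bound on the ``right part'' of the domain together with the uniform $C^{2,\alpha}$ regularity of $\partial\Omega$ to force $w_\infty\to 0$ at infinity.

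First, I would establish a uniform $H^1_{\mathrm{loc}}$ bound on $w_R$. By Proposition \ref{minenergtout}, the minimizer $w_R$ satisfies $\lVert w_R-w_0\rVert_{H^1(\Omega'_R)}<\delta$, where $\delta$ is independent of $R\ge b+1$. Since $w_0\equiv 0$ on $\Omega_R$, this yields the key bound
\be\label{keybound}
\lVert w_R\rVert_{H^1(\Omega_R)}<\delta,
\ee
while on $\Omega_T$ we only have $\lVert w_R\rVert_{H^1(\Omega_T)}\leq \lVert w_0\rVert_{H^1(\Omega_T)}+\delta$, which is also uniformly bounded. Combined with $0<w_R<1$, this gives a uniform $L^\infty\cap H^1_{\mathrm{loc}}$ bound for $w_R$ on $\Omega'$ (extending by $0$ past $\{x_1=R\}$ if convenient).

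Next, I would apply standard interior and boundary Schauder estimates to $\Delta w_R+f(w_R)=0$, using that $\Omega$ is uniformly $C^{2,\alpha}$ and that $w_R\equiv 1$ on $\{x_1=a\}$, $\partial_\nu w_R=0$ on $\partial\Omega'\setminus(\{x_1=a\}\cup\{x_1=R\})$. This yields uniform $C^{2,\alpha}_{\mathrm{loc}}$ bounds on $w_R$ on any fixed compact subset of $\overline{\Omega'}\setminus\{x_1=R\}$. By a diagonal/Arzel\`a--Ascoli argument, a subsequence converges in $C^2_{\mathrm{loc}}(\overline{\Omega'})$ to some $w_\infty$ satisfying the elliptic equation, the Neumann condition on $\partial\Omega'\setminus\{x_1=a\}$, and the Dirichlet data $w_\infty\equiv 1$ on $\{x_1=a\}$. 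Moreover $0\le w_\infty\le 1$ in $\Omega'$, and by weak lower semicontinuity of the $H^1$-norm together with \eqref{keybound},
\be\label{tailbound}
\lVert w_\infty\rVert_{H^1(\Omega\cap\{x_1>b\})}\leq \delta.
\ee

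The main point, and the only nontrivial step, is to deduce from \eqref{tailbound} that $w_\infty(x_1,x')\to 0$ as $x_1\to+\infty$. I would argue by contradiction: suppose there exist $\eta>0$ and a sequence $x^n=(x_1^n,x'^n)\in\Omega$ with $x_1^n\to+\infty$ and $w_\infty(x^n)\geq\eta$. By the uniform $C^{2,\alpha}$ character of $\partial\Omega$, $w_\infty$ enjoys a uniform local Lipschitz bound $L$ (independent of the point), so there exists $r>0$ such that $w_\infty\geq\eta/2$ on $B_r(x^n)\cap\Omega$ for every $n$. The uniform exterior/interior ball condition coming from the uniform $C^{2,\alpha}$ regularity also furnishes a uniform lower bound $\lvert B_r(x^n)\cap\Omega\rvert\geq c_0>0$ independent of $n$. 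Extracting a further subsequence so that $x_1^{n+1}-x_1^n\geq 2r$, the balls $B_r(x^n)\cap\Omega$ are pairwise disjoint and contained in $\Omega\cap\{x_1>b\}$ for $n$ large, whence
\[
\int_{\Omega\cap\{x_1>b\}} w_\infty^2\,dx \;\geq\; \sum_n (\eta/2)^2\, c_0 \;=\;+\infty,
\]
contradicting \eqref{tailbound}. Therefore $w_\infty\to 0$ as $x_1\to+\infty$, which completes the proof.

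The hardest step is the last one: transferring the global $H^1$ integrability on the right tail into a pointwise decay at infinity. Everything hinges on the uniform-regularity Schauder estimates for $\Omega$, which convert integral smallness into pointwise smallness via a disjoint-ball packing argument.
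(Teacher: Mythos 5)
Your proposal is correct and follows essentially the same route as the paper: compactness via $0<w_R<1$ and Schauder estimates, passage to the limit of the uniform $H^1$ bound $\lVert w_R-w_0\rVert_{H^1(\Omega'_R)}<\delta$ (the paper uses Fatou's lemma where you invoke weak lower semicontinuity), and then the identical contradiction argument converting a putative pointwise lower bound at infinitely many far-out points into a divergent $L^2$ integral over disjoint balls of uniformly positive measure. Your explicit appeal to the uniform $C^{2,\alpha}$ regularity to get a gradient bound independent of the point is in fact a slightly more careful rendering of the paper's ``$|\nabla w_\infty|\leq C$ on every compact set'' step.
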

\begin{proof}[Proof of Proposition \ref{uinftyprop}:]
As $0<w_R<1$ and using Schauder estimates, there exists $R_n\to+\infty$ as $n\to+\infty$ such that $w_{R_n}\to w_\infty$ in $C^2_{loc}$ as $n\to+\infty$ and $w_\infty$ is a solution of (\ref{superprob}). It remains to prove that the limit $w_\infty$ satisfies $w_\infty\to0$ as $x_1\to+\infty$. Using Fatou's Lemma:
\begin{align*}
\lVert w_0-w_\infty\rVert^2_{H^1(\Omega')}\leq \liminf_{n\to+\infty}\lVert (w_0-w_{R_n}) \mathds{1}_{\{ -1<x_1<R_n \} }\rVert^2_{H^1(\Omega')}\leq \delta^2
\end{align*}
Then arguing by contradiction, we assume that there exist $\eta>0$ and a sequence $(x_n=(x_{n,1},x'_n))_{n\in\N}$ such that $\ds{x_n\in\overline{\Omega}}$ for all $n\in\N$, $x_{n,1}\to+\infty$ as $n\to+\infty$ and $w_\infty(x_n)>\eta$, for all $n\in\N$. As $w_\infty\in C^2_{loc}$, it implies that $|\nabla w_\infty|\leq C$ on every compact set $K$. For all $x\in B(x_n,\frac{\eta}{2C})\cap\Omega$, 
\begin{align*}
|w_\infty(x)-w_\infty(x_n)|\leq \underset{x\in B(x_n,\frac{\eta}{2C})\cap\Omega}{\max}\lVert\nabla w_\infty\rVert |x-x_n|
\end{align*}
So $w_\infty(x)\geq \frac{\eta}{2}$, for all $x\in B(x_n,\frac{\eta}{2C})\cap\Omega$, for all $n\in\N$. This yields 
$$\lVert w_\infty-w_0\rVert_{L^2(K)}\geq \frac{\eta}{2}|B_{\frac{\eta}{2C}}\cap\Omega|\times (\textrm{ number of } x_n\in K).$$
As $\Omega$ is assumed to be uniformly $C^1$,  $\lVert w_\infty-w_0\rVert_{L^2(K)}\geq \delta$ for $K$ large enough, which is impossible.
\end{proof}
We now extend $w_\infty$ by 1 outside $\Omega'$, i.e 
\be\label{uinfty}
\tilde{w}_\infty(x)=\begin{cases} w_\infty(x) &\textrm{if } x\in\Omega',\\
						1 &\textrm{if } x\in\Omega\backslash\Omega'.\\
\end{cases}
\ee
Then $\tilde{w}_\infty$ is a supersolution of the parabolic problem
\be\label{para}
\begin{cases}
v_t-\Delta v=f(v) &\textrm{in } \Omega,\\
\partial_\nu v=0 &\textrm{on } \partial\Omega,
\end{cases}
\ee
Moreover $\ds \lim_{t\to -\infty}\inf_{x\in \Omega} \tilde{w}_\infty -\phi(x_1-ct)\geq 0$. Indeed for $x_1\leq a$, $\tilde{w}_\infty -\phi(x_1-ct)=1-\phi(x_1-ct)\geq 0$ for all $t\in \R$ and for $x_1\geq a$, $\ds \lim_{t\to -\infty}\phi(x_1-ct)=0$ uniformly in $x_1\geq a$. 
Using the generalised maximum principle \ref{princomp}, we obtain $u(t,x)\leq \tilde{w}_\infty(x)$ for all $t\in\R$ and $x\in\Omega$. As $u$ is increasing in $t$ and bounded between 0 and $\tilde{w}_\infty(x)$, $u(t,x)\to u_\infty(x)$ as $t\to+\infty$ for all $x\in\Omega$ and passing to the limit in time, $u_\infty(x)\leq \tilde{w}_\infty(x)$ for all $x\in\Omega$. We have thus proved Theorem \ref{IncreaseThm}.

%%%%%%%%%%%%%%%%%%%%%%%%%%%%%%%%
%%%%%%%%%%%%%%%%%%%

\section{Partial propagation in domains containing a sufficiently large cylinder} \label{propgeneraldom}
%%%%%%%%%%%%%%%%%%%%%%%%%%%%%%%%%%%%%%%%%%%%%%%%%%%
%In this section we are interested in the existence of propagation phenomena in infinite cylinder without assuming any monotonicity on the diameter of the cylinder.

In this section we investigate the propagation properties of $u$ when $\Omega$ contains an infinite cylinder in the $x_1$-direction with large enough radius.  We will prove Theorem~\ref{Thmpartial}. We assume here that $\R\times B'_{R}\subset\Omega $ where $R>R_0$ for some $R_0$ which we now define. 
 %defined in appendix \ref{prerequisite}.

We consider the following semi-linear elliptic equation with Dirichlet condition in the ball $B_R$ of radius $R$ in $\R^N$ with $N\geq 2$:
 \be\label{eqphi}
\bc
-\Delta z = f(z)  &\text{in } B_R,\\
 z = 0 &\text{on } \partial B_R.\\
\ec \ee
%In this paper, we make a large use of the ``ground state'' solutions. In this section we collect the %properties of these functions that we use in here. 
%ze refer to \cite{BL} for details and proofs of these properties.
For large $R$, positive solutions can be found by minimization of the energy functional:
$$
J(z):= \int_{B_{R}} \frac{|\nabla z(x) |^2}{2}-\left(\int_0^{z(x)}f(s)ds\right)dx
$$
in ${H}^1_0(B_R)$. It is known \cite{BL} that there exists a threshold $R_0>0$ such that for $R<R_0$, there is no positive solution of \eqref{eqphi} while for all $R\geq R_0$, there exists at least one positive solution of \eqref{eqphi}. Moreover, for all $R\geq R_0$, there exists a maximum positive solution which we denote by $z=z_R$ in the sequel.  
 Incidentally, from a result of Rabinowitz \cite{rabinowitz}, it can be inferred that  for all $R>R_0$, Problem \eqref{eqphi} has at least two distinct solutions. %$W_R$. 
 
The maximum solution satisfies $0<z_R<1$ in $B_R$ and is radially symmetric and decreasing with respect to $|x|$ \cite{GNN}. We extend $z_R$ by 0 outside $B_R$ and denote $z_R(x)= w_R(|x|)$ where $w_R$ is defined on $\R^+$. In particular, we know that $\sup z_R= z_R(0) = w_R(0) >\theta$. 
%Moreover it is shown in \cite{BL} that this maximal solution converges to 1 locally uniformly as $R%\to+\infty$. 

Using the function $w_R$ we will now derive the following estimate from below on $u_\infty$ for all
$R\geq R_0$:
\begin{Lemma}\label{comppropcyllemma}
For all $x=(x_1,x')\in \Omega$, we have
\be\label{borneinf}
u_\infty(x)\geq w_R(|x'|).
\ee
\end{Lemma}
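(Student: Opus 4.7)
The plan is to prove, for every $x_1^\star \in \R$ and $x_0 := (x_1^\star, 0)$, the pointwise bound $u_\infty(y) \geq z_R(y - x_0)$ on $B_R(x_0)$. Note that $B_R(x_0) = \{(y_1,y') : (y_1 - x_1^\star)^2 + |y'|^2 < R^2\}$ satisfies $B_R(x_0) \subset \R \times B'_R \subset \Omega$, so the statement is meaningful. Evaluating at $y = (x_1^\star, x')$ with $|x'| < R$ then gives $u_\infty(x_1^\star, x') \geq z_R(0, x') = w_R(|x'|)$, while for $|x'| \geq R$ the claim is trivial since $w_R(|x'|) = 0$. I plan to establish this inequality by initialising it for $x_1^\star$ very negative and then sliding $x_0$ along the $x_1$-axis.

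\emph{Initialisation.} Since $u(t,\cdot) - \phi(\cdot_1 - ct) \to 0$ uniformly in $\overline{\Omega}$ as $t \to -\infty$ and $\phi(-\infty) = 1 > z_R(0)$, I can pick $T < 0$ and $L > 0$ such that $u(T, y) > z_R(0)$ whenever $y_1 < cT - L$. Taking $x_0^{\mathrm{init}} := (cT - L - R, 0)$, the ball $B_R(x_0^{\mathrm{init}})$ lies in $\{y_1 < cT - L\}$, and using $u_t > 0$,
\[
u_\infty(y) \geq u(T, y) > z_R(0) \geq z_R(y - x_0^{\mathrm{init}}) \quad\text{on } B_R(x_0^{\mathrm{init}}).
\]

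\emph{Sliding.} For any target $x_1^\star \in \R$, parameterise $x_0(\tau) := ((1-\tau)(cT - L - R) + \tau x_1^\star, 0)$, $\tau \in [0,1]$, and define $S := \{\tau \in [0,1] : u_\infty \geq z_R(\cdot - x_0(\tau))\text{ on } B_R(x_0(\tau))\}$. Then $0 \in S$ by initialisation and $S$ is closed by continuity. For openness at $\tau_0 \in S$, let $w := u_\infty - z_R(\cdot - x_0(\tau_0))$ on $\overline{B_R(x_0(\tau_0))}$. Both $u_\infty$ and $z_R(\cdot - x_0(\tau_0))$ are classical solutions of $-\Delta u = f(u)$ on $B_R(x_0(\tau_0))$, so $w \geq 0$ satisfies a linear elliptic equation $-\Delta w = c(y)\,w$ with $c$ bounded (since $f \in C^{1,1}$). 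By the strong maximum principle, either $w \equiv 0$, which is ruled out because $z_R = 0$ on $\partial B_R(x_0(\tau_0))$ while $u_\infty > 0$ on $\overline{\Omega}$ (strict positivity of $u_\infty$ follows from $u_\infty \geq u(0,\cdot)$ together with the strong maximum principle and Hopf's lemma for $u$), or $w > 0$ in $B_R(x_0(\tau_0))$; together with positivity on the boundary this gives $\eta := \min_{\overline{B_R(x_0(\tau_0))}} w > 0$. A Lipschitz perturbation argument (sketched below) then yields $u_\infty \geq z_R(\cdot - x_0(\tau))$ on $B_R(x_0(\tau))$ for $|\tau - \tau_0|$ small, so $S$ is open. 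Hence $S = [0,1]$ and taking $\tau = 1$ proves the claim.

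\emph{Main technical point.} The delicate step is the openness of $S$, where two neighbouring balls $B_R(x_0(\tau))$ and $B_R(x_0(\tau_0))$ have to be compared. On the overlap $B_R(x_0(\tau)) \cap B_R(x_0(\tau_0))$, the positive gap $\eta$ easily dominates the Lipschitz perturbation $|z_R(\cdot - x_0(\tau)) - z_R(\cdot - x_0(\tau_0))| = O(|\tau - \tau_0|)$. On the crescent $B_R(x_0(\tau)) \setminus B_R(x_0(\tau_0))$, however, the uniform gap $\eta$ is unavailable; one instead exploits the linear vanishing of $z_R$ at $\partial B_R$ (Hopf on $z_R$) to bound $z_R(\cdot - x_0(\tau)) = O(|\tau - \tau_0|)$ there, while $u_\infty$ remains uniformly positive in a fixed neighbourhood of $\overline{B_R(x_0(\tau_0))}$ by continuity and strict positivity.
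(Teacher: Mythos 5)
Your proof is correct and is essentially the paper's own argument: the paper also slides a translate of $z_R$ along the $x_1$-axis (phrased as translating $u_\infty$ by $a$ and decreasing $a$ from $+\infty$), initialises using $u_\infty\to 1$ as $x_1\to-\infty$ together with $\max w_R<1$, and rules out a first touching point via the strong maximum principle in the interior and $z_R=0<u_\infty$ on $\partial B_R$. Your open-closed formulation merely makes explicit (in the crescent estimate) a continuity detail the paper leaves implicit, so there is no substantive difference.
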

%u_\infty (x_1, x') \geq w_R (|x'|) \qtext{for all} x_1\in \R, \, x'\in\R^{N-1}.
%\ee
\begin{proof}
This is obtained by a simple sliding argument. We know that for fixed $R$, we have 
$\max w_R < 1$. Therefore, for large enough $a>0$, we have $ u_\infty (x_1-a, x')Ê> \max w_R \geq w_R(|x|)$ for
all $x= (x_1, x')\in B_R$, hence everywhere in $\Omega$. Let $u^a (x_1, x') := u_\infty (x_1-a, x')$. Both functions $u^a$ and $z_R$ are solutions of the same elliptic equation in $B_R$ and $z_R=0$ while $u^a >0$  on the boundary of $B_R$. We also know that $u^a > z_R$ for large enough $a$. We now start decreasing $a$. From the strong maximum principle we infer that  the function $z_R$ has to stay below $u^a$ for all $a\in \R$. Consequently,
\[
u_\infty (x_1, x') \geq \max_{a\in \R} z_R(x_1 +a, x') =  w_R (|x'|) \qtext{for all} x_1\in \R, \, x'\in\R^{N-1}
\]
which proves the claim.
\end{proof}
Next, we prove that $u_\infty$ is bounded from below by a positive number up to the boundary of
$\R\times B_R'$. This is stated in the following lemma.
\begin{Lemma}\label{uwdelta}
There exists $\delta >0$ such that
\be
u_\infty(x_1,x')\geq w_R(|x'|)+\delta \quad \text{for } x\in \R\times B_R'.
\ee
\end{Lemma}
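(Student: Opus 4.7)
The plan is to combine a strict-subsolution estimate for $v(x_1, x') := w_R(|x'|)$ with a compactness-by-translation contradiction argument. First, using the radial ODE $w_R''(r) + \tfrac{N-1}{r} w_R'(r) + f(w_R(r)) = 0$ satisfied by $z_R$, one computes
\[
\Delta v + f(v) = w_R''(|x'|) + \tfrac{N-2}{|x'|} w_R'(|x'|) + f(w_R(|x'|)) = -\tfrac{1}{|x'|}\, w_R'(|x'|).
\]
This quantity extends continuously to $-w_R''(0) > 0$ at $|x'|=0$ (since $N w_R''(0) = -f(w_R(0)) < 0$ in view of $w_R(0) > \theta$) and to $-w_R'(R)/R > 0$ at $|x'| = R$ (by Hopf applied to $z_R$, since $z_R > 0$ in $B_R$ vanishing on $\partial B_R$ forces $w_R'(R) < 0$). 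Hence there exists $\gamma_0 > 0$ with $\Delta v + f(v) \geq \gamma_0$ throughout $\R \times \overline{B_R'}$. Writing $f(u_\infty) - f(v) = c(x) h$ with $c\in L^\infty$, the function $h := u_\infty - v \geq 0$ then satisfies the key differential inequality
\[
\Delta h + c(x)\, h \,\leq\, -\gamma_0 \qquad \text{on } \R \times B_R'.
\]

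I would then argue by contradiction: suppose $\inf_{\R \times B_R'} h = 0$ along a sequence $x_n = (x_n^1, x_n')$ with $h(x_n) \to 0$. Translating, set $U_n(y) := u_\infty(y + x_n^1 e_1)$, a solution of the equation on $\Omega_n := \Omega - x_n^1 e_1 \supset \R \times B_R'$ with Neumann condition. Invoking interior and boundary Schauder estimates together with the uniform $C^{2,\alpha}$ regularity of $\partial \Omega$, one extracts a subsequence along which the translated domains converge locally in the $C^{2,\alpha}$ Hausdorff sense to a limit domain $\Omega^* \supset \R \times B_R'$ and $U_n \to U^*$ in $C^2_{loc}(\overline{\Omega^*})$, with $U^* \geq v$ solving the equation and Neumann condition on $\Omega^*$. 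The limit difference $H^* := U^* - v$ inherits the inequality $\Delta H^* + c^*(x)\, H^* \leq -\gamma_0$ on $\R\times B_R'$, and after a further extraction $x_n' \to x_*' \in \overline{B_R'}$ with $H^*(0, x_n') = h(x_n) \to 0$.

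If $|x_*'| < R$, then $(0, x_*')$ is an interior minimum of $H^* \geq 0$ valued $0$, so $\Delta H^*(0, x_*') \geq 0$ and the inequality collapses to $0 \leq -\gamma_0 < 0$, a contradiction. If instead $|x_*'| = R$, then $v(0, x_n') = w_R(|x_n'|) \to 0$ forces $U^*(0, x_*') = 0$; one writes $-\Delta U^* = f(U^*) = \bar c(x)\, U^*$ with $\bar c\in L^\infty$ (since $f(0) = 0$ and $f \in C^{0,1}$), and depending on whether $(0, x_*')$ lies in the interior of $\Omega^*$ or on $\partial\Omega^*$, either the strong maximum principle or Hopf's boundary-point lemma combined with the Neumann condition $\partial_\nu U^*(0, x_*') = 0$ forces $U^* \equiv 0$ in a neighbourhood of $(0, x_*')$. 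This contradicts the pointwise bound $U^*(y_1, 0) \geq w_R(0) > 0$ obtained by passing Lemma \ref{comppropcyllemma} to the limit along the translated subsequence. The principal obstacle is precisely this boundary case $|x_*'|=R$, where the comparison degenerates because $v \equiv 0$ on $\R \times \partial B_R'$: resolving it requires controlling the Hausdorff convergence of the translated domains up to $\partial\Omega^*$ and carefully combining Hopf's lemma with the Neumann condition to exclude $U^* \equiv 0$ near the limit point.
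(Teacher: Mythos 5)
Your proof is correct and follows essentially the same route as the paper's: a contradiction argument via translations in $x_1$, compactness and elliptic estimates, and the strong maximum principle/Hopf lemma at the limiting touching point, with the delicate case $|x_*'|=R$ resolved through the Neumann condition and the positive lower bound $u_\infty(\cdot,0)\geq w_R(0)$ exactly as in the paper. The only variation is your quantitative strict-subsolution estimate $\Delta v+f(v)\geq\gamma_0>0$, which lets you dispatch the interior case by a pointwise evaluation at the minimum rather than by the strong maximum principle applied to $u_\infty-w_R(|x'|)$; this is a harmless, slightly more self-contained refinement of the same argument.
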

\begin{proof}
Assume by contradiction that for all $n\in \N^*$ there exists $x_n=(x_{1,n},x'_n)\in\R\times B_R'$ such that 
$$
u_\infty(x_{1,n},x'_n)\leq w_R(|x'_n|)+\frac 1n.
$$
Consider $v_n(x)=u_\infty(x_{1,n}+x_1, x')-w_R(|x'|)$ a function defined on $\R\times B_R'$. Then, up to extraction, we can assume that $x_n' \to x_0'$,  $v_n\to v$ in $C^2_{loc}(\R\times \overline{B_R'})$ and $v$ verifies
\begin{gather*}
v\geq 0, \quad v(0, x_0')=0, \quad v\not \equiv 0,\\
-\Delta v=c(x)v \text{ with } c \text{ bounded,}\\
v>0\quad \text{in } \partial B_R'.
\end{gather*} 
The last positivity property is obtained from $u_\infty >0$ in $\Omega$ and by using that $u_\infty$ satisfies the Neuman boundary condition, and passing to the limit in it, if need be. This contradicts the strong maximum principle.
\end{proof}
As a conclusion, $\displaystyle \inf_{\R \times B'_{R}} u_\infty\geq \delta$ and
%This estimate clearly yields Theorem~\ref{Thmpartial}. Indeed, assume that $\Omega$ contains the cylinder $\R \times B_R'$ for any $R > R_0$. Then, let $R_0 < R$. From the above inequality, it follows that
%\[
%\inf_{x_1\in\R, |x'|\leq R_0} u_\infty (x_1, x') \geq w_R(R_0) >0.
%\]
the proof of Theorem~\ref{Thmpartial} is thereby complete. \qed
\begin{Rk} Note that, Lemma \ref{uwdelta} actually proves a somewhat stronger result than the one stated in Theorem \ref{Thmpartial}\end{Rk}

\bigskip

In order to conclude that the state $u\equiv 1$ invades the whole domain, one has to require further assumptions on the  geometry of $\Omega$ . Indeed in domains as illustrated in Figure \ref{proppartial} there is propagation in the sense given above but the propagation could be blocked when entering some parts of the domain. One can find in \cite{BHM} or \cite{B} arguments leading to partial propagation in exterior domains and examples of domains where the propagation is actually only partial.
\begin{figure}[h!]
\begin{center}
\includegraphics[scale=0.75]{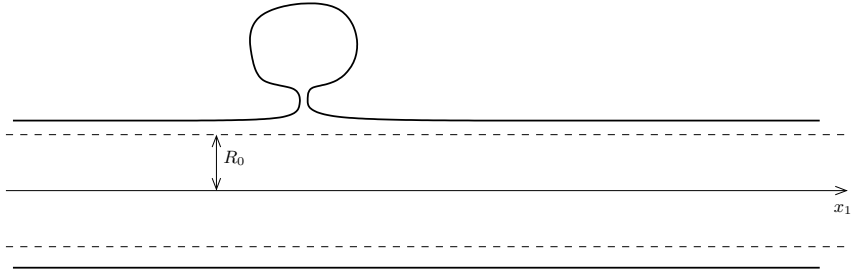}
\caption{\footnotesize{Example of domain in dimension 2 where propagation could be blocked in the small area on the top of $\Omega$.}}\label{proppartial}
\end{center}
\end{figure}

We now turn to this question of finding conditions under which the invasion by 1 is complete. 
%We are therefore interested in the solutions of  the limiting stationary problem
%In this section we establish sufficient conditions on the domain $\Omega$ that yield complete %propagation of 1 in $\Omega$.
Since $u(t,\cdot)\to u_\infty$ locally uniformly in $\Omega$ as $t\to \infty$, the limit $u_\infty$ is a solution of the following stationary problem
\begin{equation} \label{stateq}
\begin{cases}
-\Delta u_\infty=f(u_\infty) &\text{in }\; \Omega,\\
\partial_\nu u_\infty=0 &\text{on }Ê\; \partial \Omega,\\
u_\infty(x)\to1 &\text{as } \; x_1\to-\infty,
\end{cases}
\end{equation}
We are thus led to finding geometric assumptions on $\Omega$ that imply that $u_\infty\equiv1$. 

It is interesting to note that this is related to a classical result of Matano  \cite{M}  and Casten and Holland \cite{CH}  regarding the following non-linear elliptic Neumann boundary value problems in {\em bounded} domains $D$:
\begin{equation} \label{MCH}
\begin{cases}
-\Delta u=f(u) & \text{in } D,\\
\partial_\nu u=0 & \text{on } \partial D.
\end{cases}
\end{equation}
These authors have shown that if $D$ is a {\em convex and bounded} smooth domain, then the only stable solutions are constants. It is a natural question to ask whether there are extensions of this result to unbounded domains. What further conditions imposed on the solution and which geometrical conditions on the unbounded domain $D$ garantee that the only 
 stable solutions are constants? Our results on complete propagation are partial answers to this question for the solution $u_\infty$. More general formulations are an interesting open problem.
 %\subsection{Partial propagation}\label{partialprop}

%\begin{equation}\begin{cases} \label{stateq}
%-\Delta u =f(u) &\text{in } \Omega,\\
%\partial_\nu u=0 &\text{on } \partial\Omega,\\
%u(x_1,x')\to 1 &\text{when } (x_1,x')\in\Omega,\:x_1\to-\infty.
%\end{cases}
%\end{equation}
%This problem has been studied by Casten and Holland in \cite{CH} and Matano in \cite{M} for
%bounded domains. We are interested in these same questions on the geometric conditions that %lead non constant solution to be unstable but in unbounded domains.

We have shown in Theorem~\ref{IncreaseThm} that there exists a stable non constant 
solution of the previous stationary problem for domain containing a transition area of small enough measure. We have also shown in Theorem~\ref{decreaseThm} that $u_\infty \equiv 1$ for propagation in the direction of a decreasing domain. In the next sections we derive two different types of geometrical conditions that garantee the complete invasion by 1.

%%%%%%%%%%%%%%%%%%
%%%%%%%%%%%%%%%%%%
%%% Fin relecture 05/01 ˆ 00h35
%%%%%%%%%%%%%%%%%%
%%%%%%%%%%%%%%%%%%%%%%%%%%%%%%%%%%%%%%%%%%%%
\section{Complete propagation for  domains that are ``star-shaped" with respect to an axis}
% the sliding cylinder method}
\label{completeprop}
%%%%%%%%%%%%%%%%%%%%%%%%%%%%%%%%%%%%%%%%%%%%%%
The object of this section is to prove the result of complete invasion of Theorem \ref{Thmslidingcylinder}, that is $u_\infty\equiv 1$ in $\O$.
We consider a domain $\O$ such that $\O\supset \R \times B'_{R}$, with $R>R_0$, $R_0>0$ defined in section \ref{propgeneraldom}. Moreover we assume that at each point on the boundary $x=(x_1,x')\in\pO$, the outward unit normal $\nu$ makes a non-negative angle with the direction $x'$. To prove the result, we introduce here a new method which rests on sliding cylinders.

As in section \ref{propgeneraldom}, for all $R>R_0$, we denote $z=z_R$ the maximal positive solution of 
$$
\begin{cases}
 -\Delta z=f(z) &\text{in } B_{R},\\
 z=0 &\text{on } \partial B_{R}.
 \end{cases}
$$
It satisfies $0<z_R<1$ in $B_R$ and is radially symmetric and decreasing with respect to $|x|$. We extend $z_R$ by 0 outside $B_R$. We denote $z_R(x)= w_R(|x|)$ where $w_R$ is defined on $\R^+$. In particular, we know that $\sup z_R= z_R(0) = w_R(0) >\theta$.

By shifting the axis of the cylinder in orthogonal directions and rotating the cylinder, we will now construct from $w_R$ a new axisymmetric function that is a subsolution. %The coordinates of a point $x\in\R^N$ are denoted 
%$x:= (x_1, y)$ where $y$ stands for $y= (x_2, \ldots, x_N)$.
Let $h\geq 0$ a given real and $e\in \Sm$ a given unit vector defining a direction in $\Rm$.
We define, for all $x'\in\Rm$:
\[
\psihe (x') : = w_R (|x' - h e|)
\]
so that $\psihe (x') >0$ on $B'_R (he):= B'_R + he$, and
\be\label{subsol}
\ph (x') := \max_{e\in \Sm} \psihe (x').
\ee

Note that, since $w_R(|x'|)$ is axisymmetric and decreasing, $\ph(x')=w_R(|x'-h\frac{x'}{|x'|}|)$ for $x'\neq 0$. Hence $\ph$ is also  axisymmetric but not decreasing away from the origin anymore. Moreover $\ph(x')>0$ if and only if $h-R<|x'|<h+R$. Finally since $\ph$ is a supremum of a family of subsolutions, it is a  generalised subsolution, as defined in \cite{BHbook}. But we will only use classical subsolutions.
We consider it to be defined on all of $\R^N$, so that it is independent of the variable $x_1$.

By Lemma \ref{uwdelta}, we know that $u_\infty (x)> w_R(|x'|) \equiv \phi_0(x')$ for all $x\in\O$. 
We claim that $u_\infty(x) \geq \phi_{h}(x') $ for all $x\in \Omega$ and all $h\geq 0$. Hence $u_\infty(x)\geq \phi_{|x'|}(x')=w_R(0)> \theta$ and we will prove later on that this implies $u_\infty\equiv 1$.

So let us assume by contradiction that there exist $h_0>0$ and $x_0\in \Omega$ such that $u_\infty(x_0) < \phi_{h}(x'_0)$. We define
\be \label{he}
\he= \sup \{ h \geq 0\, , \; \forall h'\in [0,h] \;  \forall x\in\O  \;\; u_\infty(x) \geq \phi_{h'}(x')  \}
\ee
Clearly, $\he\geq 0 $ is well defined and finite. By continuity, we know that
$u_\infty(x) \geq \pe (y)$ for all $x\in\O$.

By definition of $\he$, there exists a sequence $h_n\searrow\he$ and a sequence of point 
$x_n\in\O$ such that $ u_\infty(x_n) < \phi_{h_n} (x'_n)$ where $x_n= (x_{1,n}, x'_n)$. Without loss of generality, we can assume that the sequence is bounded. Indeed, if the sequence $(x_n)_n$ is not bounded, then we can shift the origin by $(x_n)_n$  so that now $x_{1,n}=0$ but the domain and the function change with $n$. We can pass to the limit on these by a standard compactness argument.

Therefore, we can assume that the sequence $(x_n)_n$ converges to some point $p=(0, p')\in\overline{\O}$. In the limit, we get
$u_\infty(p ) \leq \pe(p')$ and since $u_\infty\geq\pe$ everywhere, we get $u_\infty(p') = \pe(p')$. First, we know from Lemma \ref{uwdelta}, that there exists $\delta>0$ such that
$u_\infty(x_1,0) \geq \max \phi + \delta$ for all $x_1\in\R$, so that it is impossible that $p'=0$.
It is also impossible that $p\in \O$. Indeed, there exists some $e\in\Sm$ such that $u_\infty(p') = \pe(p')= \psi_{h^\star,e} (p')>0$ and
$u_\infty\geq \psi_{h^\star,e}$ everywhere. In view of the strong maximum principle (recall that $u_\infty-\psi_{h^\star,e}$ satisfies some linear elliptic equation, since $u_\infty$ and $\psi_{h^\star,e}$ satisfy the same semi-linear equation), we infer that $u_\infty\equiv\psi_{h^\star,e}$. But this is impossible as $\psi_{h^\star,e}$ vanishes somewhere.

It remains to consider the case that $p\in\pO$. Since $u_\infty>0$, we know that 
$\he -R < |p'| < \he +R$. Let us show that, necessarily, $\he \leq |p'| < \he +R$. Indeed, suppose to the contrary that $\he -R < |p'| < \he $. Recall that $p' \not=0$. In the region
$\he -R < r <\he$, the function $\ph $ is decreasing with $h$. Thus, for small enough $\he -h >0$ we see that $\ph (p') > \pe(p') = u_\infty ( p)$. But this is in contradiction with the definition of $\he$ that requires
$u_\infty\geq \ph$ for such an $h$. We have reached a contradiction and, therefore, we know that
$\he \leq |p'| < \he +R$. 
%Now, for the point $q$, there exists a certain maximizing direction $e$ such that $\psihe(q)= \ph(q)$. Since $V$ is spherically symmetric decreasing away from the center [Gidas, Ni, Nirenberg], it is necessary for $e$ to be maximizing that $e$ be aligned with the line from $0$ to $q$, so that the distance from $q$ to the center $\he e$ be minimized.\\

We have now reached a situation where $u_\infty\geq \pe$ everywhere, $u_\infty(p )= \pe( p')=w_R(|p'-h^\star \frac{p'}{|p'|}|)=w_R(||p'|-h^\star|)$.

We compute the outward normal derivative using the fact that $w_R$ is decreasing to get
\[
\pn \pe (p') = - |w'_R(|p'-h^\star \frac{p'}{|p'|}|)| \frac{p'}{|p'|} \cdot \nu \leq 0.
\]
The term $\frac{p'}{|p'|}$ comes from the derivative of $|p'-h^\star \frac{p'}{|p'|}|$ with respect to $p'$ and the information that $\he \leq |p'| < \he +R$. The last inequality follows from our assumption 
\[
 \nu \cdot \frac{p'}{|p'|} \geq 0 \qtext{on} \pO
 \]
 The proof is thereby complete.
 
Now we want to prove that $u_\infty\equiv 1$ in $\Omega$. Assuming that $u_\infty\not\equiv1$, there exists a sequence $(x_n)_n$ in $\Omega$ such that  $u_\infty(x_n)\to\underset{x\in\Omega}{\inf} u_\infty(x)\in [w_R(0),1)$ as $n\to+\infty$. We recall that $w_R(0)>\theta$.
%If $(x_n)$ is bounded, up to extraction $x_n\to\bar{x}\in\Omega$ and using elliptic estimates we have that 
%$$-\Delta u_\infty(\bar{x})=f(u_\infty(\bar{x}))>0,$$ which is impossible.\\
%Otherwise if $(x_n)$ is not bounded, 
Letting 
$u_n(x):=u_\infty(x_n+x)$ and $D_n=\{x\in\R^N, \; x+x_n\in \Omega\}$, 
we know that $u_n\to \overline{u}$ and $D_n\to D$ as $n\to+\infty$ such that
$$
\begin{cases}
-\Delta\overline{u}=f(\overline{u})  &\text{in }D, \\
\partial_\nu \overline{u}=0 & \text{on } \partial D. 
\end{cases}
$$
If $0\in \partial D$, it contradicts Hopf's Lemma and if $0\in D$, then  
$\Delta\overline{u}(0)\leq 0$, but $f(\overline{u}(0))>0$
which is impossible.
\qed

\begin{Rks}
\begin{itemize}
\item
The assumption \pref{normal} is a star-shaped type property. It essentially says that any section in planes orthogonal to the $x_1$-- axis is star shaped with respect to the trace of the axis in this plane. Hence the name ``star shaped with respect to an axis'' 
\item
We can also treat the case when ``$\O$ ends to the right''. Namely $\O$ is a straight cylinder as $x_1\to -\infty$, and say is bounded to the right.
\end{itemize}
\end{Rks}
\begin{figure}[h!]
\begin{center}
\includegraphics[scale=0.4]{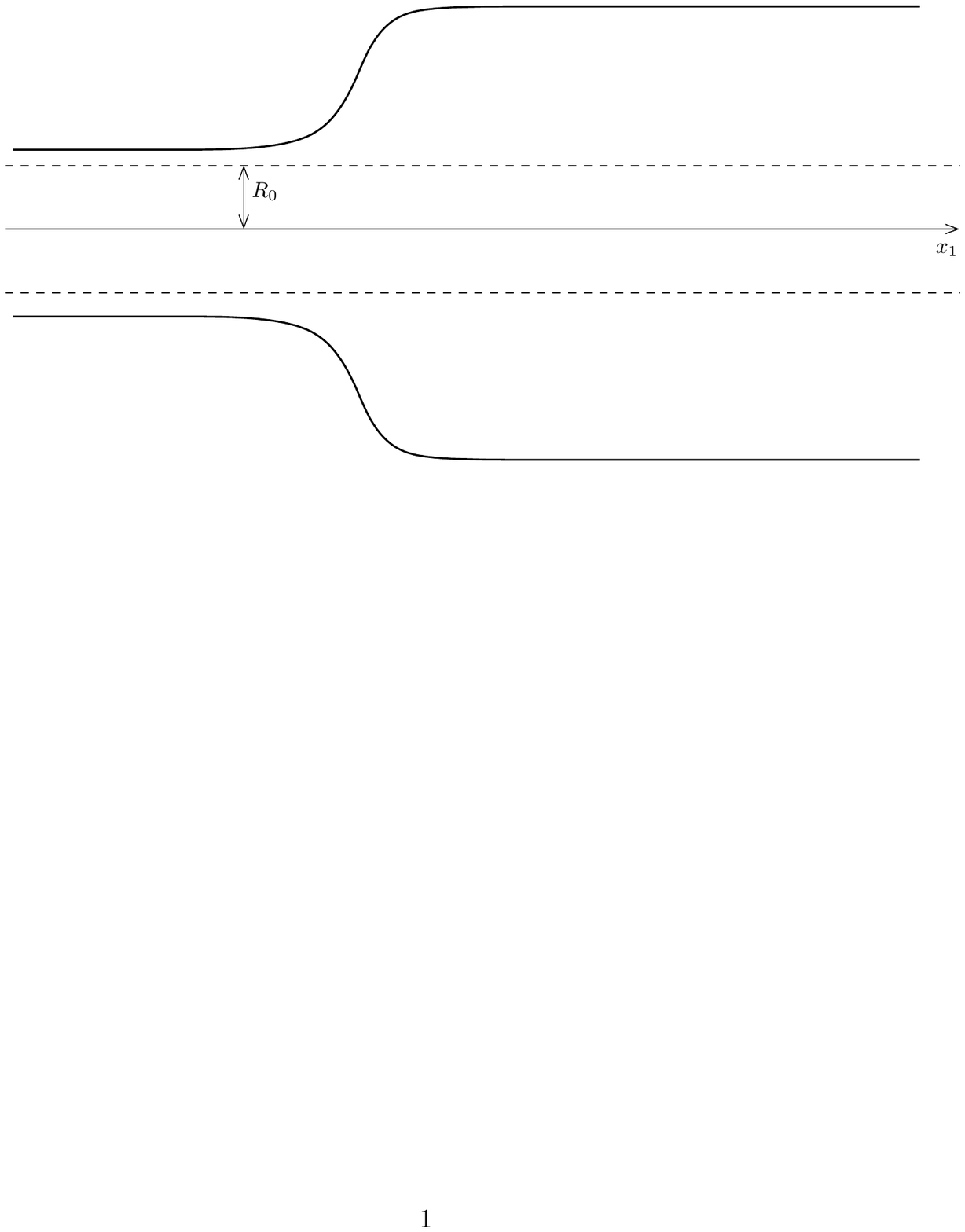}\hfill
\includegraphics[scale=0.5]{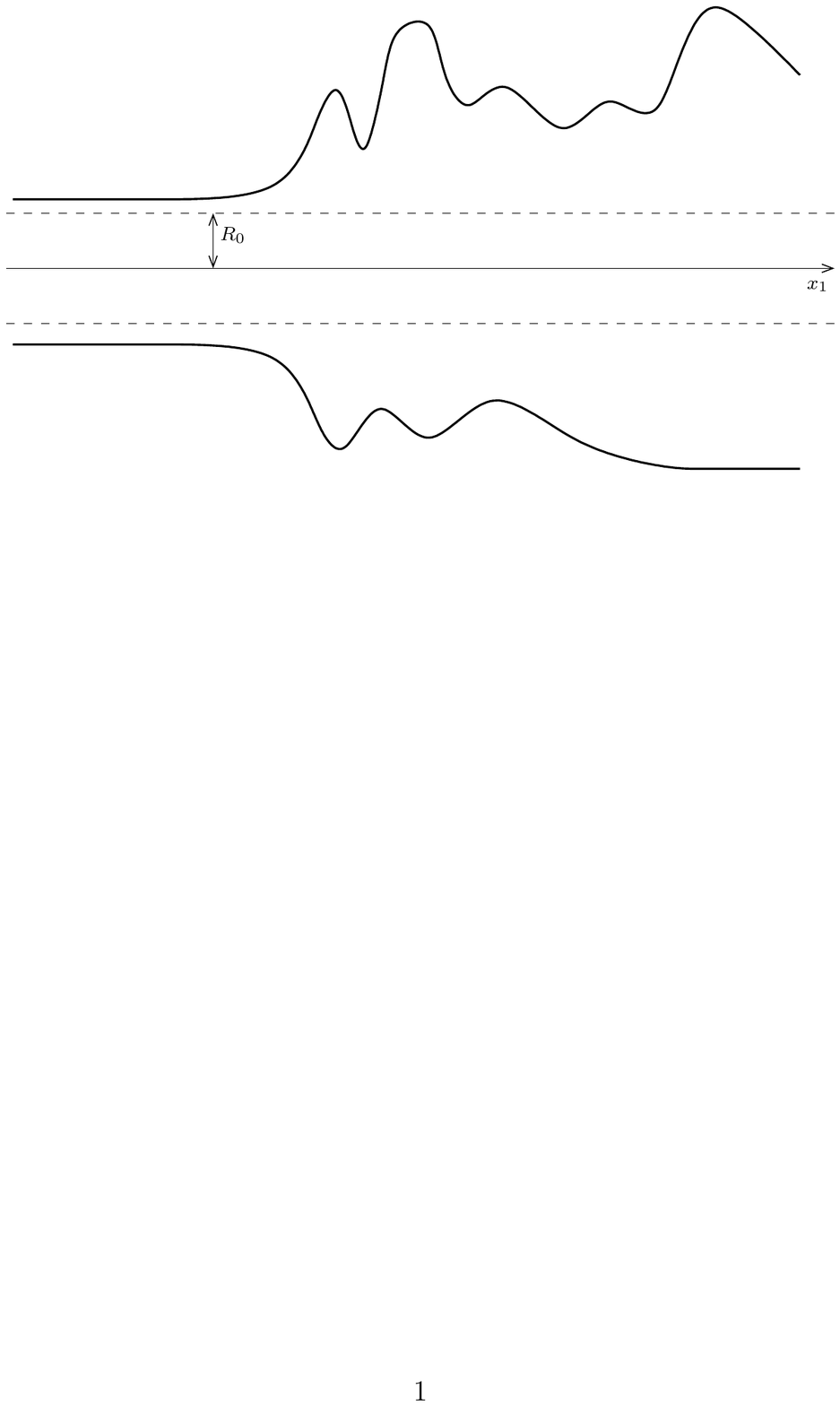}
\caption{\footnotesize{Examples of domains that satisfy the sliding cylinder assumption in dimension 2.}}\label{slidcylfigure}
\end{center}
\end{figure}
\begin{figure}[h!]
\begin{center}
\includegraphics[scale=0.4]{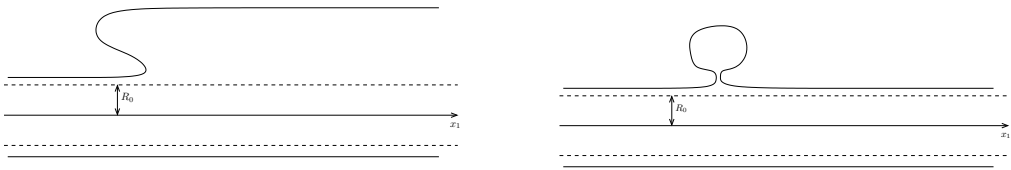}\hfill
\includegraphics[scale=0.4]{Omeganocyl2}
\caption{\footnotesize{Examples of domains that do not satisfy the sliding cylinder assumption in dimension 2.}}\label{noslidcylfigure}
\end{center}
\end{figure}
This proposition proves the complete propagation of 1 in a large variety of domains $\Omega$ but examples of domains that are not covered by this theorem are presented in figure \ref{noslidcylfigure}. Some domains, as the figure on the left of Figure \ref{noslidcylfigure}, that do not satisfy starshape assumption with respect to an axis  nevertheless satisfy the conditions  of Theorem \ref{increaseThmprop} and thus satisfy the complete propagation property as we will now prove in the next section.

%%%%%%%%%%%%%%%%%%%%%%%%%%%%%%%%%%%%%%%%%%%%%%%%%%%%%%%%%%
\section{Complete propagation in the direction of an increasing domain} \label{completepropwidening}
In this section we prove Theorem \ref{increaseThmprop}. 
Thus we assume that $\Omega$ satisfies the following assumptions:
\begin{subequations}\label{Omegawideningcompleteprop}
\be%\tag{Wa}
\R\times B'_{R}\subset\Omega
\ee
where $R> R_1$ and $R_1$ is defined below. 
We further assume that there exists $L>0$ such that ,
\be%\tag{Wb}
\Omega_L^r:=\left\{(x_1,x')\in\Omega,\: x_1>L\right\} \text{ is convex,}
\ee 
and that there exists $C>0$ such that 
\be
\{x\in \Omega, \; x_1<L+R\}\subset (-\infty, L+R) \times B_C,
\ee 
and \be%\tag{Wc}
\nu_1(x)\leq 0 \text{ for all } x\in\partial\Omega \text{ with } x_1<L+R 
.
\ee
\end{subequations}
\begin{figure}[h!]
\begin{center}
\includegraphics[]{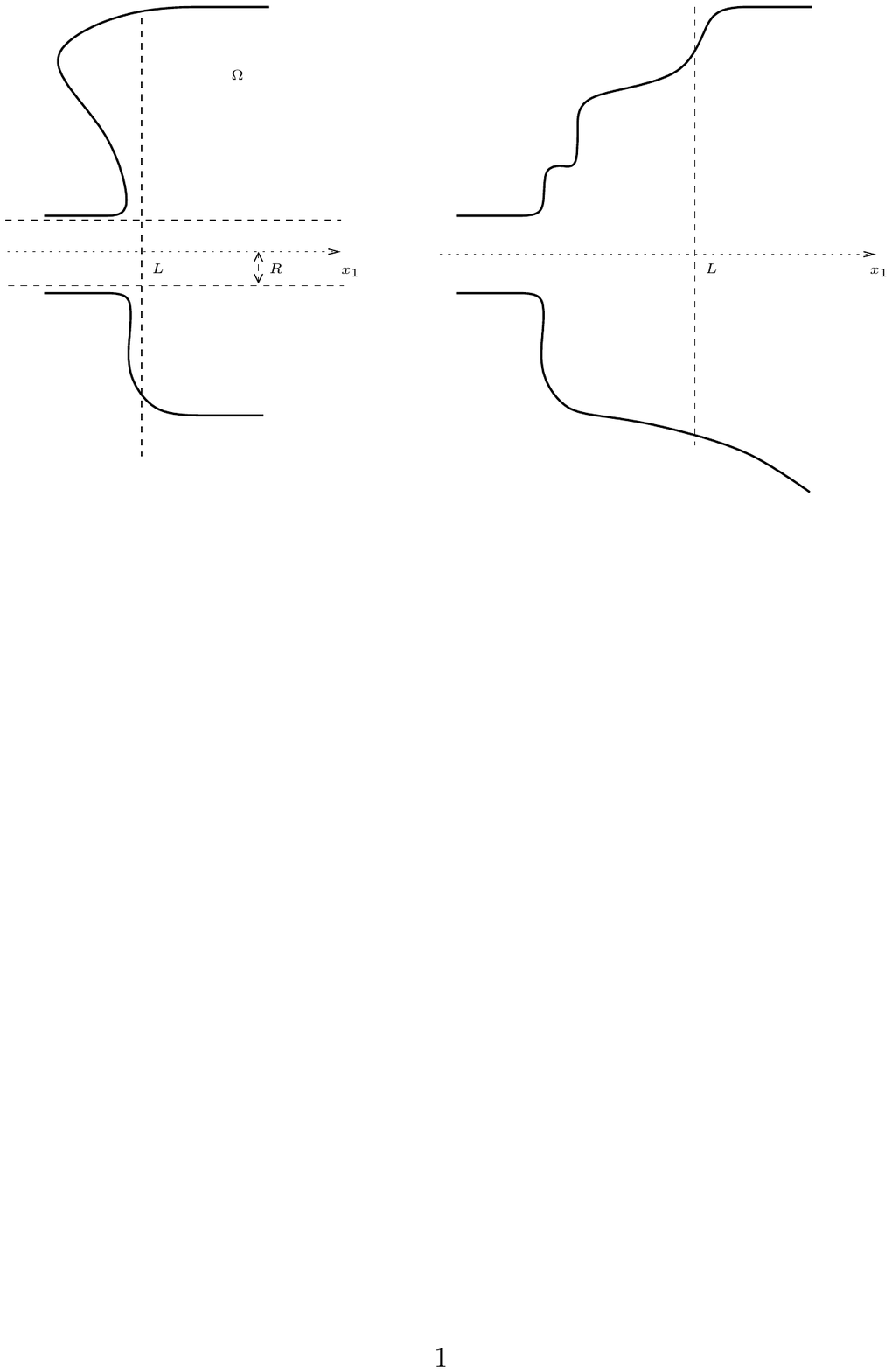}
\caption{\footnotesize{Domains $\Omega$ that satisfy \eqref{Omegawideningcompleteprop}}}\label{domboulefig}
\end{center}
\end{figure}
%  Let us remind that we know from Theorem \ref{existenceThm} that the unique solution of our parabolic problem is increasing in time. This implies, using parabolic estimates, that 
% $$u(t,x)\to u_\infty(x)\text{ as } t\to+\infty, \text{ locally uniformlay in } x\in\Omega$$
% and $u_\infty$ is a solution of 
% \be\label{pbstatdomR}
% \begin{cases}
% -\Delta u_\infty=f(u_\infty) &\text{in } \Omega,\\
% \partial_\nu u_\infty=0 &\text{on } \partial\Omega.
% \end{cases}
% \ee
We will first prove that $u_\infty$ is close to 1 in $$\Omega_{L+R}^r:=\Omega\cap\left\{(x_1,x')\in\R\times\R^{N-1},\: x_1> L+R\right\}$$ using a sliding method.  Then we will extend this estimate to all of $\Omega$ using a backward travelling front as subsolution  and finally we will conclude that $u_\infty \equiv 1$.

%%%%%%%%%%%%%%%%%%%%%%%%%%%%%%%%%%%%%%%%%%%%%
%\subsection{The sliding ball method}

As in section \ref{propgeneraldom}, we denote $z=z_R$ the maximal positive solution of 
\be\label{Dirichlet}
\begin{cases}
 -\Delta z=f(z) &\text{in } B_{R},\\
 z=0 &\text{on } \partial B_{R},
 \end{cases}
\ee
We recall that it satisfies $0<z_R<1$ in $B_R$, and is radially symmetric and decreasing with respect to $|x|$. We extend $z_R$ by 0 outside $B_R$ and denote $z_R(x)= w_R(|x|)$ where $w_R$ is defined on $\R^+$. In particular, we know that $\sup z_R= z_R(0) = w_R(0) >\theta$. We further recall from Lemma~\ref{comppropcyllemma} that for all $x=(x_1,x')\in \Omega$, we have
$u_\infty(x)\geq w_R(|x'|)$.

By a result of \cite{BL} we know that this maximal solution $w_R$ converges to 1 locally uniformly as $R\to+\infty$. Thus, there exist $R_1>R_0$, where $R_0>0$ is defined in section \ref{propgeneraldom}, and $\delta>0$ such that $w_R(0)-2\delta>\beta$ for all $R>R_1$, with $\beta>\theta$ defined by \eqref{beta} in the Introduction. Let $\eta>0$ be such that $w_R(|x|)>w_R(0)-\delta$ for all $x\in B_\eta$. We will prove that $u_\infty>w_R(0)-\delta$ in $\Omega_{L+R}^r$ using the following definition.
\begin{Def}[Sliding ball assumption]
We say that $x\in \Omega$ verifies the sliding ball assumption if there exists $a\in B_\eta(x)$, $x^0\in\R$ and 
\begin{gather}
 \gamma:[0,1]\to\Omega \text{ continuous, such that } \gamma(0)=(x^0,0,...,0),\quad \gamma(1)=a,\notag\\
%\quad x\in B_{R-\eta}(x_1),\notag \\
\text{and} \quad \forall \:t\in[0,1)\;\; \forall \: b\in \overline{B_R}(\gamma(t))\cap\partial\Omega\quad (b-\gamma(t))\cdot \nu\geq 0 \tag{H2}\label{Hypchemin}
\end{gather}
where $\nu$ is the outward unit normal at $b\in\partial\Omega$. 
\end{Def}
Assumption \eqref{Hypchemin} means that there exists a continuous path $\gamma$ from $a$ a point near $x$ to $(x^0,0,...,0)$ a point of the $x_1$-axis, such that for all $x_t$ on this path $B_R(x_t)\cap\Omega$ is star shaped with respect to the center $x_t$. %One can look at Figure \ref{domboulefig} in section \ref{completeprop} for some illustrations.\\
\begin{Lemma}
For all $x\in \Omega$ that satisfies the sliding ball assumption, we have
$$
u_\infty(x)\geq w_R(0)-\delta.
$$ 
\end{Lemma}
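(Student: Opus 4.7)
The plan is to carry out a sliding ball argument along the continuous path $\gamma$ supplied by assumption \eqref{Hypchemin}. Set $g_t(y) := z_R(y - \gamma(t))$, extended by $0$ outside $B_R(\gamma(t))$, so that $g_t$ is continuous on $\R^N$ and satisfies $-\Delta g_t = f(g_t)$ in $B_R(\gamma(t))$. The aim is to establish $u_\infty \geq g_t$ in $B_R(\gamma(t)) \cap \Omega$ for every $t\in [0,1]$. Once this is known at $t=1$, since $a\in B_\eta(x)$ gives $x\in B_R(a)\cap \Omega$ (because $\eta<R$), the choice of $\eta$ yields $u_\infty(x)\geq w_R(|x-a|)>w_R(0)-\delta$, which is the desired conclusion.

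For the initialisation at $t=0$, I would use that $\gamma(0)=(x^0,0,\dots,0)$ lies on the axis, so $B_R(\gamma(0))\subset \R\times B_R'\subset\Omega$. For $y=(y_1,y')\in B_R(\gamma(0))$, the elementary inequality $|y'|\leq|y-\gamma(0)|$, combined with the radial monotonicity of $w_R$ (extended by $0$ beyond $R$) and Lemma \ref{comppropcyllemma}, gives $u_\infty(y)\geq w_R(|y'|)\geq w_R(|y-\gamma(0)|)=g_0(y)$.

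For the sliding itself, set $T:=\sup\{t\in[0,1] : u_\infty\geq g_s \text{ in } B_R(\gamma(s))\cap\Omega \text{ for all } s\leq t\}$ and argue by contradiction that $T=1$. If $T<1$, a compactness argument (the balls $B_R(\gamma(t))$ all lie in a fixed compact set since $\gamma([0,1])$ is compact) produces $y^*\in\overline{B_R(\gamma(T))\cap\Omega}$ with $u_\infty(y^*)=g_T(y^*)$. Three cases arise. The point $y^*$ cannot lie on $\partial B_R(\gamma(T))$: there $g_T(y^*)=0$, which would contradict $u_\infty>0$ on $\overline{\Omega}$ (a fact inherited from $u>0$ and $u_t>0$ via the monotone convergence in Theorem \ref{existenceThm}). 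If $y^*$ is interior to $B_R(\gamma(T))\cap\Omega$, the strong maximum principle applied to $w:=u_\infty-g_T$, which satisfies a linear equation $-\Delta w-c(y)w=0$ upon writing $f(u_\infty)-f(g_T)=c(y)w$ with bounded $c$, forces $w\equiv 0$ in the connected component of $y^*$ and reduces the analysis to a boundary point on $\partial B_R(\gamma(T))$ (already excluded) or on $\partial\Omega$.

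The delicate remaining case, $y^*\in\partial\Omega\cap\overline{B_R(\gamma(T))}$, is where I expect the main obstacle, and it is dealt with by Hopf's Lemma. Since $\partial_\nu u_\infty=0$ by the Neumann boundary condition, a sign contradiction will follow provided that $\partial_\nu g_T(y^*)\leq 0$ (which forces $\partial_\nu w(y^*)\geq 0$, against the strict negativity prescribed by Hopf). This is precisely what \eqref{Hypchemin} is designed to deliver: the radial chain rule gives
\[
\partial_\nu g_T(b)=\frac{w_R'(|b-\gamma(T)|)}{|b-\gamma(T)|}\,(b-\gamma(T))\cdot\nu(b)
\]
for $b\in\partial\Omega\cap B_R(\gamma(T))$, in which $w_R'\leq 0$ by the radial monotonicity of $z_R$ and $(b-\gamma(T))\cdot\nu(b)\geq 0$ by \eqref{Hypchemin}. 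Ruling out all three cases yields $T=1$ and completes the argument.
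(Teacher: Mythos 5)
Your proposal is correct and follows essentially the same route as the paper's own proof: slide the extended maximal Dirichlet solution $z_R$ along the path $\gamma$, define the critical sliding parameter as a supremum, and exclude a touching point via the strong maximum principle in the interior and Hopf's Lemma combined with the star-shapedness condition \eqref{Hypchemin} on $\partial\Omega$. Your write-up is in fact slightly more explicit than the paper's (initialisation at $t=0$ via Lemma \ref{comppropcyllemma}, the exclusion of $\partial B_R(\gamma(T))$ using $u_\infty>0$, and the sign bookkeeping in the normal derivative), but the argument is the same.
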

\begin{proof}
The proof is the same as the one of Lemma \ref{comppropcyllemma} with the novelty here that we slide the maximal solution $w_R$ along the path between the $x_1$-axis and $a$ and compare it to the solution $u_\infty$.

So we consider $x\in \Omega$ that satisfies the sliding ball assumption and 
an associated path $\gamma$ from $(x^0,0)$ on the $x_1$-axis to $a\in B_\eta (x)$ satisfying the properties in the definition. Let $w_R^{\bar{x}}$ be the maximal solution of the Dirichlet boundary value problem \eqref{Dirichlet} in $B_R(\bar{x})$. We know from  Lemma \ref{comppropcyllemma} that 
$w_R^{(x0,0)}\leq u_\infty,$ so we can define 
$$
t^\star=\sup\{t\in[0,1], \; \forall \tilde t\in [0,t] \; w_R^{\gamma(t)}\leq u_\infty \}.
$$ 
We will prove by contradiction that $t^\star=1$. Otherwise we define $v=u_\infty-w_R^{\gamma(t^\star)}$ and once again we have
\begin{gather*}
-\Delta v=c(x)v \text{ in }\Omega \text{ with } c \text{ bounded}, \quad v\geq 0, \quad v\not \equiv 0, \\
\text{there exists } \hat x \in \overline{B_{R}(\gamma(t^\star))} \text{ such that } v(\hat x)=0.
\end{gather*}
By the strong maximum principle, it is impossible that $\hat x\in \overline{B_{R}(\gamma(t^\star))}\cap \Omega$ so $\hat x \in B_{R}(\gamma(t^\star))\cap \partial \Omega$ and from Hopf's Lemma, we have $ \partial_\nu v(\hat x)<0$. But
$$
\partial_\nu v(\hat x)= -\partial_\nu \big(w_R(|\hat x -\gamma(t^\star)|)\big)=|w_R'(|\hat x -\gamma(t^\star)|)|\frac{\hat x -\gamma(t^\star)}{|\hat x -\gamma(t^\star)|}\cdot\nu
$$
since $w_R$ is decreasing. But this contradicts the definition of the sliding ball assumption. 

We thus get that $u_\infty\geq w_R^a$ and since $x\in B_\eta(a)$, we have $u_\infty(x)\geq w_R(0)-\delta$ by definition of $\eta$ and $\delta$.
\end{proof}
We claim that if $x=(x_1,x')\in \Omega$ verifies $x_1\geq L+R$, then 
$$u_\infty(x)\geq w_R(0)-\delta.$$
Indeed let us choose $x\in \Omega$ with $x_1\geq L+R$. Thanks to the previous lemma, it is sufficient to prove that $x$ satisfies the sliding ball assumption. Since $\Omega_L^r$ is convex, as assumed in \eqref{Omegawideningcompleteprop}, we define $a=x$, $x_0=x_1$ and $\gamma(t)=(x_1,tx')\in\Omega_L^r$ for all $t\in[0,1]$.
Assume by contradiction that there exist $t\in [0,1)$ and $b \in \overline{B_R}(\gamma(t))\cap\partial\Omega$ such that $(b-\gamma(t))\cdot \nu< 0$. Then $b$ and $\gamma(t)\in \overline{\Omega_L^r}$ which is convex so for any $\epsilon\in [0,1]$, $(1-\epsilon) b+\epsilon \gamma(t)=b- \epsilon (b-\gamma(t))\in  \overline{\Omega_L^r}$ but this contradicts the definition of the outward unit normal, for $\epsilon$ small.
\begin{Rk}
Note that $u_\infty>w_R(0)-\delta>\theta$ in $\Omega^r_{L+R}$ implies that $u_\infty(x)\to1$ as $x_1\to+\infty$.
\end{Rk}
%%%%%%%%%%%%%%%%%%%%%%%%%%%%%%%%%%%%%%%%%%%%%%
%\subsection{Comparison with a particular subsolution}

Now we will prove that $u_\infty(x)\geq w_R(0)-2\delta>\theta$ for all $x\in\Omega$ and thus $u_\infty\equiv 1$ in $\Omega$ following the same proof than in the previous section. This ends the proof of Theorem \ref{increaseThmprop}.

Let us define $f_\delta\in C^{1,1}([0,w_R(0)-2\delta])$ as follows:
\be\label{fdelta}
f_\delta(s)=\begin{cases} f(s) &\forall s\in[0,\beta],\\
		0 & \text{for } s=w_R(0)-2\delta,
		\end{cases}\\
\ee
with $0<f_\delta\leq f$ in $(\theta,w_R(0)-2\delta)$. And consider $(\phi_\delta,c_\delta)$ the travelling wave solution between 0 and $w_R(0)-2\delta$, invading 0 from the right, i.e $(\phi_\delta,c_\delta)$ is solution of 
\be\label{TWdelta}\begin{cases}
-\phi_\delta''-c_\delta\phi_\delta'=f_\delta(\phi_\delta)\quad\text{in } \R,\\
\phi_\delta(-\infty)=0,\quad \phi_\delta(+\infty)=w_R(0)-2\delta, \quad c_\delta<0.
\end{cases}\ee
We have $u_\infty(x)\to 1$ as $x_1\to -\infty$ so we can fix $-M<0$ such that $u_\infty(x)\geq w_R(0)-\delta$ on $\Omega_{-M}^l:=\{x\in \Omega, \; x_1<-M\}$. Now since $u_\infty\geq w_R(0)-\delta>w_R(0)-2\delta\geq \phi_\delta$ also on $\{x\in \Omega, \; x_1>L+R\}$, we have $u_\infty\geq \epsilon$ on $\Omega$ for $\epsilon$ sufficiently small and thus we can translate $\phi_\delta$ on the right such that,
$$\forall x\in\Omega,\quad u_\infty(x)\geq\phi_\delta(x_1).$$

We can now apply a parabolic comparison principle to $u_\infty$ and $v(t,x)=\phi_\delta(x_1-ct)$ on $\R_+\times \{x\in \Omega, \; -M<x_1<L+R\}$. Indeed $f$ is locally Lipschitz and
\begin{equation*}\begin{cases}
\partial_tu_\infty-\Delta u_\infty=f(u_\infty)   &\text{ on } \R_+^*\times\left\{(x_1,x')\in\Omega,\: -M<x_1< L+R\right\},\\ \partial_tv_\delta-\Delta v_\delta\leq f(v_\delta) &\text{ on } \R_+^*\times\left\{(x_1,x')\in\Omega,\: -M<x_1< L+R\right\},\\
\partial_\nu u_\infty(t,x) =0 \geq \partial_\nu v(t,x) &\text{ for } t>0 \text{ and } x\in \partial \Omega \text{ with } -M<x_1< L+R,\\
u_\infty(t,x)\geq v(t,x), &\text{ for } t\geq0\text{ and } x\in \Omega  \text{ with }x_1= L+R \text{ or } x_1=-M,\\
u_\infty(0,x)\geq v(0,x), &\text{ for }  x\in \Omega \text{ with } -M< x_1< L+R.
\end{cases}\end{equation*}
This yields $u_\infty(x)\geq \phi( x_1-c_\delta t)$ for all $t\geq 0$ and $x\in \Omega$ with $-M\leq x_1\leq L+R$ and passing to the limit for $t\to \infty$, we obtain $u_\infty(x)\geq w_R(0)-2  \delta$ on $\{x\in \Omega, \; -M\leq x_1\leq L+R\}$ and thus on all $\Omega$.
% with 
% \begin{equation*}
% c(t,x)=\begin{cases}\frac{f(v)-f(w)}{v-w}, &\text{for }v(t,x)\neq w(t,x),\\
% 				f'(0), &\text{otherwise}
% 	\end{cases}
% \end{equation*}
% $c\in L^\infty$. Applying the weak maximum principle we know that 
% $$\forall t\geq0,\: x\in\Omega\cap\left\{(x_1,x')\in\R\times\R^{N-1},\: x_1\leq L+R\right\},\quad h(t,x)\leq0.$$
% This implies that $u(t,x)\geq\phi_\delta(x_1-c_\delta (t-T))$ for all $x\in \Omega$, $t>T$ and letting $t\to+\infty$ we get
% $$\forall x\in\Omega,\quad u_\infty(x)\geq 1-2\delta>\theta.$$

Finally we conclude that $u_\infty\equiv 1$ as in the previous section.

\begin{Rk}
This proof can be adapted for more general domains than the ones considered in Theorem \ref{increaseThmprop}. The clue is to prove that the sliding ball assumption can be verified for any point of the domain far to the right.
\end{Rk}
\section*{Acknowledgments}
The research leading to these results has received funding from the European Research Council
under the European Union's Seventh Framework Programme (FP/2007-2013) / ERC Grant
Agreement n. 321186 - ReaDi - ``Reaction-Diffusion Equations, Propagation and Modelling'' held by Henri Berestycki. 
This work was also partially supported by the French National Research Agency (ANR), within the project NONLOCAL ANR-14-CE25-0013. Henri Berestycki was also partly supported by an NSF FRG grant DMS-1065979. Part of this work was carried out while he was visiting the Department of Mathematics at the University of Chicago. Juliette Bouhours was supported by a PhD fellowship "Bourse hors DIM" of the "R\'egion Ile de France". 

\bibliography{bibliothese}
\bibliographystyle{plain}

\end{document}